\theoremstyle{plain}
\newtheorem{theorem}{Theorem}
\newtheorem{proposition}[theorem]{Proposition}
\newtheorem{lemma}[theorem]{Lemma}
\newtheorem{coro}[theorem]{Corollary}
\newtheorem{property}[theorem]{Property}
\theoremstyle{definition}
\newtheorem{definition}[theorem]{Definition}
\theoremstyle{remark}
\newtheorem{remark}[theorem]{Remark}
\newtheorem{example}[theorem]{Example}
\newcommand{\keywords}[1]{
  \begin{flushleft}
    \textbf{Keywords:} #1
  \end{flushleft}
  \vspace{1em}
}
\DeclareFontFamily{U}{mathx}{\hyphenchar\font45}
\DeclareFontShape{U}{mathx}{m}{n}{<-> mathx10}{}
\DeclareSymbolFont{mathx}{U}{mathx}{m}{n}
\DeclareMathAccent{\widebar}{0}{mathx}{"73}
\newcommand{\mc}[1]{\mathcal{#1}}
\newcommand{\eg}{e.g.\!\,}
\newcommand{\ie}{i.e.\!\,}
\newcommand{\reals}{\mathbb{R}}
\newcommand{\rowof}[2]{#1_{#2,:}}
\newcommand{\colof}[2]{#1_{:,#2}}
\DeclareMathOperator*{\trace}{tr}
\newcommand{\rk}[1][X]{\mathrm{rank}\left(#1\right)}
\DeclareMathOperator{\expm}{Exp}
\newcommand{\rkval}{k}
\newcommand{\rstar}{r^{\star}}
\newcommand{\mstar}{M^{\star}}  
\newcommand{\xstar}{X^{\star}} 
\newcommand{\mrkk}[1][\rkval]{\mathcal{M}_{#1}^{m\times n}} 
\newcommand{\trs}[1][V]{{#1}^{\mathrm{T}}}
\newcommand{\rpd}{\mathrm{RPD}}
\newcommand{\svd}{\mathrm{SVD}}
\newcommand{\dop}{\mathrm{D}}
\newcommand{\ddtt}{\frac{\mathrm{d}^2}{\mathrm{d}t^2}}
\newcommand{\stie}{\mathrm{St}} 
\DeclareMathOperator*{\grad}{\mathrm{grad}}
\DeclareMathOperator*{\hess}{\mathrm{Hess}\!}
\newcommand{\mleqk}[1][\rkval]{\mathcal{M}_{\leq#1}} 
\newcommand{\mtotal}[1][\rkval]{\widebar{\mathcal{M}}_{#1}^{m,n}} 
\newcommand{\glin}[1][\rkval]{\mathrm{GL}\left(#1\right)} 
\newcommand{\vecsp}[1][m\times n]{\reals^{#1}}
\newcommand{\prodspstar}[1][\rkval]{\reals^{m\times#1}_*\times\reals^{n\times#1}_*} 
\newcommand{\prodsp}[1][\rkval]{\reals^{m\times#1}\times\reals^{n\times#1}}
\newcommand{\thorsp}[1][x]{\mathcal{H}_{\bar{#1}} }
\newcommand{\tversp}[1][x]{\mathcal{V}_{\bar{#1}} }
\newcommand{\ttansp}[1][\bar{x}]{T_{#1}\mtotal}
\newcommand{\ttanv}[1][\xi]{\widebar{#1}}
\newcommand{\tansp}[1][X]{T_{#1}}
\newcommand{\frobb}[1][X]{\|#1\|_{\mathrm{F}}^2}
\newcommand{\frob}[1][X]{\|#1\|_\mathrm{F}}
\newcommand{\fro}[1]{\|#1\|_\mathrm{F}}
\newcommand{\rinfnorm}[1][X]{\|#1\|_{2,\infty}}
\newcommand{\nop}[1]{\|#1\|_{\mathrm{op}}}
\newcommand{\tfpartg}[1][\bar{x}]{\partial_{G}\bar{f}\left(#1\right)}
\newcommand{\tfparth}[1][\bar{x}]{\partial_{H}\bar{f}\left(#1\right)}
\newcommand{\rgrad}[1][X]{\mathrm{grad}f\left(#1\right)}
\newcommandx{\trgrad}[2][1=x,2=]{\mathrm{grad} \bar{f}\left(\bar{#1}_{#2}\right)}
\newcommandx{\tmet}[2][1=x,2=]{\bar{g}_{\bar{#1}_{#2}}}
\newcommandx{\rmet}[2][1=X,2=]{g_{#1_{#2}}}
\newcommand{\ttanvl}[1][\xi]{\bar{#1}^{(1)}}
\newcommand{\ttanvr}[1][\xi]{(\bar{#1}^{(2)})^{\mathrm{T}}}
\DeclareMathOperator{\proj}{P}
\DeclareMathOperator{\id}{I}
\newcommandx{\matsp}[2][1=m,2=n]{\reals^{#1\times #2}} 
\newcommand{\tvec}[1][X]{\mathrm{vec}\left(#1\right)}
\newcommand{\iset}[1]{[\![#1]\!]}
\newcommand{\usvsp}{\stie(m,\rkval)\times \mathcal{S}_{++}(\rkval)\times \stie(n,\rkval)}
\newcommand{\uysp}{\stie(m,\rkval)\times\reals_*^{n\times \rkval}}
\newcommand{\bstar}{B^{\star}}
\newcommand{\brip}{\beta}
\DeclareMathOperator{\opa}{\mathcal{A}}
\newcommand{\duop}[1][A]{{#1}^{*}}
\newcommand{\po}[1][\Omega]{P_{#1}}
\newcommand{\pom}[1][\Omega]{P_{#1}}
\newcommand{\pu}[1][U]{{P}_{#1}} 
\newcommand{\opvx}[1][X]{\mathcal{G}_{#1}}
\newcommand{\ssz}{\theta} 
\newcommand{\dttx}[1][]{\Delta_{#1}}  
\newcommand{\algseq}{\{\bar{x}_{t}\}_{t\geq 0}}
\newcommand{\indseq}{\{X_{t}\}_{t\geq 0}}
\newcommand{\sigmax}{\sigma_{\max}^{\star}}
\newcommand{\sigmin}{\sigma_{\rkval}^{\star}}
\newcommand{\dnm}{\delta} %
\newcommand{\nmb}[1][\dnm]{\mathcal{B}^{\star}(#1)}  %
\newcommand{\muu}{\mu_{f}} %
\newcommand{\cbd}{(1-\brip)\frac{\sigmin}{\lip}} 
\newcommand{\rlip}{L_g}
\newcommand{\lip}{L_f}
\newcommand{\cca}{{\bar{C}_{1}}} 
\newcommand{\tcca}{{\bar{C}_{2}}} 
\newcommand{\ccb}{{\bar{C}_{3}}} 
\newcommand{\ccc}{{\bar{C}}} 
\newcommand{\nmg}[1][\mu]{\mathcal{B}^{\star}_{\delta_0}(#1)}
\newcommand{\brk}{\bar{r}_{\rkval}}
\newcommand{\crinf}{\epsilon_{\theta}}
\newcommand{\rev}[1]{#1}
\newcommand{\revv}[1]{#1}
\newcommand{\revj}[1]{#1}
\newcommand{\bleu}[1]{{#1}}
\newcommand{\redl}[1]{{#1}}
\newcommand{\checkk}[1]{}
\title{On the Analysis of Optimization with Fixed-Rank Matrices: a Quotient Geometric View
} 
\author{Shuyu Dong\thanks{LISN, INRIA, Universit\'e Paris-Saclay, Gif-sur-Yvette, France. (shuyu.dong@inria.fr)} \qquad 
Bin Gao\thanks{LSEC, AMSS, Chinese Academy of Sciences. (gaobin@lsec.cc.ac.cn)}
\qquad Wen Huang\thanks{School of Mathematical Sciences, Xiamen University, Fujian, China. (wen.huang@xmu.edu.cn)}
\qquad Kyle A. Gallivan\thanks{Department of Mathematics, Florida State University, Tallahassee, USA. (kgallivan@fsu.edu)} 
}
\date{} 
\begin{document}
\maketitle

\begin{abstract} 
We study a type of Riemannian gradient descent (RGD) algorithm,
designed through Riemannian preconditioning, for optimization on $\mrkk$---the set of $m\times n$ real matrices with a fixed rank $k$. 
Our analysis is based on a quotient geometric view of $\mrkk$: 
by identifying this set with the quotient manifold of a two-term product space $\prodspstar$ of matrices with full column rank via matrix factorization,
we find an explicit form for the update rule of the RGD algorithm, which leads to a novel approach to analysing their convergence behavior in rank-constrained optimization. 
We then deduce some interesting properties that reflect how RGD distinguishes from other matrix factorization algorithms such as those based on the Euclidean geometry. 
In particular, we show that the RGD algorithm is not only faster than
Euclidean gradient descent but also does not rely on balancing techniques to
ensure its efficiency while the latter does. 
We further show that this RGD algorithm
is guaranteed to solve matrix sensing and matrix completion problems with
linear convergence rate under %
the restricted positive definiteness property. %
Numerical experiments on matrix sensing and completion are provided to demonstrate these properties.  
\end{abstract}

\keywords{
fixed-rank matrices, Riemannian optimization, quotient manifold, gradient descent methods, nonconvex optimization, matrix recovery 
}

\section{Introduction}\label{sec:intro}

Optimization with low-rank matrices is a fundamental problem that arises in signal processing, machine learning and computer vision. %
In principal component analysis, matrix recovery and data clustering for example, the most meaningful information in the data is structured, such that it can be captured by a matrix with an intrinsically low rank~\cite{udell2019big}. 
Therefore, low-rank matrix models enjoy the advantage of having a low complexity without compromising the accuracy or representativity. 
One approach to optimizing low-rank matrix models is by using the matrix nuclear norm~\cite{Candes2008b,recht2010guaranteed,Candes2010}, which is a convex relaxation of the matrix rank. %
Another approach is to represent an $m\times n$ matrix $X$ through low-rank matrix factorization such as $X=A\trs[B]$, where $A$ and $B$ are thin factor matrices of size $m\times \rkval$ and $n\times\rkval$ respectively. 
Although the matrix factorization approach induces a nonconvex optimization problem, it presents several advantages over the previous convex relaxation approach due to much lower cost in memory and computation. 
Algorithms for low-rank matrix factorization can be regrouped into two main types, alternating minimization~\cite{zhou2008large,jain2013low,Hardt2014a} and gradient descent algorithms~\cite{Keshavan2009,keshavan2010matrix,park2018finding}. 
Recent advances in matrix recovery problems such as compressed sensing and matrix completion~\cite{Sun2016a,ge2016matrix,tu2016low,Zheng2016} %
shed light on the absence of spurious local minima in %
these problems under mild conditions, and thus they explain formally the success of (Euclidean) gradient descent algorithms for matrix recovery, despite the nonconvexity of matrix factorization. 

Riemannian algorithms, in a similar spirit as Euclidean gradient descent but exploiting non-Euclidean geometries on low-rank matrix space, have %
shown performances superior to algorithms using the Euclidean geometry. 
As an important example of low-rank matrix spaces, the set $\mrkk$ of fixed-rank matrices %
\begin{equation}\label{def:mk}
\mrkk=\left\{X\in\vecsp: \rk[X] = k\right\}
\end{equation}
is a smooth Riemannian manifold of dimension $(m+n-k)k$ and can be characterized through the factorization of rank-$k$ matrices (\eg,~\cite{Vandereycken2013b}). %
\Cref{tab:mf} shows some basic information of different matrix factorization approaches in the related work, alongside the convex relaxation approach. %
For example, Vandereycken~\cite{Vandereycken2013b} used the rank-$\rkval$ singular value decomposition ($\svd$) to identify a matrix $X\in\mrkk$ with the point $(U,\Sigma,V)$, where $U,\Sigma$ and $V$ are the factor matrices of the rank-$\rkval$ $\svd$ of $X$, and considered $\mrkk$ as an embedded Riemannian submanifold of $\reals^{m\times n}$ with the Euclidean metric. 
Wei et~al.~\cite{wei2016guarantees,tanner2016low} proposed
several variants of the iterative hard thresholding (IHT) %
algorithm, also based on the embedded Riemannian manifold structure of $\mrkk$ for matrix recovery problems with the fixed-rank constraint, and provided exact-recovery guarantees of the algorithm for compressed sensing and matrix completion~\cite{wei2020guarantees} using the restricted isometry property
(RIP). %

\begin{table}[!htbp]
\centering 
\caption{Different matrix factorization approaches alongside convex relaxation of the matrix rank.}
\label{tab:mf}
\begin{tabular}{l|cc|c}
\hline
                        & Search space   & \# parameters   & Reference                                                           \\ \hline
Conv. relax.            & $\vecsp$       & $mn$            & \cite{Candes2008b,recht2010guaranteed,Candes2010}                   \\ 
$X=U\Sigma\trs[V]$      & $\usvsp$       & $(m+n+1)\rkval$ & \cite{Vandereycken2013b,Mishra2014}                                  \\ 
$X=U\trs[Y]$            & $\uysp$        & $(m+n)\rkval$   & \cite{Boumal}                                                         \\ 
$X=G\trs[H]$            & $\prodspstar$ ($\prodsp$) & $(m+n)\rkval$   & ours,~\cite{meyer2011linear,mishra2012,wei2016guarantees,tanner2016low}  (\cite{tong2021accelerating})             \\ 
\hline
\end{tabular}
\end{table}

Apart from the embedded manifold view, it is known that $\mrkk$~\eqref{def:mk} can be understood as a quotient space: consider a product space $\mtotal:=\prodspstar$ of matrices with full column rank $\rkval$, the projection 
\[\pi: \prodspstar\to\mrkk: (G,H)\mapsto G\trs[H]\]
induces the following equivalence relation $\sim$: 
\[(G,H) \sim (G',H') \quad\text{if and only if}\quad G\trs[H] = G'\trs[H'].\] %
Since the equivalent classes of $\sim$ are the fibers of $\pi$ and $\mrkk\subset\vecsp$ is the image of $\pi$, the mapping $\pi$ induces an one-to-one correspondence between $\mrkk$ and $\prodspstar/\sim$, hence the quotient structure of $\mrkk$; see \Cref{fig:quot-pic}. 
\begin{figure}[!h]
  \centering
\begin{tikzpicture}
   \node (E) at (1,0) {\hspace{-5.6em}$\mtotal=\mathbb{R}_*^{m\times k}\times \mathbb{R}_*^{n\times k}$};
   \node[below=of E] (M) {$\mathbb{R}_*^{m\times k}\times \mathbb{R}_*^{n\times k}/\sim$};
   \node[right=of M] (N) {$\mrkk$};
   \draw[<->] (M)--(N) node [midway,below] {~};
   \draw[->] (E)--(M) node [midway,left] {$\Pi$};
   \draw[->] (E)--(N) node [midway,above] {$\pi$};
   \node (F) at (6.5,0) {$\mtotal$};
   \node[below=of F] (G) {$\mrkk$};
   \node[right=of G] (H) {$\reals$};
   \draw[->] (G)--(H) node [midway,below] {$f$};
   \draw[->] (F)--(G) node [midway,left] {$\pi$};
   \draw[->] (F)--(H) node [midway,right] {~$\bar{f}=f\circ\pi$};
  \end{tikzpicture}
 \caption{Relations between the product space $\prodspstar$ and the quotient space $\mrkk$.\label{fig:quot-pic}}
\end{figure} %
Subsequently, the optimization of a real-valued function $f$ on $\mrkk$ can be seen as the following matrix factorization optimization: 
\begin{equation}\label{prog:main-generic}
    \min_{(G,H)\in\prodspstar} \bar{f}(G,H):= f\circ \pi(G,H). 
\end{equation} 
In view of this quotient strucutre, Mishra et al.~\cite{mishra2012} proposed a
Riemannian gradient descent (RGD) algorithm using two-term matrix factorization
and a preconditioning technique~\cite{mishra2016riemannian} specially adapted to the squares loss function. 
Tong et al.~\cite{tong2021accelerating} analyzed the convergence property of
the {\it preconditioned RGD algorithm} of \cite{mishra2012} on the product space $\prodsp$ (under the name
of ScaledGD) by leveraging a distance on  $\prodsp$ that is specifically
invariant on fixed-rank manifolds.  
Boumal et al.~\cite{Boumal} used the factorization $X=UY$, where $U\in\reals^{m\times \rkval}_*$ is considered as a point of the Grassmannian manifold (a quotient space of the Stiefel manifold $\stie(m,\rkval)$), and proposed a variable projection method using a Riemannian trust-region algorithm for matrix completion with fixed-rank matrices. %
We refer to \Cref{tab:mf} and~\cite{Mishra2014,absil2014two} for a thorough overview. %
More recently, Huang et al.~\cite{huang2018blind} proposed an efficient Riemannian gradient descent algorithm for blind deconvolution based on the quotient manifold structure of the set of rank-$1$ (complex-valued) matrices with guaranteed convergence using the RIP; 
Luo et al.~\cite{luo2021recursive} proposed a new sketching algorithm on the set of fixed-rank matrices and proved that the algorithm enjoys high-order convergence for low-rank matrix trace regression and phase retrieval under similar conditions. %

In this paper, we are interested in how Riemannian algorithms behave in solving problem~\eqref{prog:main-generic}, and how their behavior is related to the landscape of the original problem $\min_{X\in\mrkk} f(X)$. %
To answer this question, we start with two noticeable difficulties underlying~\eqref{prog:main-generic}: 
one difficulty is the nonconvexity of~\eqref{prog:main-generic}, which is inherited from the nonconvex space $\mrkk$ via the mapping $\pi$. %
Another difficulty is that $\pi$ being not injective, there is no unique---and in fact an infinite number
of---matrix representations $(G,H)$ in $\prodspstar$ for each $X\in\mrkk$, and the
performances of algorithms such as Euclidean gradient descent generally vary according to the actual locations of
the iterates (such as the initial point) inside their respective equivalence classes on $\prodspstar$. %
The non-uniqueness of matrix representations also leads to the difficulty that the minima of the
problem~\eqref{prog:main-generic} are degenerate~\cite{absil2014two}. %
\begin{figure}[htpb]
\centering
\includegraphics[width=0.40\textwidth]{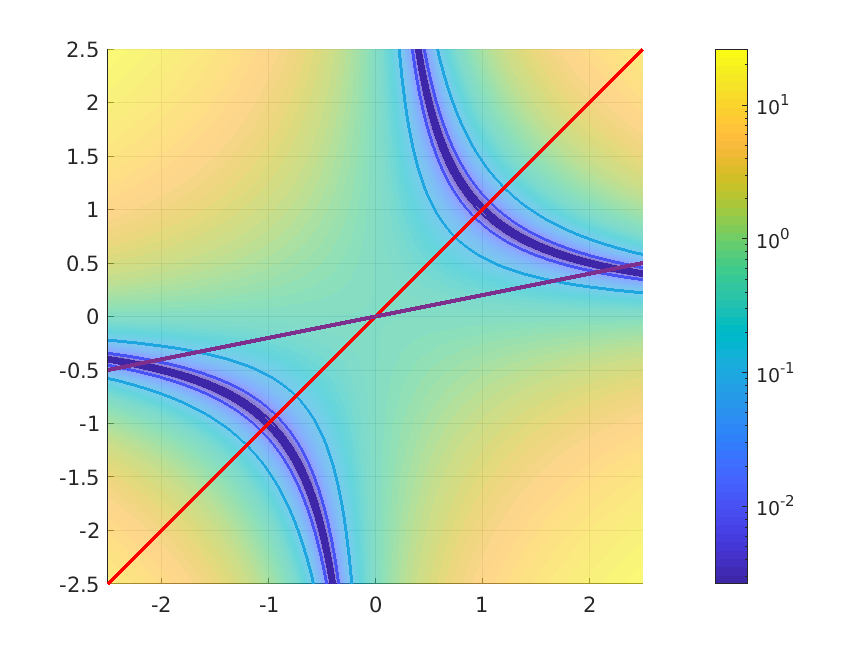}\qquad
\includegraphics[width=0.40\textwidth]{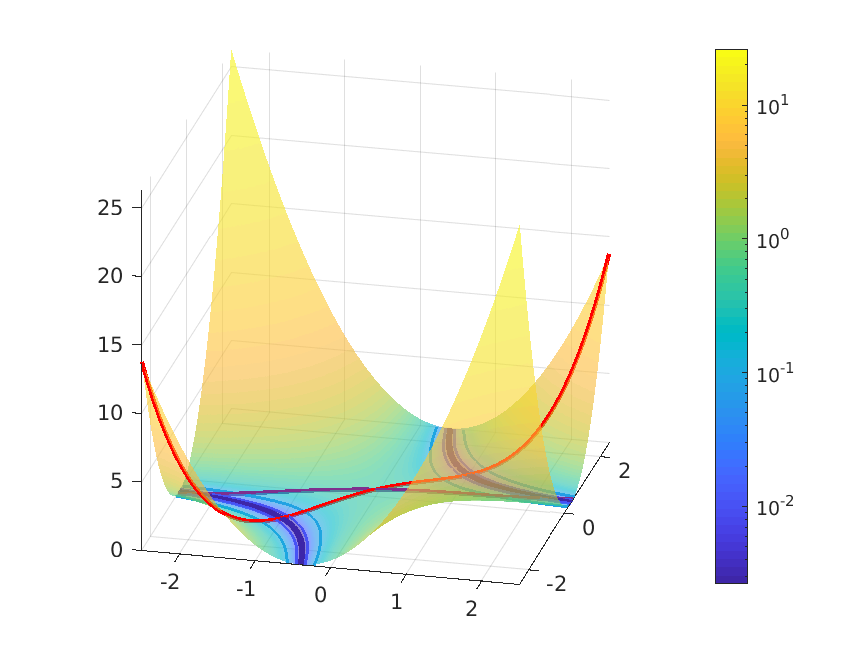}
\caption{Landscape of the cost function of the simplest matrix factorization, on the 2D plane.}
\label{fig:lands-mf}
\end{figure}
These difficulties can be visualized in the landscape of even the simplest matrix factorization problem: \Cref{fig:lands-mf} shows the graph of $\bar{f}:\reals_*\times\reals_*\to\reals: {(x,y)}\mapsto \frac{1}{2}({xy}-A)^2$, for $A=1$, along with two different paths (in red and purple) passing through the equivalence
classes on $\reals_*\times\reals_*$. %
Concretely, we investigate the properties of the RGD algorithm of~\cite{mishra2012} through the lens of quotient geometry of $\mrkk$. 
We present new results leading to the confirmation that this algorithm bypasses the above-mentioned difficulties of~\eqref{prog:main-generic}. %
Our results about the RGD algorithm~\cite{mishra2012} also show substantial differences between algorithms designed with an explicit metric on the quotient manifold $\mrkk$ and algorithms that are not based on the quotient geometry of $\mrkk$, such as the Riemannian algorithms using metric projection~\cite{Vandereycken2013b,wei2016guarantees,tanner2016low,wei2020guarantees} and algorithms based on the Euclidean geometry~\cite{Sun2016a}. %
More precisely, the main contributions are as follows. 

\subsection{Contributions} 
First, we prove that the RGD algorithm of~\cite{mishra2012} (on $\prodspstar$) induces a sequence on $\mrkk$ that admits an explicit update rule, and that
the sequence is invariant to changes of its iterate $(G,H)\in\prodspstar$ in the equivalence classes. 
Second, we analyse the convergence behavior of the algorithm under the {\it restricted positive definiteness}\/ (RPD) property~\cite{10.1093/imanum/drz061} in solving the following low-rank matrix optimization problems: %
\begin{example}[Compressed sensing]\label{exa:lrqp-ms}
   Let $\mstar$ be an $m\times n$ matrix and $b^{\star}= \Phi(\mstar)$ a vector observed through a matrix sensing operator,
   $\Phi:\vecsp\to\reals^{d}$.
   Recover $\mstar$ by minimizing 
       $f(X) := \frac{1}{2}\|\Phi(X)-b^{\star}\|_{2}^2$ 
   with a low-rank matrix~$X$. 
\end{example}
\begin{example}[Matrix completion]\label{exa:opt-mk-via-mtotal}
    Let $\mstar\in\vecsp$ be a matrix observed only on an index set $\Omega\subset \iset{m}\times \iset{n}$. 
Complete $\mstar$ by minimizing $f(X):=\frac{1}{2}\fro{\po(X-\mstar)}^2$ with a low-rank matrix~$X$. 
\end{example}
Through the RPD property of the objective function $f$ around the 
true hidden matrix $\mstar$, we demonstrate the existence of a region of attraction on $\mrkk$ %
in which the landscape of $f$ in the ambient space $\vecsp$ is preserved. 
Then, we prove results about the local convergence rate of the quotient
geometry-based RGD algorithm %
for the minimization of $f$ on $\mrkk$. 
To the best of our knowledge, this is the first convergence rate analysis 
of the RGD algorithm of~\cite{mishra2012} using quotient geometry of the fixed-rank matrix manifold. 

We illustrate the main results through numerical experiments: we show that the quotient
manifold-based algorithms not only enjoy the benefits of low-rank
matrix factorization but also present desirable {\it invariance}\/ properties
that the Euclidean gradient descent does not possess. In particular, 
its convergence behavior %
does not vary with changes in the {\it balancing}\/ between the factor
matrices, 
while the performance of Euclidean gradient descent can be easily
deteriorated due to unbalanced factor matrices.

In summary: (i) we show formally that the Riemannian gradient descent algorithm under the aforementioned quotient geometric setting enjoys desirable invariance properties. %
A new result (\Cref{lemm:rgd-mtotal2manf}) about this quotient manifold-based
RGD algorithm is given; (ii) for a class of low-rank matrix optimization
problems, we provide new results about the geometric properties of the RGD
algorithm around critical points (\Cref{lemm:f-rsc-riem}) under the restricted
positive definiteness property, about sufficient descent conditions for
RGD (\Cref{lemm:4.8z} and \Cref{coro:4.8z}), and about the local linear convergence of RGD (\Cref{lemm:f-rsc-fort}) on $\mrkk$. 
\subsection{Preliminaries}
\label{subsec:notation-prelims}
Given the integers $m,n\geq 1$ and $k\leq\min(m,n)$, 
\begin{equation}\label{def:mtotal}
    \mtotal=\prodspstar 
\end{equation}
denotes the product space of $m\times k$ and $n\times k$ real matrices with full column-rank. %
The set of $m\times k$ orthonormal real matrices, \ie, the Stiefel manifold, is
denoted by $\stie(m,k)$. The set of $k\times k$ positive definite matrices is
denoted by $\mathcal{S}_{++}(k)$. 
 
A point in $\mtotal$~\eqref{def:mtotal} is denote by $\bar{x}=(G_{\bar{x}},H_{\bar{x}})$, $(G,H)$ or simply $\bar{x}$ indifferently. By default, the symbols $G$ and $H$ signify the %
$m\times\rkval$ and $n\times\rkval$ matrices of $\bar{x}$ respectively; they also constitute a pair of left and right factor matrices of $X=G\trs[H]$. 
For any $\bar{x}\in\mtotal$, the tangent space to $\mtotal$ at $\bar{x}$ is $\tansp[\bar{x}]\mtotal=\prodsp$. A tangent vector $\ttanv\in\tansp[\bar{x}]\mtotal$ is denoted as $\ttanv=(\ttanvl,\bar{\xi}^{(2)})$. 
For any $Y\in\reals^{m\times k}_*$, the Euclidean metric on the tangent space $\tansp[Y]\reals^{m\times k}_*\simeq\reals^{m\times k}$ is defined and denoted by $\braket{V,W}:=\trace(\trs[V]W)$, for $V,W\in\tansp[Y]\reals^{m\times k}_*$, %
where $\braket{\cdot,\cdot}$ is also called the Frobenius inner product. %

The index set $\{1,\dots, n\}$ is denoted as $\iset{n}$. For any $X\in\mrkk$, the $k$-th singular value of $X$, \ie, the minimal non-zero singular value, is denoted as $\sigmin(X)$. 
For brevity, we denote the maximal and the minimal non-zero singular values of a given constant matrix $\mstar$ by $\sigmax$ and $\sigmin$, respectively.
The row vector from the $i$-th row of a matrix $X$ is denoted as $\rowof{X}{i}$ and the $2$-norm of the matrix row as $\|\rowof{X}{i}\|_2$. The maximal row norm of a matrix $X$ with $m$ rows is denoted and defined as $\rinfnorm[X]:=\max_{1\leq i\leq m}\|\rowof{X}{i}\|_2$. %
The spectral norm of a symmetric positive semidefinite matrix $A$ and the operator norm of the linear operator $A: X\mapsto AX$ are denoted by $\|A\|_{2}$. We denote the adjoint of a linear operator $A$ by $A^*$. 

\paragraph{The RPD property.}

Due to the rank constraint of~\eqref{prog:main-mk}, the optimization of $f$~\eqref{eq:def-f-lrqp} on $\mrkk$ is nonconvex. However, some properties of the quadratic function $f$ are preserved %
on rank-constrained matrix spaces. %
The following restricted positive definiteness (RPD) property~\cite{10.1093/imanum/drz061} characterizes the well-conditionedness of $f$ on the set of rank-constrained matrices. %

\begin{definition}[{RPD property~\cite[Definition 3.1]{10.1093/imanum/drz061}}]
    \label{def:lrqp-rip}
    For an integer $r\geq 1$ and a parameter $0\leq\brip_r <1$, the operator $\opa$ of $f$~\eqref{eq:def-f-lrqp} satisfies a \emph{$(\brip_r,r)$-$\rpd$ property} if 
\begin{equation}
    \label{eq:def-rip}
    (1-\brip_r)\frob[Z]^2 \leq \braket{Z,\opa(Z)} \leq (1+\brip_r)\frob[Z]^2,
\end{equation}
for any $Z\in\mleqk[r]$. The smallest $\brip_r\geq 0$ for the $(\brip_r,r)$-$\rpd$ property to hold is called the \emph{$\rpd$ constant} of $\opa$ on $\mleqk[r]$. 
\end{definition}

The $\rpd$ property is equivalent to the restricted isometry property (RIP) condition~\cite{candes2005decoding,candes2006stable} in the literature of compressed sensing. This property can be satisfied with overwhelmingly high probability for a large family of random measurement matrices, for example, the normalized
Gaussian and Bernoulli matrices~\cite{5730578,recht2010guaranteed,wei2016guarantees}. 
For matrix completion, the $\rpd$ property also holds under reasonable assumptions on the hidden true matrix $\mstar$ and the sampling distribution~\cite{candes2009exact}.

\subsection{Organization} 
The rest of this paper is organized as follows. 
In Section~\ref{sec:opt-on-mk}, we present algorithms based on a Riemannian preconditioning technique on $\mtotal$ and then the quotient manifold structure of
$\mrkk$ related to these algorithms. %
In Sections~\ref{sec-ropt-mk:lrqp}, we propose the main results about the RGD algorithm of Section~\ref{sec:opt-on-mk}. In Section~\ref{ssec-ropt-mk:lrqp-intro}, we present a new convergence analysis of the RGD with performance guarantees for matrix recovery problems on $\mrkk$. %
Numerical experiments and results are presented in Section~\ref{sec-mtns20:exp}. 
We conclude the paper in Section~\ref{sec-throptmk:conclusion}.

\section{Algorithms on $\mtotal$ and quotient geometry of \texorpdfstring{$\mrkk$}{Mk}}
\label{sec:opt-on-mk}

In this section, we revisit two Riemannian gradient-based algorithms, which are designed for solving~\eqref{prog:main-generic} through Riemannian preconditioning. The preconditioning technique induces a Riemannian metric on $\mtotal$, which turns out to be invariant on the equivalence classes of the matrix product mapping $\pi$ in the total space $\mtotal$~\eqref{def:mtotal}. This invariance property is essential to the quotient structure of $\mrkk$ and the properties of the RGD algorithm, as we will present in Section~\ref{ssec:geom-manf-as-quosp} and later in Section~\ref{sec-ropt-mk:lrqp}. 

\subsection{Gradient descent on~\texorpdfstring{$\mtotal$}{the total space} by Riemannian preconditioning}\label{ssec:opt-manf-via-mtotal}

First, we describe a non-Euclidean metric on the product space $\mtotal$~\eqref{def:mtotal} used in a Riemannian algorithm for low-rank matrix completion~\cite{mishra2012}. 

\begin{definition}[\cite{mishra2012}]\label{def:cropt-mk-metric-precon}
Given $\bar{x}:=(G,H)\in\mtotal$, let $\bar{g}_{\bar{x}}:\tansp[\bar{x}]\mtotal\times\tansp[\bar{x}]\mtotal$ denote an inner product defined as follows, 
\begin{equation}\label{eq-throptmk:def-metric-precon}
    \bar{g}_{\bar{x}}(\ttanv,\ttanv[\eta]) =
    \trace(\bar{\xi}^{(1)T}\bar{\eta}^{(1)} (\trs[H]H)) + \trace(\ttanvr
    \bar{\eta}^{(2)} (\trs[G]G)),
\end{equation}
for $\ttanv,\ttanv[\eta]\in\tansp[\bar{x}]\mtotal$. %
\end{definition}
It can be shown that $\bar{g}_{\bar{x}}(\cdot,\cdot)$ is a Riemannian metric
since it is symmetric and positive-definite at any $\bar{x}\in\mtotal$ and it is a smooth-varying bilinear form on $\mtotal$. 
By definition, %
the Riemannian gradient of a function $\bar{f}$ at $\bar{x}\in\mtotal$ is the unique vector, denoted as $\trgrad\in\ttansp$, such that $\tmet[x](\bar{\xi},\trgrad) = \dop\bar{f}(\bar{x})[\ttanv]$, $\forall\ttanv\in\ttansp$. 
Therefore, given the metric~\eqref{eq-throptmk:def-metric-precon}, the Riemannian gradient of $\bar{f}$ has the following form,
\begin{equation}\label{eq-throptmk:rgrad-man-precon}
    \trgrad = \big(\tfpartg[\bar{x}]{{(\trs[H] H)}^{-1}},
    \tfparth[\bar{x}]{{(\trs[G] G)}^{-1}}\big),
\end{equation}
where $\tfpartg[\bar{x}]$ and $\tfparth[\bar{x}]$ are the two partial differentials of $\bar{f}$.

The metric $\bar{g}$ defined in~\eqref{eq-throptmk:def-metric-precon} was
proposed by Mishra et al.~\cite{mishra2012} for matrix completion (\Cref{exa:opt-mk-via-mtotal}) using fixed-rank matrices, and can be seen as a metric deduced from the Riemannian preconditioning technique~\cite{mishra2016riemannian} in the context of two-term matrix factorization with a squares-loss function $\bar{f}(G,H) = \frac{1}{2} \| G\trs[H] - M^\star \|_F^2$. 
Indeed, %
one can see that the Riemannian gradient~\eqref{eq-throptmk:rgrad-man-precon} %
is in fact the solution to the secant equation in $\ttanv$: $\mathcal{H}(\ttanv) = (\partial_G\bar{f}(\bar{x}), \partial_H\bar{f}(\bar{x}))$, 
where $\mathcal{H}:\tansp[\bar{x}]\mtotal\to\tansp[\bar{x}]\mtotal$ is %
defined as 
\[
\mathcal{H}(\ttanv):= \begin{pmatrix} \partial_{G}^{2}\bar{f}(G,H) & 0 \\
                               0 &   \partial_{H}^{2}\bar{f}(G,H) 
             \end{pmatrix}(\ttanv) = (\ttanvl \trs[H] H,  \ttanv^{(2)} \trs[G] G).\] %
Note that $\mathcal{H}(\ttanv)\approx \dop^2 \bar{f}(G,H)[\ttanv]$. Therefore, the Riemannian gradient~\eqref{eq-throptmk:rgrad-man-precon} is an approximation of the Newton direction of $\bar{f}$ at $(G,H)$. 
The metric $\bar{g}$~\eqref{eq-throptmk:def-metric-precon} is also referred to as the {\it preconditioned}\/ metric on $\prodspstar$. 

\paragraph{Riemannian gradient descent (RGD)}\label{ssec:alg-details} %

For a matrix factorization problem in the form of~\eqref{prog:main-generic}, we
present the RGD as in \cref{algo-throptmk:generic-rgd}. The search direction is set to be the negative Riemannian gradient~\eqref{eq-throptmk:rgrad-man-precon}. 
The operation needed for the gradient descent update step (line~\ref{algl-throptmk:rgd-update}) on $\mtotal$ is chosen
to be the identity map, which is a valid retraction operator on the $\mtotal$. 
One can find the use of the identity map as a retraction in a total space:
$\mtotal$~\eqref{def:mtotal} and $\mathbb{C}^{K}_{*} \times \mathbb{C}^{N}_{*}$, respectively, in~\cite{mishra2012} and~\cite{huang2018blind}. %
The stepsize $\ssz_t$ in each iteration
is obtained following a backtracking line search procedure with respect to the
line search condition~\eqref{eq-throptmk:ls-armijo}. In this backtracking procedure,
the initial trial stepsize $\ssz_t^0$ (line~\ref{algline:qprecon-rgd-stepsize}) is
important to the time efficiency of algorithm. We consider the following methods for
setting the initial trial stepsize, including notably %
(i) exact line minimization (\eg,~\cite{Vandereycken2013b}), 
and (ii) Riemannian Barzilai--Borwein (RBB) stepsize rules~\cite{iannazzo2018riemannian}, which 
have been shown to be very efficient.
The following two RBB rules are considered,  
\begin{equation}\label{eq-throptmk:stepsize_bb}
    {\ssz_t^{\mathrm{BB}1}} := \frac{\tmet[x][t](\bar{z}_{t-1},\bar{z}_{t-1})}{|
    \tmet[x][t](\bar{z}_{t-1},\bar{y}_{t-1})|}, \quad\quad
{\ssz_t^{\mathrm{BB}2}}  :=\frac{|  \tmet[x][t](\bar{z}_{t-1},\bar{y}_{t-1}
)|}{\tmet[x][t](\bar{y}_{t-1},\bar{y}_{t-1})},
\end{equation}
where $\bar{z}_{t-1} = \bar{x}_t - \bar{x}_{t-1}$ and $\bar{y}_{t-1} = \trgrad -
\trgrad[x][t-1]$.

\begin{algorithm}[htbp]
\caption{Riemannian Gradient Descent (RGD) using the preconditioned metric}\label{algo-throptmk:generic-rgd}
\begin{algorithmic}[1]
\REQUIRE{Initial point $\bar{x}_0\in\mtotal$, parameters $\beta, \sigma\in (0,1)$, $\ssz_0>0$ and $\epsilon > 0$; $t=0$.}
\ENSURE{$\bar{x}_t\in\mtotal$.} 
\WHILE{$\|\trgrad[x][t]\|>\epsilon$}
\STATE{Set $\ttanv[\eta]_t =-\trgrad[x][t]$ using~\eqref{eq-throptmk:rgrad-man-precon}.} 
\STATE{Backtracking line search: %
find the smallest integer $\ell\geq 0$ such that, for $\ssz_t:= \ssz_{0}\beta^{\ell}$, 
\begin{equation}\label{eq-throptmk:ls-armijo}
    \bar{f}(\bar{x}_t) - \bar{f}\left(\bar{x}_t + \ssz_t \ttanv[\eta]_t\right) \geq \sigma \ssz_t \bar{g}_{\bar{x}_{t}}(-\trgrad[x][t],\ttanv[\eta]_t).
\end{equation}
}\label{algline:qprecon-rgd-stepsize}%
\vspace*{-1.4em}
\STATE{Update: $\bar{x}_{t+1} = \bar{x}_t + \ssz_t \ttanv[\eta]_t$; $t\leftarrow
t+1$.}\label{algl-throptmk:rgd-update}
\ENDWHILE{}
\end{algorithmic}
\end{algorithm}

\paragraph{Riemannian conjugate gradient descent (RCG)}
\label{ssec:alg-rcg}
Based on the same computational elements as for Algorithm~\ref{algo-throptmk:generic-rgd}, we also consider a Riemannian conjugate gradient (Qprecon~RCG) algorithm on the total space $\mtotal$. With the same Riemannian gradient definition, the search direction $\ttanv[\eta]$ of the RCG algorithm at the $t$-th iteration is defined as 
$\ttanv[\eta]_{t} = -\trgrad[x][t] + \beta_t\ttanv[\eta]_{t-1}$, %
where $\beta_t$ is computed using a Riemannian version of one of the nonlinear conjugate gradient rules. 
In the numerical experiments, we choose the Riemannian version of the following modified Hestenes-Stiefel rule~\cite{Hestenes&Stiefel:1952} (HS+), 
$\beta = \max\big(0, \frac{\tmet[x][t](\ttanv_t-{\ttanv}_{t-1}, 
{\ttanv}_t)}{\tmet[x][t](\ttanv_t-\ttanv_{t-1},\ttanv[\eta]_{t-1})}\big)$, 
where $\ttanv_t:=\trgrad[x][t]$ for $t\geq 0$.

\subsection{Geometry of \texorpdfstring{$\mrkk$}{Mk} as the quotient of \texorpdfstring{$\mtotal$}{the product space}}\label{ssec:geom-manf-as-quosp}

As briefly mentioned in Section~\ref{sec:intro}, the projection $\pi:\mtotal\to\mrkk$: $(G,H) \mapsto G\trs[H]$ induces the equivalence relation $\sim$ on $\mtotal$. 
The structure of the fibers of $\pi$ can be characterized by the linear group $\glin[\rkval]$: %
in fact, the operations $(G,H)\mapsto (GF^{-1}, H\trs[F])$ for $F\in\glin[\rkval]$ correspond to all possible transformations that leave the matrix product $X=G\trs[H]$ unchanged. This means that %
$X=\pi((G,H))$ is on an one-to-one correspondence with %
$\{ (GF^{-1},H\trs[F]): F\in\glin[\rkval] \}$, 
which defines the equivalence class $\{(G',H')\in\mtotal: G'\trs[H'] = G\trs[H]\}$. %
Hence the identification $\mrkk\simeq\mtotal/\glin[\rkval]$. Moreover, the quotient space $\mrkk$ is a quotient submanifold of $\mtotal$~\cite[\S3.4]{AbsMahSep2008}. %
The product space $\mtotal$ is referred to as the \textit{total space}. 

The structure of the tangent space to $\mrkk$ can be deduced as follows~\cite[\S3]{AbsMahSep2008}. 
Given a matrix $X\in\mrkk$ and a tangent vector $\xi\in\tansp[X]\mrkk$, the mapping $\pi:\mtotal\to\mrkk$ induces infinitely many representations of $\xi$: 
for $\bar{x}\in\pi^{-1}(X)$, any element $\ttanv\in\ttansp$ that satisfies $\dop\pi(\bar{x})[\ttanv] = \xi$ can be considered as a representation of $\xi$. Indeed, for any smooth function $f:\mrkk\to\reals$, the function $\bar{f}:=f\circ\pi:\mtotal\to\reals$, one has the following identification,
\[
\dop\bar{f}(\bar{x})[\ttanv] = \dop f(\pi(\bar{x}))[\dop\pi(\bar{x})[\ttanv]] =
\dop f(X)[\xi].
\]
Since $\pi:\mtotal\to\mrkk$ is surjective, the kernel of $\dop\pi(\bar{x}):\tansp[\bar{x}]\mtotal\to\tansp[X]\mrkk$ is non-trivial, the matrix representation of $\xi$ in $\ttansp$ is not unique. 
Nevertheless, one can find a unique representation of $\xi$ in a subspace of $\ttansp$. This is realized by decomposing the tangent
space $\ttansp\simeq\prodsp$ into two \revv{complementary} subspaces, \ie, $\ttansp := \tversp
\oplus\thorsp$, where $\tversp:= \tansp[\bar{x}](\pi^{-1}(X))$, called the {\it vertical space}, is the tangent space at $\bar{x}$ to the equivalence class $[\bar{x}]$, 
and $\thorsp$, called the {\it horizontal space}, is the complementary of $\tversp$ in $\ttansp$. 
One can see that a tangent vector $\ttanv\in\tversp$ satisfies
$\dop\pi(\bar{x})[\ttanv] =0$. %

Consequently, for any $X\in\mrkk$ and $\xi\in\tansp$, there is a unique representation
$\ttanv\in\thorsp\subset\ttansp$ of $\xi$ such that
$\dop\pi(\bar{x})[\ttanv] = \xi$. 
The tangent vector $\ttanv\in\thorsp$ is called the horizontal lift of $\xi$. 

Given the horizontal lifts as the matrix representation of tangent vectors to the quotient manifold $\mrkk$, %
any metric $\bar{g}$ on the total space that satisfies following invariance property induces a metric on $\mrkk$. 
\begin{definition}[{\cite[\S3]{absil2014two}}]\label{def-throptmk:invariant-metric}
For $X\in\mrkk$ and 
$\bar{x}\in\pi^{-1}(X)$, let $\ttanv, \ttanv[\eta]\in\ttansp$ denote the horizontal lifts of $\xi$ and $\eta$ respectively, 
a metric $\bar{g}$ in the total space is said to be \emph{invariant} along $\pi^{-1}(X)$ if 
$\tmet[x](\ttanv,\ttanv[\eta]) = \bar{g}_{\bar{y}}(\ttanv_{\bar{y}},\ttanv[\eta]_{\bar{y}})$ 
for any point $\bar{y}\sim\bar{x}$ in $\pi^{-1}(X)$, where $\ttanv_{\bar{y}}, \ttanv[\eta]_{\bar{y}}$ denote the horizontal lifts of
$\xi$ and $\eta$ at $\bar{y}$ respectively. 
\end{definition}
Indeed, given a metric $\bar{g}$ that satisfies the invariance property in Definition~\ref{def-throptmk:invariant-metric}, 
the inner product  
$g_{X}:\tansp\mrkk\times\tansp\mrkk\to\reals$ such that $g_{X}(\xi,\eta)=\bar{g}_{\bar{x}}(\ttanv_{\bar{x}},\ttanv[\eta]_{\bar{x}})$ is a metric on the quotient manifold $\mrkk$. 
The following proposition shows that the metric~\eqref{eq-throptmk:def-metric-precon} induces a metric on $\mrkk$. 
\begin{proposition}\label{prop:invariance-metric-precon}
For any matrix $X\in\mrkk$, the preconditioned metric~\eqref{eq-throptmk:def-metric-precon} satisfies the invariance property as in Definition~\ref{def-throptmk:invariant-metric}, that is, for \revv{two horizontal lifts} $\bar{x}\in\mtotal$ and $\ttanv,\ttanv[\eta]\in\ttansp$, 
$\bar{g}_{\bar{x}}(\ttanv,\ttanv[\eta]) = \bar{g}_{\bar{x}'}(\ttanv[\xi'],\ttanv[\eta'])$, 
for any $\bar{x}'\sim\bar{x}$, where $\ttanv[\xi'],\ttanv[\eta']\in\ttansp[\bar{x}']$ are the horizontal lifts 
at $\bar{x}'$ 
such that $\dop\pi(\bar{x'})[\ttanv[\xi']]=\dop\pi(\bar{x})[\ttanv]$ and $\dop\pi(\bar{x}')[\ttanv[\eta']]=\dop\pi(\bar{x})[\ttanv[\eta]]$.
\end{proposition}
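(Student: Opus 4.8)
The plan is to exploit the description of the fibers of $\pi$: for $\bar{x} = (G,H)$, every $\bar{x}'\sim\bar{x}$ has the form $\bar{x}' = (GF^{-1}, H\trs[F])$ for some $F\in\glin[\rkval]$. I would package this by introducing the map $\phi_F:\mtotal\to\mtotal$, $\phi_F(G,H):=(GF^{-1}, H\trs[F])$; it is a diffeomorphism with $\pi\circ\phi_F = \pi$, and since it is linear in each matrix block its differential acts on $\ttanv = (\bar{\xi}^{(1)},\bar{\xi}^{(2)})\in\ttansp$ by $\dop\phi_F(\bar{x})[\ttanv] = (\bar{\xi}^{(1)}F^{-1}, \bar{\xi}^{(2)}\trs[F])$. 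The key claim, from which the proposition follows at once, is that $\dop\phi_F(\bar{x})$ is a linear isometry from $(\ttansp, \bar{g}_{\bar{x}})$ onto $(\ttansp[\bar{x}'], \bar{g}_{\bar{x}'})$.

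First I would establish this isometry property by substituting into~\eqref{eq-throptmk:def-metric-precon}. With $\bar{x}' = (GF^{-1}, H\trs[F])$, the first summand of $\bar{g}_{\bar{x}'}(\dop\phi_F(\bar{x})[\ttanv], \dop\phi_F(\bar{x})[\ttanv[\eta]])$ equals $\trace(F^{-\mathrm{T}}\bar{\xi}^{(1)T}\bar{\eta}^{(1)}F^{-1}\,F\trs[H]H\trs[F])$; by cyclicity of the trace the conjugating factors cancel in pairs ($F^{-1}F$ internally and $\trs[F]F^{-\mathrm{T}}$ after a cyclic shift), leaving $\trace(\bar{\xi}^{(1)T}\bar{\eta}^{(1)}\trs[H]H)$, and the second summand collapses to $\trace(\ttanvr\bar{\eta}^{(2)}\trs[G]G)$ in exactly the same way. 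Hence $\bar{g}_{\bar{x}'}(\dop\phi_F(\bar{x})[\ttanv], \dop\phi_F(\bar{x})[\ttanv[\eta]]) = \bar{g}_{\bar{x}}(\ttanv,\ttanv[\eta])$ for all tangent vectors at $\bar{x}$.

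Next I would check that $\dop\phi_F(\bar{x})$ carries the horizontal lift at $\bar{x}$ to the horizontal lift at $\bar{x}'$. Differentiating $\pi\circ\phi_F = \pi$ gives $\dop\pi(\bar{x}')\circ\dop\phi_F(\bar{x}) = \dop\pi(\bar{x})$, so the bijection $\dop\phi_F(\bar{x})$ maps $\tversp = \ker\dop\pi(\bar{x})$ onto $\mathcal{V}_{\bar{x}'} = \ker\dop\pi(\bar{x}')$; being a $\bar{g}$-isometry, it then also maps the $\bar{g}_{\bar{x}}$-orthogonal complement $\thorsp$ onto the $\bar{g}_{\bar{x}'}$-orthogonal complement $\mathcal{H}_{\bar{x}'}$. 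Therefore, if $\ttanv\in\thorsp$ is the horizontal lift of $\xi$ at $\bar{x}$, then $\dop\phi_F(\bar{x})[\ttanv]\in\mathcal{H}_{\bar{x}'}$ and $\dop\pi(\bar{x}')[\dop\phi_F(\bar{x})[\ttanv]] = \dop\pi(\bar{x})[\ttanv] = \xi$; uniqueness of the horizontal lift forces $\ttanv[\xi'] = \dop\phi_F(\bar{x})[\ttanv]$, and likewise $\ttanv[\eta'] = \dop\phi_F(\bar{x})[\ttanv[\eta]]$. Combining with the isometry identity yields $\bar{g}_{\bar{x}'}(\ttanv[\xi'],\ttanv[\eta']) = \bar{g}_{\bar{x}}(\ttanv,\ttanv[\eta])$, which is precisely the invariance asserted.

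The arithmetic here is light, so the step that deserves care is the passage from ``$\phi_F$ is an isometry that preserves fibers'' to ``$\dop\phi_F(\bar{x})$ maps horizontal lift to horizontal lift'': the horizontal space is defined only implicitly as a $\bar{g}$-orthogonal complement, so identifying $\mathcal{H}_{\bar{x}'}$ with the image of $\thorsp$ requires invoking both fiber preservation (for the vertical spaces) and the isometry property (to pass to complements), followed by uniqueness of the lift. An alternative would be to derive a closed form for $\thorsp$ and substitute directly, but the isometry argument above is cleaner and self-contained given \Cref{def-throptmk:invariant-metric}.
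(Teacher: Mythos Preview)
Your proof is correct and follows essentially the same route as the paper: parameterize the fiber through $\bar{x}$ by the $\glin[\rkval]$-action, write down the corresponding transformation of tangent vectors, and verify by direct substitution into~\eqref{eq-throptmk:def-metric-precon} that the metric value is unchanged. The paper carries out the identical trace computation (with the parameterization $\bar{x}'=(G\trs[F],HF^{-1})$ rather than your $(GF^{-1},H\trs[F])$, which is immaterial).

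The one point where you go beyond the paper is worth noting: the paper simply asserts that the condition $\dop\pi(\bar{x}')[\ttanv[\xi']]=\dop\pi(\bar{x})[\ttanv]$ ``yields'' the transformed components, as if this equation determined $\ttanv[\xi']$ uniquely---but of course it does not, since $\dop\pi(\bar{x}')$ has a nontrivial kernel. Your argument closes this gap cleanly: you first establish that $\dop\phi_F(\bar{x})$ is a $\bar{g}$-isometry, then use fiber preservation to conclude it carries $\tversp$ onto $\mathcal{V}_{\bar{x}'}$ and hence (by isometry) $\thorsp$ onto $\mathcal{H}_{\bar{x}'}$, so that the transformed vector is indeed the horizontal lift. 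This is the right way to justify the step the paper takes for granted.
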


\section{Main results about RGD on \texorpdfstring{$\mrkk$}{Mk}}\label{sec-ropt-mk:lrqp}

In this section, we investigate how the matrix factorization-based RGD algorithm
(\cref{algo-throptmk:generic-rgd})---designed for solving the reformulated problem~\eqref{prog:main-generic}---behaves in relation with the landscape
of the original problem 
\begin{equation}
    \label{prog:main-mk}
    \min_{X\in\mrkk} ~ f(X).
\end{equation}
The matrix factorization approach~\eqref{prog:main-generic} converts the rank constraint of~\eqref{prog:main-mk} on $\vecsp$ into an unconstrained problem on $\prodspstar$, %
which allows for efficient algorithms with a significant reduction in both memory (from $O(mn)$ to $O((m+n)\rkval)$) and computation time costs. %

However, some difficulties persist with the reformulation~\eqref{prog:main-generic}. 
Note that the {\it lifted} objective function $\bar{f}$ is invariant along the
equivalence classes in $\mtotal$, \ie, $f(\bar{x})=f(\bar{x}')$, for any
$\bar{x}'\sim\bar{x}$. %
This poses issues to algorithms for~\eqref{prog:main-generic}. For example, due to the invariance of $\bar{f}$ in equivalence classes, any (isolated) local minimum of $f$ on $\mrkk$ corresponds to a whole equivalence class in $\mtotal$, which contains an infinite number of nondegenerate minima. This makes the landscape of $\bar{f}$ on $\mtotal$ different and actually more complicated than~\eqref{prog:main-mk}, even under the smallest dimensions (see~\Cref{fig:lands-mf}). %
Interestingly, we will deduce concrete results in Section~\ref{ssec:rgrad-mk}--\ref{ssec:algseq-mtotal2mquo} to show that the RGD algorithm
(\cref{algo-throptmk:generic-rgd}) bypasses the above difficulties, using the
invariance property (\cref{prop:invariance-metric-precon}) of the preconditioned metric.

\subsection{Riemannian gradient on the quotient manifold}\label{ssec:rgrad-mk}

Under the preconditioned metric $\bar{g}$~\eqref{eq-throptmk:def-metric-precon}, the gradient vector field of $\bar{f}$ induces a Riemannian gradient vector field of $f$ in the tangent bundle of $\mrkk$. 
Indeed, from \cref{prop:invariance-metric-precon}, the metric $\bar{g}$~\eqref{eq-throptmk:def-metric-precon} is invariant
along the equivalence classes and therefore induces a metric $g$ in $\mrkk$
such that, for any $X\in\mrkk$ and $\xi,\eta\in\tansp[X]\mrkk$, 
\begin{equation}\label{eq:id-met-total-quot}
g_{X}(\eta,\xi) = g_{\bar{x}}(\ttanv[\eta],\ttanv),
\end{equation}
where $\bar{x}$ is an element in $\pi^{-1}(X)$ and $\ttanv[\eta]$ and $\ttanv$
are the horizontal lifts (at $\bar{x}$) of $\eta$ and $\xi$ respectively. 
\bleu{It follows that, 
for any $\bar{x}\in\pi^{-1}(X)$, $\ttanv[\eta]:=\trgrad$ and 
any $\ttanv\in\ttansp$: 
\begin{align*}
g_{\pi(\bar{x})}(\xi, \rgrad[\pi(\bar{x})]) = 
\bar{g}_{\bar{x}}(\ttanv[\eta],\ttanv) = \dop\bar{f}(\bar{x})[\ttanv] 
= \dop(f\circ\pi)(\bar{x})[\ttanv] = \dop f(\pi(\bar{x}))[\dop\pi(\ttanv)]. 
\end{align*}
Hence the horizontal component of
$\bar{\eta}=\trgrad$ is the horizontal lift of $\rgrad$ at $\bar{x}\in\pi^{-1}(X)$. 
Note that $\trgrad$ belongs to the horizontal space $\thorsp$: since $\bar{f}$ is invariant along the equivalence classes, it is constant on $\pi^{-1}(X)$, which entails that for any $\ttanv\in\tversp=\tansp[\bar{x}](\pi^{-1}(X))$, $\dop\bar{f}(\bar{x})[\ttanv]=\bar{g}_{\bar{x}}(\ttanv,\trgrad)= 0$. 
Therefore, $\trgrad$ is the horizontal lift of $\rgrad$ at $\bar{x}$. 

As a consequence, all horizontal lifts $\trgrad$ for different
$\bar{x}\in\pi^{-1}(X)$ are equivalent. 
This means the induced Riemannian gradient $\rgrad$ is invariant to the location of $\bar{x}$ in the equivalence
class $\pi^{-1}(X)$; which can be reflected by an explicit form of $\rgrad$
independent of $\bar{x}$ as follows. 
} 

\begin{proposition}\label{prop:rgrad-total2quot}
Given a matrix $X\in\mrkk$, let $X=U\Sigma \trs[V]$ denote its $\svd$. %
Let $g$ be the metric on $\mrkk$ induced by the preconditioned metric~\eqref{eq-throptmk:def-metric-precon}, the Riemannian gradient of $f$ on the quotient manifold $(\mrkk,g)$ satisfies 
\begin{equation}\label{eq:rgrad-as-pegrad}
 \rgrad = \opvx(\nabla f(X)),%
\end{equation}
where $\nabla f(X)$ is the Euclidean gradient of $f$ in $\reals^{m\times n}$ and $\opvx:\vecsp\to\tansp[X]\mrkk$ is a linear operator such that, for any $Z\in\vecsp$, 
\begin{equation}\label{eq:ov-egrad2rgrad}
 \opvx(Z) = \pu Z + Z\pu[V], %
\end{equation} 
where $\pu:=U\trs[U]$ and $\pu[V]:=V\trs[V]$ are the orthogonal projections onto the column and row subspaces of $X$ respectively. 
\end{proposition}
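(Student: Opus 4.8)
The plan is to compute the horizontal lift $\trgrad$ explicitly using the formula~\eqref{eq-throptmk:rgrad-man-precon} at a conveniently chosen representative $\bar{x}\in\pi^{-1}(X)$, then push it forward through $\dop\pi(\bar{x})$ to obtain $\rgrad$ as an element of $\tansp[X]\mrkk$. Since \Cref{prop:invariance-metric-precon} guarantees the result is independent of the representative, I would pick the \emph{balanced} factorization coming from the SVD: set $G = U\Sigma^{1/2}$ and $H = V\Sigma^{1/2}$, so that $G\trs[H]=U\Sigma\trs[V]=X$ and, crucially, $\trs[G]G = \trs[H]H = \Sigma$. This choice collapses the matrix weights $(\trs[H]H)^{-1}$ and $(\trs[G]G)^{-1}$ in~\eqref{eq-throptmk:rgrad-man-precon} into $\Sigma^{-1}$, which will make the subsequent algebra clean.

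Next I would express the partial differentials of $\bar{f}=f\circ\pi$. By the chain rule, $\dop\bar{f}(\bar{x})[(\Delta_G,\Delta_H)] = \braket{\nabla f(X), \Delta_G\trs[H] + G\trs[\Delta_H]}$, which gives $\partial_G\bar{f}(\bar{x}) = \nabla f(X)\,H$ and $\partial_H\bar{f}(\bar{x}) = \trs[\nabla f(X)]\,G$. Plugging these together with $G=U\Sigma^{1/2}$, $H=V\Sigma^{1/2}$ into~\eqref{eq-throptmk:rgrad-man-precon} yields the horizontal lift
\begin{equation*}
\trgrad = \big(\nabla f(X) V\Sigma^{1/2}\Sigma^{-1},\; \trs[\nabla f(X)] U\Sigma^{1/2}\Sigma^{-1}\big) = \big(\nabla f(X) V\Sigma^{-1/2},\; \trs[\nabla f(X)] U\Sigma^{-1/2}\big).
\end{equation*}
Then I would apply the differential of the product map, $\dop\pi(\bar{x})[(\bar{\xi}^{(1)},\bar{\xi}^{(2)})] = \bar{\xi}^{(1)}\trs[H] + G\,\trs[\bar{\xi}^{(2)}]$. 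Substituting the two components above, with $\trs[H] = \Sigma^{1/2}\trs[V]$ and $G=U\Sigma^{1/2}$, the factors of $\Sigma^{\pm 1/2}$ cancel pairwise and I obtain
\begin{equation*}
\rgrad = \nabla f(X) V\Sigma^{-1/2}\Sigma^{1/2}\trs[V] + U\Sigma^{1/2}\Sigma^{-1/2}\trs[U]\nabla f(X) = \nabla f(X)\pu[V] + \pu\nabla f(X),
\end{equation*}
which is exactly~\eqref{eq:ov-egrad2rgrad} after recognizing $\pu=U\trs[U]$ and $\pu[V]=V\trs[V]$. It remains to note that this expression indeed lies in $\tansp[X]\mrkk$, which follows from the known description of the tangent space to $\mrkk$ at $X=U\Sigma\trs[V]$ (it consists of matrices of the form $\pu Z + Z'\pu[V]$); alternatively, $\rgrad$ being the pushforward of a horizontal vector is automatically tangent, so this is immediate.

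I do not expect a serious obstacle here; the argument is essentially a bookkeeping exercise once the balanced representative is fixed. The one point requiring a little care is the appeal to invariance: I must make sure that $\trgrad$ as given by~\eqref{eq-throptmk:rgrad-man-precon} is genuinely the horizontal lift of $\rgrad$ and not merely some representative — but this was already established in the discussion preceding \Cref{prop:rgrad-total2quot} (namely $\trgrad\in\thorsp$ because $\bar{f}$ is constant on fibers), so I can simply cite it. A secondary subtlety is confirming that $\dop\pi(\bar{x})$ restricted to the horizontal space is the isomorphism onto $\tansp[X]\mrkk$ used implicitly when identifying the pushforward with $\rgrad$; again this is part of the quotient-manifold setup recalled in \Cref{ssec:geom-manf-as-quosp}. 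So the proof is short: fix the balanced SVD-based representative, compute the two partials, substitute into the Riemannian-gradient formula, push forward through $\dop\pi$, and simplify.
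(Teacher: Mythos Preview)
Your proof is correct and follows essentially the same route as the paper: compute $\trgrad$ via~\eqref{eq-throptmk:rgrad-man-precon} and push it forward through $\dop\pi(\bar{x})$. The only cosmetic difference is that you fix the balanced representative $(U\Sigma^{1/2},V\Sigma^{1/2})$ and cancel $\Sigma^{\pm 1/2}$, whereas the paper works with an arbitrary $(G,H)\in\pi^{-1}(X)$ and recognizes $G(\trs[G]G)^{-1}\trs[G]=\pu$ and $H(\trs[H]H)^{-1}\trs[H]=\pu[V]$ directly as orthogonal projectors onto the column and row spaces of $X$, so no appeal to invariance is needed.
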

\begin{remark}\normalfont 
    \Cref{prop:rgrad-total2quot} is a result
    in the thesis~\cite[\S5.2.3]{Dong2021thesis}. In a more recent work~\cite[Remark 9]{luo2021geometric}, the same
    result is given as an example of the geometric
    connections of embedded and quotient geometries in Riemannian fixed-rank matrix optimization. 
\end{remark}
\begin{remark}\label{rmk:ov-egrad2rgrad}
    \normalfont
In the notation of~\eqref{eq:ov-egrad2rgrad}, $\pu$ and $\pu[V]$ are the matrices of the orthogonal projections. 
We also denote by abuse the action of these projection operators as $\pu(Z)=\pu Z$ and $\pu[V](Z)=Z\pu[V]$ respectively. 
The action of $\opvx$~\eqref{eq:ov-egrad2rgrad} can be rewritten as
$\opvx(Z) = \pu Z {(I-\pu[V])} + {(I-\pu)} Z \pu[V] + 2 \pu Z \pu[V]$. 
This implies that the operator $\opvx$ is related to the orthogonal projection 
$\proj_{\tansp[X]\mrkk[k]}$ 
(see~\cite[(2.5)]{Vandereycken2013b}) by 
\begin{align}\label{eq:rmk-rgrad-vs-proj}
\opvx(Z)=\proj_{\tansp[X]\mrkk[k]}(Z) + \pu Z\pu[V],\text{~for all~} Z\in\vecsp.
\end{align}
\end{remark}
The next lemma relates the operator $\opvx$ to  
the orthogonal projection operator $\proj_{\tansp[X]\mrkk}$. 

\begin{lemma}\label{lemm:opvx-props}
Let $X$ be a matrix in $\mrkk$ and let $X:=U\Sigma\trs[V]$ denotes its rank-$\rkval$ $\svd$. The linear operator $\opvx:\vecsp\to\tansp[X]\mrkk$ defined in~\eqref{eq:ov-egrad2rgrad} satisfies the following properties. 

(i): $\opvx$ is symmetric and positive semidefinite. In particular, 
$\opvx(Z)=2Z$ if $Z\in\tansp[X]\mrkk$, and $\opvx(Z)=0$ if $Z\in(\tansp[X]\mrkk)^{\perp}$. 

(ii): Let $X,Y$ be two matrices in $\mrkk$, and 
let $(\opvx-2\id)(Z):=\opvx(Z)-2Z$ denote the evaluation of the operator $(\opvx-2\id)$ at $Z\in\vecsp$. 
It holds that  
	\begin{align}
		(\opvx-2\id)(X-Y) = 2(\id-\proj_{\tansp[X]\mrkk})(Y),\label{eq:opvx-iddiff}
	\end{align}
where $\proj_{\tansp[X]\mrkk}$ is the orthogonal projector onto the tangent
space to $\mrkk$ at $X$. 
\end{lemma}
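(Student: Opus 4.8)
The plan is to prove part (i) by diagonalizing $\opvx$ with respect to the orthogonal decomposition $\vecsp = \tansp[X]\mrkk \oplus (\tansp[X]\mrkk)^{\perp}$, and then derive part (ii) algebraically using the explicit formula \eqref{eq:rmk-rgrad-vs-proj} from Remark \ref{rmk:ov-egrad2rgrad}. For part (i), I would first recall the expanded form of $\opvx$ given in Remark \ref{rmk:ov-egrad2rgrad}, namely $\opvx(Z) = \pu Z (I-\pu[V]) + (I-\pu) Z \pu[V] + 2\pu Z \pu[V]$, and the analogous expansion of the tangent-space projector $\proj_{\tansp[X]\mrkk}$. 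Symmetry of $\opvx$ (with respect to the Frobenius inner product) follows because $\pu$ and $\pu[V]$ are symmetric idempotents: $\braket{\opvx(Z), W} = \braket{Z, \opvx(W)}$ can be checked term by term. For the eigenvalue claims, if $Z \in \tansp[X]\mrkk$ then $\proj_{\tansp[X]\mrkk}(Z) = Z$ and $\pu Z \pu[V] = Z - (I-\pu)Z(I-\pu[V])$; since the last term vanishes for tangent vectors (they have no component in the "doubly orthogonal" block), \eqref{eq:rmk-rgrad-vs-proj} gives $\opvx(Z) = Z + \pu Z \pu[V] = Z + (Z - 0) = 2Z$. Wait — more carefully: a tangent vector has the form $\pu Z' (I-\pu[V]) + (I-\pu) Z' \pu[V] + \pu Z'' \pu[V]$, so its $\pu(\cdot)\pu[V]$ component equals its own $\pu(\cdot)\pu[V]$ component, i.e. $\pu Z \pu[V] = $ the third block of $Z$, and $\proj_{\tansp[X]\mrkk}(Z)+\pu Z\pu[V] = Z + (\text{third block of }Z)$; since the third block of $Z$ equals $\pu Z\pu[V]$ and $\proj$ acts as identity, I instead argue directly: for $Z\in\tansp[X]\mrkk$, $\opvx(Z) = \proj_{\tansp[X]\mrkk}(Z) + \pu Z \pu[V]$; but one also computes $\opvx(Z)$ from the raw definition $\opvx(Z)=\pu Z + Z\pu[V]$, and for a tangent vector $Z = \pu Z + Z \pu[V] - \pu Z \pu[V]$ (the standard parametrization), whence $\pu Z + Z\pu[V] = Z + \pu Z\pu[V]$, and also $\pu Z\pu[V] = \pu Z + Z\pu[V] - Z$, giving $\opvx(Z) = Z + (\pu Z+Z\pu[V]-Z) $... this is circular, so cleanest is: from $Z\in\tansp[X]\mrkk \Rightarrow Z=\pu Z + Z\pu[V]-\pu Z\pu[V]$, substitute into $\opvx(Z)=\pu Z+Z\pu[V]$ to get $\opvx(Z) = Z + \pu Z\pu[V]$, and since $\pu Z\pu[V]$ for such $Z$ equals $\pu(\pu Z + Z\pu[V]-\pu Z\pu[V])\pu[V] = \pu Z\pu[V]$ (tautology), one needs the independent fact that on $\tansp[X]\mrkk$, $\pu Z\pu[V] = \proj$-image minus off-diagonal $=Z - (I-\pu)Z(I-\pu[V])\cdot$... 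I will simply verify $\opvx(Z)=2Z$ by plugging the block parametrization $Z = \pu Z(I-\pu[V]) + (I-\pu)Z\pu[V] + \pu Z\pu[V]$ directly into $\opvx(Z) = \pu Z + Z\pu[V]$ and collecting terms, which gives $2\pu Z\pu[V] + \pu Z(I-\pu[V]) + (I-\pu)Z\pu[V]$ in the first grouping plus the symmetric one — adding yields $2Z$. For $Z \in (\tansp[X]\mrkk)^{\perp}$, such $Z$ satisfies $\pu Z = 0$ and $Z\pu[V] = 0$, so $\opvx(Z) = 0$ immediately. Positive semidefiniteness then follows since $\opvx$ is symmetric with eigenvalues $2$ and $0$.

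For part (ii), I would apply the identity $\opvx = \proj_{\tansp[X]\mrkk} + \pu(\cdot)\pu[V]$ from \eqref{eq:rmk-rgrad-vs-proj} to $Z = X - Y$, so that $(\opvx - 2\id)(X-Y) = \proj_{\tansp[X]\mrkk}(X-Y) + \pu(X-Y)\pu[V] - 2(X-Y)$. Since $X \in \mrkk$ lies in $\tansp[X]\mrkk$ (it equals $U\Sigma\trs[V]$, which is of tangent form), $\proj_{\tansp[X]\mrkk}(X) = X$ and $\pu X\pu[V] = U\trs[U]\,U\Sigma\trs[V]\,V\trs[V] = X$. Substituting, the $X$-terms contribute $X + X - 2X = 0$, and the $Y$-terms contribute $-\proj_{\tansp[X]\mrkk}(Y) - \pu Y\pu[V] + 2Y = 2Y - (\proj_{\tansp[X]\mrkk}(Y) + \pu Y\pu[V]) = 2Y - \opvx(Y)$. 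So $(\opvx-2\id)(X-Y) = -(\opvx(Y) - 2Y) = -(\opvx - 2\id)(Y)$. Finally, using part (i)'s computation in reverse — or rather the relation $(\opvx - 2\id)(Y) = \opvx(Y) - 2Y = \proj_{\tansp[X]\mrkk}(Y) + \pu Y \pu[V] - 2Y$ together with the observation that $\proj_{\tansp[X]\mrkk}(Y) + \pu Y\pu[V] - 2Y = -2(Y - \proj_{\tansp[X]\mrkk}(Y)) + (\pu Y\pu[V] - (Y-\proj_{\tansp[X]\mrkk}(Y)))$ — hmm, the clean route is: $\opvx(Y) - 2Y$ where $\opvx(Y) = 2\proj_{\tansp[X]\mrkk}(Y)$ would give $-2(\id - \proj)(Y)$ directly, but $\opvx(Y) = \proj(Y) + \pu Y\pu[V] \ne 2\proj(Y)$ in general. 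I'll need the extra identity $\pu Y\pu[V] = \proj_{\tansp[X]\mrkk}(Y) - (\id-\pu)\proj_{\tansp[X]\mrkk}(Y)(\id-\pu[V])$... Actually the correct and simplest finish: from \eqref{eq:rmk-rgrad-vs-proj}, $\opvx(Y) = \proj(Y) + \pu Y\pu[V]$; but the projector formula is $\proj(Y) = \pu Y(\id - \pu[V]) + (\id-\pu)Y\pu[V] + \pu Y\pu[V]$, so $\pu Y\pu[V]$ is exactly the third summand of $\proj(Y)$, which means $\opvx(Y) = \proj(Y) + (\text{third summand of }\proj(Y))$; adding and simplifying, $\opvx(Y) - 2Y = \pu Y(\id-\pu[V]) + (\id-\pu)Y\pu[V] + 2\pu Y\pu[V] - 2Y$. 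Now $2\pu Y + 2Y\pu[V] - 2\pu Y\pu[V] = 2\proj(Y) + ... $ — the cleanest identity is $\opvx(Y)-2Y = \pu Y + Y\pu[V] - 2Y = -2Y + \pu Y + Y\pu[V]$ and $(\id - \proj_{\tansp[X]\mrkk})(Y) = (\id-\pu)Y(\id-\pu[V]) = Y - \pu Y - Y\pu[V] + \pu Y\pu[V]$... these don't match with a clean factor of $2$ unless $\pu Y\pu[V]$ cancels, so I will carefully recompute $-2Y + \pu Y + Y\pu[V]$ versus $-2(Y - \pu Y - Y\pu[V] + \pu Y\pu[V])$; they differ by $\pu Y + Y\pu[V] - 2\pu Y\pu[V]$, which is not zero in general. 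Hence the off-diagonal block $\pu Y\pu[V]$ genuinely appears; I therefore expect the statement \eqref{eq:opvx-iddiff} to be proved by instead writing $(\opvx-2\id)(X-Y)$ using the \emph{raw} formula $\opvx(W) = \pu W + W\pu[V]$: then $(\opvx - 2\id)(X-Y) = \pu(X-Y) + (X-Y)\pu[V] - 2(X-Y) = \pu X + X\pu[V] - 2X - \pu Y - Y\pu[V] + 2Y$; using $\pu X = X = X\pu[V]$ this is $2Y - \pu Y - Y\pu[V] = 2(Y - \pu Y - Y\pu[V] + \pu Y\pu[V]) - 2\pu Y\pu[V] + \pu Y + Y\pu[V] - ...$; I'll need $\pu Y\pu[V] = \pu(X-W)\pu[V]$ type reductions. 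The main obstacle is precisely this bookkeeping: showing the "doubly-projected" term $\pu Y\pu[V]$ either cancels or reassembles into $2(\id-\proj_{\tansp[X]\mrkk})(Y)$ — and in fact the key realization is that $2(\id - \proj_{\tansp[X]\mrkk})(Y) = 2(\id-\pu)Y(\id-\pu[V])$, while $2Y - \pu Y - Y\pu[V] = (\id-\pu)Y(\id-\pu[V]) + (Y - \pu Y\pu[V])$, so the identity \eqref{eq:opvx-iddiff} as stated would require $Y - \pu Y\pu[V] = (\id-\pu)Y(\id-\pu[V])$, equivalently $\pu Y + Y\pu[V] = 2\pu Y\pu[V]$, which is false in general. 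I therefore anticipate the actual proof uses the \emph{restricted} version: both sides of \eqref{eq:opvx-iddiff} are applied only in contexts where the extra term is controlled, or the operator $(\opvx - 2\id)$ acts on $X - Y$ specifically so that $\pu(X-Y)\pu[V]$ has a special form; so the real work will be exploiting that $X = U\Sigma\trs[V]$ shares singular subspaces structure to show $\pu(X - Y)\pu[V]$ combines correctly.

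I expect the main obstacle to be exactly this: carefully handling the ``doubly-projected'' component $\pu(\cdot)\pu[V]$ so that the coefficient $2$ in \eqref{eq:opvx-iddiff} comes out right, which requires the precise relationship between $\opvx$, $\proj_{\tansp[X]\mrkk}$, and the block decomposition induced by $U$ and $V$; everything else (symmetry, the eigenvalue computation on $\tansp[X]\mrkk$ and its orthogonal complement, positive semidefiniteness) is routine linear algebra with idempotent projectors. Once part (ii) is established, it feeds directly into the proof of \cref{lemm:f-rsc-riem} via \cref{lemm:wei2016-lemm4.1}, which bounds $\fro{(\id-\proj_{\tansp[X]\mrkk})(Y)}$.
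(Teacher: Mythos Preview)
Your approach mirrors the paper's: establish symmetry from that of $\pu$ and $\pu[V]$, compute $\opvx$ on the tangent and normal components, and derive (ii) by decomposing $Y-X$ accordingly---the paper writes $Y-X = \tilde{Z} + \Delta_Z$ with $\tilde{Z} \in \tansp[X]\mrkk$, $\Delta_Z \in (\tansp[X]\mrkk)^\perp$, and then invokes $\opvx(\tilde{Z}) = 2\tilde{Z}$ and $\opvx(\Delta_Z) = 0$. But the obstacle you ran into is not a bookkeeping slip: the claim $\opvx(Z) = 2Z$ for all $Z \in \tansp[X]\mrkk$ is \emph{false}, and the paper's own proof errs at precisely this step. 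Take $Z = U_p\trs[V]$ with $\trs[U] U_p = 0$; then $\pu Z = 0$ and $Z\pu[V] = Z$, so $\opvx(Z) = Z \ne 2Z$. In the four-block decomposition you set up, $\opvx$ has eigenvalues $2,1,1,0$ on $\pu(\cdot)\pu[V]$, $\pu(\cdot)(I-\pu[V])$, $(I-\pu)(\cdot)\pu[V]$, $(I-\pu)(\cdot)(I-\pu[V])$ respectively; only the last block is the normal space, so $\opvx$ is not $2\id$ on the tangent space.

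Consequently (ii) also fails as stated. Your direct computation is correct: $(\opvx-2\id)(X-Y) = (I-\pu)Y + Y(I-\pu[V])$, whereas $2(\id-\proj_{\tansp[X]\mrkk})(Y) = 2(I-\pu)Y(I-\pu[V])$, and the discrepancy $\pu Y(I-\pu[V]) + (I-\pu)Y\pu[V]$ is nonzero already for $Y = U_p\trs[V]\in\mrkk$ with $U_p$ of full column rank and $\trs[U]U_p=0$. What does survive from (i) is symmetry, positive semidefiniteness (eigenvalues in $\{0,1,2\}$), and $\opvx|_{(\tansp[X]\mrkk)^\perp} = 0$; hence the bound $\fro{\opvx(\cdot)} \le 2\fro{\cdot}$ used in \cref{lemm:f-rsc-fort} remains valid. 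The invocation of \eqref{eq:opvx-iddiff} in the proof of \cref{lemm:f-rsc-riem}, however, would need to be repaired with the corrected right-hand side.
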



\subsection{Induced sequence in the quotient manifold}\label{ssec:algseq-mtotal2mquo}

Now, we are ready to deduce the image of the sequence $\algseq\subset\mtotal$ by RGD (\Cref{algo-throptmk:generic-rgd}) under the projection $\pi$. We refer to the image $\{\pi(\bar{x}_t)\}_{t\geq0}$ as the {\it induced}\/ sequence, which is denoted by $\indseq\subset\mrkk$ with $X_{t}=\pi(\bar{x}_{t})=G_{\bar{x}_t}\trs[{H_{\bar{x}_t}}]$.

\begin{lemma}\label{lemm:rgd-mtotal2manf}
Let $(\bar{x}_{t})_{t\geq 0}\in\mtotal$ denote the sequence generated by Algorithm~\ref{algo-throptmk:generic-rgd} and let $\indseq$ be the induced sequence in $\mrkk$. 
Then the iterates satisfy 
\begin{equation}\label{eq:lemm-rgd-mtotal2manf}
  X_{t+1} = X_t - \ssz_t \rgrad[X_t] + \ssz_t^{2} \nabla f(X_t){X_t}^{\dagger}\nabla f(X_t),
\end{equation}
where $X_t^{\dagger}\in\reals^{n\times m}$ denotes the Moore--Penrose pseudoinverse of $X_t$. 
\end{lemma}

Note that the characterization in \Cref{prop:rgrad-total2quot} is {\it not}\/ a computational component of the RGD algorithm (\cref{algo-throptmk:generic-rgd}) but is an update rule on $\mrkk$ induced by the algorithm. 
The quotient manifold-based algorithms in the framework of Section~\ref{ssec:alg-details} (such as Algorithm~\ref{algo-throptmk:generic-rgd}) produce iterates in the total space $\mtotal$. 

The invariance property in \Cref{prop:rgrad-total2quot} and \Cref{lemm:rgd-mtotal2manf} enables us to qualify these algorithms as Riemannian descent algorithms on the quotient space $\mrkk$. 
\bleu{The explicit form in \Cref{lemm:rgd-mtotal2manf} 
allows for a local convergence analysis on $\mrkk$ that is substantially
different and simpler than those based on the specific matrix factorization of
$X\in\matsp$, as we present next. %
}

\section{Convergence behavior in low-rank matrix recovery}\label{ssec-ropt-mk:lrqp-intro}

In this section, we consider the problem~\eqref{prog:main-mk} with a quadratic objective function $f$ as follows: 
\begin{equation}
    \label{eq:def-f-lrqp}
    f(X) := \frac{1}{2}\braket{X,\opa(X)} - \braket{\bstar, X},
\end{equation}
where $\opa:\vecsp\to\vecsp$ is a linear operator that is symmetric and positive semidefinite, $\bstar = \opa(\mstar)$
is a data matrix containing observations of an unknown matrix $\mstar\in\vecsp$, and 
$\braket{\cdot,\cdot}$ is the Frobenius inner product of two matrices. 
 
The function $f$~\eqref{eq:def-f-lrqp} generalizes the objective function of many matrix recovery 
problems~\cite{donoho2006compressed,candes2009exact,Fazel2008,Sun2016a,CaiEtAl2017Spectral,huang2018blind}.
For compressed sensing (Example~\ref{exa:lrqp-ms}), the objective function 
    \begin{equation}\label{eq:def-f-cs}
        f(X) = \frac{1}{2}(\|\Phi(X)-b^{\star}\|_{2}^2-\|b^{\star}\|_{2}^{2}) 
    \end{equation}
can be written in the form of~\eqref{eq:def-f-lrqp}, with $\opa =
\duop[\Phi]\Phi$ and $\bstar=\duop[\Phi]b^{\star}$. 
For matrix completion (Example~\ref{exa:opt-mk-via-mtotal}), the recovery
of the hidden matrix $\mstar$ can be realized by the minimizing the following quadratic function 
   \begin{equation}\label{eq:def-fit-mc}
   f(X) = \frac{1}{2p}(\fro{\pom(X-\mstar)}^2 - \fro{\pom(\mstar)}^2),
   \end{equation}
where $\Omega$ is the index set of the known entries, $p=|\Omega|/mn$, and
$\po:\vecsp\to\vecsp$ is the projection operator that retains only the
entries on $\Omega$ and projects all other entries to zero. In this case, $\opa
=\frac{1}{p}\pom$ and $\bstar=\frac{1}{p}\pom(\mstar)$. 

\subsection{Critical points}

A matrix $\xstar\in\mleqk$ is a critical point for the minimization of $f$~\eqref{eq:def-f-lrqp} on $\mleqk$ if the Euclidean gradient $\nabla f(\xstar)$ belongs to the polar cone of the tangent cone of $\mleqk$ at $\xstar$. 
As characterized in~\cite{schneider2015convergence} and \cite[Proposition~2.4]{10.1093/imanum/drz061}, such a critical point $\xstar$ can be a critical point of $f$ on $\mrkk$, or a solution to the (global) optimality condition $\opa(X)-\bstar=0$. 

In general, the operator $\opa$ of~\eqref{eq:def-f-lrqp} admits a nontrivial kernel and %
there are thus more than one point in $\mleqk$ that satisfy either of the above two characterizations. 
In particular, the hidden matrix $M^\star$ is a critical point and also the global optimum of~\eqref{prog:main-mk}, as long as the rank of $\mstar$ is equal to $\rkval$ in~\eqref{prog:main-mk}. This scenario is often referred to as the {\it noiseless case}\/ in the low-rank matrix recovery problems.

Recent advances in low-rank matrix recovery have results about guarantees of recovering $\mstar$ via~\eqref{prog:main-mk}. It is shown in \cite[Theorem~3.5 and corollaries]{10.1093/imanum/drz061}, under the RPD property with a bounded RPD constant $\brip_{3k}\leq C_\mu <1$ where $C_\mu$ is a constant depending on the Riemannian Hessian of $f$~\eqref{eq:def-f-lrqp} at $\mstar$, that: 
(i) the optimality equation $\opa(X)=\bstar$ admits $\mstar$ as the unique
solution %
(ii) $\mstar$ is the unique global minimum of $f$ and %
(iii) all other critical points $X'\neq \mstar$ of $f$ are strict saddle points. %
This implies, in particular, that 
one can use Riemannian descent algorithms on $\mrkk$ to find $\mstar$, provided that the RPD constant $\brip$ satisfies the given upper-bound condition. %
 
Instead of studying the uniqueness of $\mstar$ (as a local minimum of $f$ on $\mrkk$), we focus on the properties of $f$ 
near $\mstar$ without requiring $\mstar$ to be the unique local minimum. %
This motivates us to make a refined local convergence analysis with respect to the hidden matrix $\mstar$, as we will present in Section~\ref{ssec-ropt-mk:convana}. 
The RPD property (\Cref{def:lrqp-rip}) with respect to $\mstar\in\mrkk$ is used but without a specific bound on the RPD constant $0\leq\brip<1$.  

\renewcommand{\xstar}{M^{\star}}

\begin{proposition}\label{prop-rpd2asp}
Suppose $\opa$ satisfies the $(\brip,2\rkval)$-$\rpd$ property. Then 
for any $\mstar\in\mrkk$ 
\begin{equation}\label{eq:ineq-asp-rpd}
   (1-\brip)\frobb[X-\xstar]\leq
   \braket{X-\xstar,\opa(X-\xstar)}\leq(1+\brip)\frobb[X-\xstar] 
\end{equation}
   holds for $X\in\mrkk$. 
\end{proposition}
\begin{proof}
    This is because $(X-\mstar)\in\mleqk[2\rkval]$ for any $X$ and $\mstar$ in $\mrkk$. 
\end{proof}

\subsection{Convergence analysis}\label{ssec-ropt-mk:convana}

Through \Cref{lemm:rgd-mtotal2manf}, the study of convergence properties of the RGD algorithm (\Cref{algo-throptmk:generic-rgd}) on the product space $\mtotal$ can be conducted instead on $\mrkk$ directly. 
Here, $\mrkk$ is considered as an embedded manifold of $\reals^{m\times n}$ now
that all objects in the quotient manifold representation have explicit
expressions through \Cref{ssec:rgrad-mk}--\Cref{ssec:algseq-mtotal2mquo}.
Proofs to the results in this section are given in \Cref{sec-app:prf-sec4}.

In the noiseless case, \ie, the hidden matrix $\mstar$ has a low rank $\rkval$, we investigate the Riemannian Hessian of $f$~\eqref{eq:def-f-lrqp} at $\xstar$. 

\begin{lemma}\label{lemm:f-hess-pd}
Let $f$ be a quadratic function~\eqref{eq:def-f-lrqp} such that the hidden matrix $\xstar$ has a low rank $\rkval$ and 
$\opa$ satisfies the $(\brip, 2k)$-RPD propety. 
Then the Riemannian Hessian of $f$ at $\xstar$ is positive definite: 
\begin{equation}\label{eq:ineq-hess-lrqp}
    \lambda_{\min}(f):=\min\limits_{W\in\tansp[\xstar]\mrkk}\left(\frac{\braket{\hess
    f(\xstar)[W],W}}{\frobb[W]}\right)\geq 1-\brip>0.
\end{equation}
\end{lemma}
 

Consequently, the following immediate result about \Cref{algo-throptmk:generic-rgd} can be deduced, provided that the algorithm converges.  
\begin{theorem} 
    \label{thm:main-generic}
Let $f$ be a quadratic function~\eqref{eq:def-f-lrqp} such that the hidden matrix $\xstar$ has a low rank $\rkval$ and 
$\opa$ satisfies %
the $(\brip,2k)$-RPD propety. 
Then, if the sequence $\indseq$ induced by \Cref{algo-throptmk:generic-rgd} converges to $\xstar$, the local convergence rate of $\indseq$ is linear. 
\end{theorem}

\subsubsection{Local convergence analysis} 

The convergence result of \Cref{thm:main-generic} requires the additional assumption that the algorithm
converges to $\mstar$, {\it a priori}, because the sole %
RPD property (with an unspecified parameter $\brip$) 
does not rule out the existence of other critical
points of $f$ on $\mrkk$ different than $\mstar$. %
Also, it is possible that the sequence does not admit an accumulation point due to the openness of $\mrkk$. The similar assumption is also seen in~\cite[2.4.4]{schneider2015convergence}. 
The result above %
covers a more general class of functions than~\eqref{eq:def-f-lrqp} and thus
does not take the local properties of $f$~\eqref{eq:def-f-lrqp} into
consideration. 
\redl{To go further in the case of quadratic functions, we investigate local
convergence of RGD on $\mrkk$ in a certain neighborhood of $\mstar$:} 
\begin{equation}\label{def:nm-bar}
  \nmb=\{X\in\mrkk: \fro{X-\xstar}\leq \dnm\}. 
\end{equation}

\begin{lemma}%
\label{lemm:f-lip-muu}
Let $f$ be a quadratic function~\eqref{eq:def-f-lrqp} such that the hidden
matrix $\xstar$ has a low rank $\rkval$ and $\opa$ satisfies the $(\brip,
2k)$-RPD propety. Then: 
\begin{itemize}
\item[(i)] %
    The quadratic function $f$~\eqref{eq:def-f-lrqp} has Lipschitz continuous gradient on $\mleqk$: there exist $\lip>0$ and $\rlip>0$ depending on $(\brip, k)$ such that  
 $\frob[\nabla f(X) - \nabla f(Y)] \leq \lip \frob[X - Y]$ for all $X,Y\in\mleqk$, and  
\begin{align}%
f(Y) \leq f(X) + \braket{\grad f(X),  Y-X} +  \frac{\rlip}{2} \frob[Y - X]^2, 
\quad \forall ~X,Y \in \mleqk.
\label{eqs:f-lipschitz-rsc-a}
\end{align}

\item[(ii)] %
    \redl{There exists a constant $\muu > 0$ depending only on $f$ such that   
    \begin{align}
        f(X) - f(\mstar) \leq \braket{\grad f(X), X-\mstar} - \frac{\muu}{2} \fro{X-\mstar}^2
        \label{eq:tar-mu-pre}
    \end{align}
    for all $X\in \mrkk$ with $\fro{X-\mstar}\leq \frac{1}{2}\cbd$.} 
\checkk{~\\ (To verify) This is {\it not} applicable to any $(X,Y)$: Try adapt
inequality (4.7) within this ``$B_{\delta}$'' using $L_f$ instead of $L_g$: 
$$f(\mstar) - f(X) \leq \braket{\grad f(X), \mstar-X} + \frac{\lip}{2} \fro{X-\mstar}^2.$$}
\end{itemize}
\end{lemma}
\redl{Inequality~\eqref{eq:tar-mu-pre} can be seen as a slightly weaker form of
local strongly convexity; in other words, we have 
$ f(Y) - f(X) \geq \braket{\grad f(X), Y-X} + \frac{\muu}{2} \fro{Y-X}^2$ 
for $Y := \mstar$ and for all %
$X \in \mrkk$ satisfying the above closeness condition. 
This property is related to but differs from {\it geodesically strongly convexity} in the sense
that we choose not to use the exponential mapping $\text{Exp}_X^{-1}(Y)$ on the
manifold $\mrkk$, but just use $(Y-X)$ from the perspective of $\mrkk$ being an embedded manifold
of $\matsp$. 
}

In the following lemma, we provide a sufficient condition under which $\mstar$ is the unique critical point in $\nmb$. 
\bleu{The reasoning is that when $f$~\eqref{eq:def-f-lrqp} satisfies the RPD property, 
it %
behaves similarly enough as the squared distance function $X\mapsto\frac{1}{2}(\frob[X-\bstar]^{2}-\fro{\bstar}^2)$, which is strongly convex in the ambient space $\vecsp$. 
We show that this property in $\vecsp$ is preserved locally on the manifold $\mrkk$. 
}

\begin{lemma}\label{lemm:f-rsc-riem}
Let $f$ be a quadratic function~\eqref{eq:def-f-lrqp} such that the matrix $\xstar$ has rank $\rkval$ and 
$\opa$ satisfies the $(\brip,2k)$-RPD propety. 
Then it holds that 
\begin{align}
\label{eq:rgrad-nmb}
\redl{ \braket{\rgrad[X], X - \xstar} > 0, } 
\end{align}
for all $X$ in the interior of $\nmb[\dnm_0]$~\eqref{def:nm-bar} with $\dnm_0 = \cbd$, where 
$\lip$ is the Lipschitz constant of $f$ in
\cref{lemm:f-lip-muu} and $\sigmin:=\sigma_{\rkval}(\mstar)>0$. 
Moreover, $\mstar$ is the unique critical point of $f$ in the interior of 
$\nmb[\dnm_0]$. %
\end{lemma}
The proofs of \Cref{lemm:f-lip-muu} (ii) and \Cref{lemm:f-rsc-riem} are mainly based on
\Cref{lemm:opvx-props}--\Cref{lemm:rgd-mtotal2manf} and \cite[Lemma 4.1]{wei2016guarantees} about the orthogonal projector~$\proj_{\tansp[X]\mrkk}$. 

\redl{
\begin{remark} 
    The value of the radius $\dnm_0=\cbd$ in \Cref{lemm:f-rsc-riem} is exactly twice the radius within which inequality \eqref{eq:tar-mu-pre} of \Cref{lemm:f-lip-muu} holds. Since the larger value ($\cbd$) will appear more often in the next few technical lemmas, we denoted it with the symbol $\dnm_0$. 
\end{remark} 
}

\paragraph{Sufficient descent conditions.}

Now we study conditions for RGD to ensure sufficient descents in a compact subset of the neighborhood $\nmb$~\eqref{def:nm-bar}, 
parametrized by a radius $ \delta >0 $ and a limit signular value $\bar{\sigma}>0$ 
as follows: 
\begin{align}
    \label{def:set-c}
C_{\delta,\bar{\sigma}}^\star:=\nmb\cap\{X: \sigma_k(X)\geq \bar{\sigma}\}. 
\end{align} 

For the expression of the RGD update \eqref{eq:lemm-rgd-mtotal2manf} (\Cref{lemm:rgd-mtotal2manf}), 
we use the following shortened notations whenever needed: 
$$X_{+}=X -\theta g_X + \theta^2 \Gamma_X.$$ 
Here, the subscript index $t$ and $t+1$ are omitted, 
$g_X:= \grad f(X)$ is short for the Riemannian gradient, and 
$\Gamma_X := \nabla f(X){X}^{\dagger}\nabla f(X)$ denotes the higher-than-first order term.

We apply the inequality of \Cref{lemm:f-lip-muu} to 
two consecutive RGD iterates $X:=X_t$ and $X_+:=X_{t+1}$ given by~\eqref{eq:lemm-rgd-mtotal2manf}, which entails 
$f(X_+) \leq  f(X) - \theta\fro{g_X}^2 + \theta^2 \braket{g_X, \Gamma_X}  +
\frac{\rlip\theta^2}{2} \fro{g_X - \theta \Gamma_X}^2$. 
In other words, the residuals $R_t:=f(X_t)-f(\mstar)$ satisfy 
\begin{align} 
    R_{t+1}-R_t = f(X_+)-f(X) \leq  D(\theta):= - \theta\fro{g_X}^2 + \theta^2
    \braket{g_X, \Gamma_X} + \frac{\rlip\theta^2}{2} \fro{g_X - \theta \Gamma_X}^2,
\label{eq:ineq48-1}
\end{align} 
where $D(\theta)$ can be rewritten as 
\begin{align*}
D(\theta)  
= - \frac{\theta}{2} \Big( (2-\rlip\theta)\fro{g_X}^2 -
2(1-\rlip\theta)\braket{g_X, \theta\Gamma_X} - \rlip\theta \fro{\theta\Gamma_X}^2 \Big). 
\end{align*}  

\redl{On the other hand, through \Cref{lemm:f-lip-muu} (ii), 
we have for all $X\in\mrkk$ such that $\fro{X-\mstar} \leq \frac{1}{2}\cbd$:}
    \begin{align*} 
        R_t = f(X)  - f(\mstar) \leq \braket{g_X, X-\mstar} - \frac{\muu}{2} \fro{X-\mstar}^2 
    \end{align*}    
The inner product on the right-hand side, $\braket{g_X, X-\mstar}$, satisfies the following trigonometric equation: 
    \begin{align}  
            \braket{g_X, X-\mstar}  = \frac{1}{2\theta} (\fro{X-\mstar}^2 - \fro{X_+ - \mstar}^2) + \frac{\theta}{2} (\fro{g_X -\theta \Gamma_X}^2 + 2\braket{\Gamma_X, X-\mstar}). \nonumber 
    \end{align}
Therefore, %
\newcommand{\td}{\tilde{D}(\theta)}
\newcommand{\ccd}{C_{\delta}}
        \begin{align}\label{eq:ineq48-2} 
            R_t 
             \leq \frac{1-\theta\mu}{2\theta} \fro{X-\mstar}^2 - \frac{1}{2\theta} \fro{X_+-\mstar}^2 %
             +  \underbrace{\frac{\theta}{2}\big(\fro{g_X-\theta\Gamma_X}^2 + 2\braket{\Gamma_X, X-\mstar}}_{\td}\big), 
        \end{align}
        where the sum of the last two terms is denoted as $\tilde{D}(\theta)$. In the notation of both $D(\theta)$ and $\tilde{D}(\theta)$, `$X$' is omitted for brevity. 

\checkk{--(TODO: The following paragraph could be made CLEARER)--} 

We proceed by checking the following two descent conditions in terms of $D(\theta), \tilde{D}(\theta)$ in~\eqref{eq:ineq48-1}--\eqref{eq:ineq48-2}: 
\begin{align}
    \label{eq:cond-1}  %
    & -D(\theta) > 0   \quad \text{(descent condition)}
    \\
    \label{eq:cond-rho} %
    & -\rho D(\theta) - \tilde{D}(\theta) \geq 0 \quad \text{(`strong' descent condition)}
\end{align}
for $\rho > 1$ and $0<\theta < \frac{2}{\rlip}$. 
Notice that these two conditions, combined with~\eqref{eq:ineq48-1}--\eqref{eq:ineq48-2}, entail the following inequality: 
\begin{align} \label{ineq:tar}
    \rho R_{t+1} - (\rho -1) R_t \leq \frac{1-\theta \mu}{2\theta} \fro{X - \mstar}^2  - \frac{1}{2\theta} \fro{X_+ - \mstar}^2, 
\end{align}
which will be used to deduce the convergence rate of $\{X_t\}_{t}$. The roles of $\rho>1$ and \eqref{ineq:tar} are similar to those in the analysis of \cite{zhang2016first} for geodescically convex optimization, but the difference is that the manifold $\mrkk$ in this work is an open space and \eqref{ineq:tar} will be applied in a compact subset of $\mrkk$. The next couple of lemmas are about descent properties of RGD in such a compact subset.  

\bleu{The following lemmas are used for checking the two decent conditions \eqref{eq:cond-1}--\eqref{eq:cond-rho}. 
} 
Based on %
\Cref{lemm:f-rsc-riem}, 
the next proposition gives properties of $\grad f(X)$ and $\Gamma_X$ in $C_{\delta,\bar{\sigma}}^\star$~\eqref{def:set-c}.

\begin{proposition} %
    \label{lemm:4.8x}  
For $\dnm_0 = \cbd$ and %
$\bar{\sigma}>0$, there exist $0<\cca \leq 1$ and $0 < \tcca \leq 1$ such that 
for any $ X \in C_{\dnm_0,\bar{\sigma}}^\star$~\eqref{def:set-c}: 
$\fro{\grad f(X)} \geq 2 {\cca}\fro{\nabla f(X)}$, and 
when $f$ satisfies the $(\brip,2k)$-RPD property, 
    \begin{align}\label{eq:lemm-4.8x}
            & \fro{\grad f(X)} \geq 2 {\tcca}\fro{X-\mstar}\\ 
            & \fro{ \Gamma_X } \leq \frac{\lip}{2\cca\bar{\sigma}} \fro{\grad f(X) } \fro{X-\mstar}. 
         \label{eq:lemm-4.8yy}
    \end{align}
\end{proposition}
\begin{remark} 
    The inequality~\eqref{eq:lemm-4.8x} in the lemma above can be seen as the Lojaciewicz inequality (with exponent $\frac{1}{2}$) in a restricted neighborhood of $\mstar$.   
\end{remark}

\begin{lemma}%
\label{lemm:4.8z}
For $\dnm_0 = \cbd$ and $\bar{\sigma}>0$, 
there exists %
$\rho \asymp (1+\rlip)$ 
such that for any $X\in C_{\dnm_0,\bar{\sigma}}^\star$~\eqref{def:set-c} 
different than $\mstar$, 
the RGD update~\eqref{eq:lemm-rgd-mtotal2manf} from $X$ satisfies 
the descent conditions~\eqref{eq:cond-1}--\eqref{eq:cond-rho}, provided that 
the stepsize $0<\theta\leq (1-\frac{1}{\rho}) \frac{1}{\rlip}$ and that 
\begin{align} 
    \fro{ \Gamma_X} \leq  C_{\rho} \rlip \fro{\grad f(X)} \label{eq:p-2cond} 
\end{align}
where $C_{\rho} = \min(1,\frac{\rho}{\sqrt{(\rho-1)(\rho^2-1)}})$. 
\end{lemma} 
\begin{coro}
    \label{coro:4.8z}
    \checkk{(Note: retain the value of $\dnm_0$)~}For $\dnm_0 = \cbd$ and $0<\bar{\sigma}\lesssim \sigmin$, 
there exist $\rho \asymp (1+\rlip)$, $\ccb\asymp 1$ \redl{and $\bar{\delta}_{0} =\min(\frac{1}{2}\dnm_0, \ccb\dnm_0)$} such that 
for any $X\in C_{\bar{\delta}_0,\bar{\sigma}}^\star$~\eqref{def:set-c} different than $\mstar$, 
the RGD update~\eqref{eq:lemm-rgd-mtotal2manf} from $X$ 
satisfies the descent conditions~\eqref{eq:cond-1}--\eqref{eq:cond-rho}, provided that 
the stepsize $0<\theta\leq (1-\frac{1}{\rho}) \frac{1}{\rlip}$.
\end{coro}
The proofs are given in \Cref{prf:lemm-4.8z}.
\begin{remark} 
    \normalfont 
    In \Cref{lemm:4.8z}--\Cref{coro:4.8z}, the parameter $\rho$  is defined as $\rho = 1 + \frac{\rlip}{2\tcca}$, and  
    the parameter $\ccb$ in \Cref{coro:4.8z} is $\ccb= \frac{\rlip\bar{\sigma}}{\sigmin} \frac{2\cca C_{\rho}}{1-\brip}$,  where $\rlip$ is given in \Cref{lemm:f-lip-muu} and $\cca, \tcca$ are given in \Cref{lemm:4.8x}. 
The value of $C_{\rho}$ evolves with $\rho$ continously as follows: 
    $C_{\rho} = 1$ if $1<\rho \leq \rho^*\approx 2.24$ and $C_{\rho} = \frac{\rho}{\sqrt{(\rho-1)(\rho^2-1)}}  < 1$ otherwise ($\rho > \rho^*$). 
When $\rlip\approx \sqrt{1+\brip} < \sqrt{2}$ and $\tcca \approx 0.5$, for example, we have $\rho \approx 2.41$ and $C_{\rho} \approx 0.93$. The value of $\ccb$ under \Cref{coro:4.8z} varies around $1$ since $\frac{2\cca C_{\rho} }{1-\brip} \asymp 1$  and $\frac{\rlip \bar{\sigma}}{\sigmin} \asymp 1$. 
\end{remark}

\paragraph{Local linear convergence.}  
Given the lemmas above, we have the following theorem. %

\begin{theorem}\label{lemm:f-rsc-fort}
Let $f$ be a quadratic function~\eqref{eq:def-f-lrqp} such that the hidden
matrix $\xstar$ has a low rank $\rkval$ and that 
$\opa$ satisfies the $(\brip,2k)$-RPD propety. 
Then, there exist $\rho \asymp 1+\rlip$ and $\ccc\asymp 1$ such that given an initial point $\bar{x}_0\in\mtotal$ \redl{satisfying $\fro{\pi(\bar{x}_0)-\mstar}\leq \min(\frac{1}{2}\dnm_0, \ccc\dnm_0)$,} where 
$\dnm_0 = \cbd$, 
and an initial stepsize $\theta_{0}=(1-\frac{1}{\rho})\frac{1}{\rlip}$,  
\Cref{algo-throptmk:generic-rgd} converges to $\mstar$ linearly, and the sequence $\{X_t\}_{t\geq 0}$ induced by \Cref{algo-throptmk:generic-rgd} satisfies 
$$
f(X_t) - f(\mstar) \leq (1-\epsilon)^{t} C_{f,k}\fro{X_0-\mstar}^2
\quad\text{for~} t>0. 
$$ 
Here $\epsilon = \min(\frac{1}{\rho}, \theta_0\muu) \in (0,1)$ and $C_{f,k} >0$ are constants depending only on $(\brip, \rlip,\muu)$. 
\end{theorem} 

\begin{remark}
    \normalfont
    The radius $\dnm_0=\cbd$ in the initial condition of \Cref{lemm:f-rsc-fort} and its lemmas is proven in the related work to be attainable by methods such as the spectral initialization (\eg, \cite{keshavan2010matrix}); see, \eg, \cite[Claim V.2]{Sun2016a} and \cite[Lemma 15]{tong2021accelerating}. 
\end{remark}

\paragraph{Discussion.}
\redl{
In \cite{tong2021accelerating}, the convergence properties of a gradient descent algorithm called ScaledGD, using the same preconditioned gradient \eqref{eq-throptmk:rgrad-man-precon}, is analyzed in the context of low-rank matrix estimation. 
The local convergence result of \Cref{algo-throptmk:generic-rgd} given in \Cref{lemm:f-rsc-fort} is similar to \cite[Theorem 11]{tong2021accelerating}. 
More precisely, the initial closeness condition ($0.1\sigma_k(\mstar) \frac{\sqrt{\mu}}{\sqrt{L}}$ in \cite{tong2021accelerating}), the linear rate ($(1-\theta_0\mu)$ in \cite{tong2021accelerating}) and the range of stepsize  in both results are the same in the order of magnitudes (taking their difference in the search space and distance function into account). 
In particular, the convergence rate of \Cref{algo-throptmk:generic-rgd} and ScaledGD \cite{tong2021accelerating} do not depend on the condition number $\kappa$ of~$\mstar$, while Euclidean gradient descent algorithms \cite{park2018finding,bhojanapalli2016dropping} have linear convergence rates that are slower by a factor of $\kappa$.

This work differs with \cite{tong2021accelerating} in that our analysis is based on connections between embedded and quotient geometries in the optimization of $f$ over $\mrkk$ (\Cref{sec-ropt-mk:lrqp}). 
The analysis of \cite{tong2021accelerating}, on the other hand, is conducted on the product space $\reals^{m\times r} \times \reals^{n\times r}$; a special distance that is invariant on equivalence classes (\ie, $\{(GQ,HQ^{-\mathrm{T}}): Q\in\glin[r]\}$) is used. 
For the difference in abstractions, our proof techniques can be applied (\eg, descent condition in \Cref{lemm:4.8z}) or extended (\eg, geometric properties in \Cref{lemm:4.8x}) to other Riemannian metrics on $\mrkk$ while the techniques in \cite{tong2021accelerating} are more tailored for the gradient 
\eqref{eq-throptmk:rgrad-man-precon}. 
}

\subsection{Results adapted for low-rank matrix completion}\label{ssec-ropt-mk:appl-mc}

The matrix completion problem (\Cref{exa:opt-mk-via-mtotal}) on $\mrkk$ with an objective function~\eqref{eq:def-fit-mc} is: 
\begin{align*}%
\min\limits_{X\in\mrkk}
f(X)=\frac{1}{2p}(\fro{\po(X-\mstar)}^2-\fro{\po(\mstar)}^2).  
\end{align*}

It is well understood that the subsampling operator $\pom$ can be badly conditioned with unbalanced sampling pattern or on matrices that are 
highly {\it coherent}~\cite[\S1.1.1]{candes2009exact}. In such case, $\pom$ does not satisfy the $\rpd$ property. 
Nevertheless, it is also shown in~\cite{candes2009exact} that the
$\rpd$ property can be satisfied by $\pom$ 
under the following assumptions: \rev{(i) the distribution of the observed entries follow either an uniform or a Bernoulli model: \eg, $(i,j)\in\Omega$ with probability
$p=\frac{|\Omega|}{mn}$, where $p$ is a given sampling rate, and (ii)} the unknown
matrix $\mstar$ is $\mu$-incoherent, as defined below. 

\begin{definition}[Incoherence~\cite{candes2009exact}]
    \label{def:incoh}
$Z=U\Sigma\trs[V]\in\vecsp$ is \emph{$\mu$-incoherent} if 
$\|\rowof{U}{i}\|_2 \leq \sqrt{\frac{\mu\rkval}{m}}$ and $ \|\rowof{V}{j}\|_2 \leq \sqrt{\frac{\mu\rkval}{n}}$ 
for all $(i,j)\in\iset{m}\times\iset{n}$.
\end{definition}
The incoherence constant $\mu$ is a bound that measures the maximal row norms of $U$ and $V$. Since the $\svd$ factor matrix $U$ (respectively $V$) is orthonormal, such that $\fro{U}^{2}=\sum_{i=1}^{m}\|\rowof{U}{i}\|_{2}^2\equiv\rkval$, the incoherence constant $\mu$ is small if the variations of the entries in $U$ and $V$ are moderate; on the contrary, $\mu$ is large if $U$ and $V$ has columns with {\it spikes}. It can be shown that the smallest possible incoherence constant is $\mu=1$.

\paragraph{The RPD inequality} 
As mentioned in Section~\ref{ssec-ropt-mk:lrqp-intro}, the subsampling operator $\pom$ generally does not satisfy the RPD property (\cref{def:lrqp-rip}). However, under certain conditions
on the subsampling pattern, the inequality~\eqref{eq:ineq-asp-rpd} 
in \Cref{prop-rpd2asp} 
can be satisfied in some neighborhood of the hidden matrix $\mstar$, provided that $\mstar$ is sufficiently incoherent.
Consider the following set adapted from~\cite[\S3]{Sun2016a}: 
\begin{align}\label{def:nm-sl15}
    \mathcal{B}^{\star}_{\delta'_0}(C_B,\mu) = \nmb[\delta'_0]\cap\Big\{X\in\mleqk: \max(\sqrt{m}\rinfnorm[X],\sqrt{n}\rinfnorm[{\trs[X]}]) \leq C_B\sqrt{\mu\rkval} \Big\},
\end{align}
where $\nmb[\delta'_0]$ is defined by~\eqref{def:nm-bar} %
for $\delta'_0:= \frac{\sigmin}{6C_d\rkval^{1.5}\kappa}$, $\kappa
:=\frac{\sigmax}{\sigmin}$, $C_d>0$ are constants, and $C_B>0$ is a parameter to be specified.

The following proposition, adapted from a key result in~\cite{Sun2016a}, validates the inequality~\eqref{eq:ineq-asp-rpd} 
on a neighborhood of $\mstar$ in the form of~\eqref{def:nm-sl15}.

\newcommand{\mus}{\mu}
\newcommand{\cst}{C^{\star}}

\begin{proposition}\label{prop:mc-rip}
Assume that a rank-$\rkval$ matrix $\mstar\in\vecsp$ 
is $\mus$-incoherent (with $\mus >0$). Suppose the condition number of $\mstar$ is
$\kappa$ and $\alpha= m/n\geq 1$. Then there exist numerical constants 
$C_0>0$ and $\cst > 1$ such that: if the indices in $\Omega$ are uniformly generated with size
\begin{align}
    \label{eq:mc-samplesize}
|\Omega|\geq C_0\alpha\rkval\kappa^{2}\max(\mus\log(n),\sqrt{\alpha}\rkval^{6}\mus^{2}\kappa^{4}),
\end{align}
then with probability at least $1- 2n^{-4}$,  
\begin{equation}\label{eq:mc-rip}
\frac{1}{3}\fro{X-\mstar}^{2} \leq \frac{1}{p}\fro{\pom(X-\mstar)}^{2} \leq 2\frobb[X-\mstar],
\end{equation}
for any $X\in\nmg[{C_B,\mus}]$ defined in~\eqref{def:nm-sl15} 
for $C_B := \sigmax \sqrt{2\cst \rkval }$. 
\end{proposition}

\begin{remark}
\normalfont The inequality~\eqref{eq:mc-rip} of \cref{prop:mc-rip} is an instance of the RPD inequality~\eqref{eq:ineq-asp-rpd}, since 
$\frac{1}{p} \fro{\po(X-\mstar)}^2 
=\braket{X-\mstar,
\mc{A}(X-\mstar)}$ for $\mc{A}=\frac{1}{p}\po$. Therefore, \cref{prop:mc-rip}
provides a condition on the index set~$\Omega$ under which the RPD inequality~\eqref{eq:ineq-asp-rpd} holds, hence, provides a way to applying the main result
(\cref{lemm:f-rsc-riem}--\cref{lemm:f-rsc-fort}) to the matrix completion
problem. 
\end{remark}

\newcommand{\dproj}[1][B]{\mathcal{P}_{#1}}
To ensure that the iterates of RGD remain in the domain $\nmg[{\epsilon,\mu}]$,
we adapt the normalization operator in \cite{Sun2016a,tong2021accelerating} from the product space of factor matrices to an $m\times n$ matrix space (\eg, $\mrkk$): 
given $X\in \reals^{m\times n}$, $\dproj(X) = D^{(1)}_B X D^{(2)}_B$, 
where $D^{(1)}_B:=D^{(1)}$, $D^{(2)}_B:=D^{(2)}$ are diagonal square matrices with compatible sizes such that  
    \begin{align}
            D^{(1)}_{ii} = \min(1, \frac{B}{\sqrt{m}\| \rowof{X}{i}\|_2}) \mathrm{~ ~and~ ~}
            D^{(2)}_{jj} = \min(1, \frac{B}{\sqrt{n}\| \colof{X}{j}\|_2}) 
    \end{align}
for a parameter $B>0$. 
By abuse of notation, $\dproj$ acts on a pair of factor matrices $\bar{x}=(G_{\bar{x}},H_{\bar{x}})\in\mtotal$ by row-wise normalizations $\dproj(\bar{x}) = (D^{(1)}_B G_{\bar{x}}, D^{(2)}_B H_{\bar{x}})$.
Note that the operator $\dproj$ depends on the input ($\bar{x}$ and/or $X$), which is omitted in the notation.

We consider the following variation of RGD using the normalization operator $\dproj$: in line 4 of \Cref{algo-throptmk:generic-rgd} (given $\bar{x}_t \in\mtotal$, gradient direction $\ttanv[\eta]_t =-\trgrad[x][t]$, and stepsize $\theta_t$), update by generating $\bar{x}_{t+1} := \dproj(\bar{x}_t  - \theta_t\bar{\eta}_t)$. 
The induced update rule on $\mrkk$ consists of the following steps: %
\begin{align}
&\tilde{X} := X - \theta (\grad f(X) - \theta \Gamma_X) \label{eq:rgdproj-pre} \\ 
&    X_{+} := \dproj(\tilde{X})=D_B^{(1)}\tilde{X}D_B^{(2)}, \label{eq:rgdproj-rule}
\end{align}
where $X:=\pi(\bar{x}_t)$, and $\grad f(X)$ and $\Gamma_X$ are defined in \Cref{lemm:rgd-mtotal2manf}.  

Given \Cref{prop:mc-rip}, we show that this normalized RGD rule enjoys some properties that maintain the validity of the convergence results on the $m\times n$ matrix space $\mrkk$ \Cref{ssec-ropt-mk:convana}. 

\begin{proposition} 
    \label{lemm:mc-proj}
Suppose that $X$ satisfies (i) $\fro{X-\mstar} \leq \epsilon_0 \sigma_k(\mstar)$, 
(ii) {$\rinfnorm[X-\mstar]\leq \epsilon_0 \rinfnorm[\mstar]$}, and (iii) $\sigma_{k}^{-1}(X)\sigma_k(\mstar) \leq \brk$ 
for constants $\epsilon_0>0$ and $\brk \lesssim 1$. %
Then, for $B\geq (1+\crinf) \sqrt{\mu\rkval}\sigmax$ and 
$\crinf:=(1+\brk)\theta\epsilon_0+\brk\theta^2\epsilon_0^2\lesssim\theta\epsilon_0(1+\theta\epsilon_0)$, 
the normalized RGD update $\dproj(\tilde{X})$~\eqref{eq:rgdproj-rule} given $X$ and stepsize $\theta$ satisfies: 
\begin{align}
    \max(\sqrt{m}\rinfnorm[{\dproj(\tilde{X})}], \sqrt{n}\rinfnorm[{\trs[{(\dproj(\tilde{X}))}]}]) \leq B & \quad\text{(same incoherence bound)}, \label{eq:sameinc}\\
    \fro{\dproj(\tilde{X}) -\mstar} \leq \fro{\tilde{X}-\mstar}  & \quad\text{(non-expansiveness)}, 
    \label{eq:nonexpa}
\end{align}
as long as $\max(\sqrt{m}\|X\|_{2,\infty}, \sqrt{n}\|\trs[X]\|_{2,\infty}) \leq B$. 
\end{proposition}

The following remark is about \cref{lemm:f-rsc-fort} to the
incoherence-restricted region $\nmg[{C_B,\mu}]$.

\begin{coro}
\label{thm:main-mc}    
Under the same assumptions of \cref{prop:mc-rip}: there exist constants $C_0$, $\cst$, $C_B$ such that for a sample size~\eqref{eq:mc-samplesize}, and an initial point $X_{0}\in\nmg[{C_B,\mu}]$~\eqref{def:nm-sl15}, 
the normalized RGD \eqref{eq:rgdproj-pre}--\eqref{eq:rgdproj-rule} ensures a sequence $
\{X_t\}_{t\geq 0}$ that converges to $\mstar$ with a linear rate. 
\end{coro}

This result for matrix completion, similar to the previous work, is based on a lemma (\Cref{lemm:mc-proj}) that ensures the RPD property. 
The difference, however, is that the convergence result in this subsection is built upon the generalized convergence result in \Cref{ssec-ropt-mk:convana}.

\section{Numerical experiments}\label{sec-mtns20:exp}

In this section, we conduct experiments to test the main algorithms (Section~\ref{ssec:opt-manf-via-mtotal}) on compressed sensing and matrix completion, which correspond to~\eqref{prog:main-mk} with the objective function~\eqref{eq:def-f-cs} and \eqref{eq:def-fit-mc} respectively. 
Below is a list of all tested algorithms. 

The main algorithms: the RGD (\Cref{algo-throptmk:generic-rgd}) is labeled as Qprecon~RGD. The term `Qprecon' signifies the preconditioned metric~\eqref{eq-throptmk:def-metric-precon} used in \Cref{algo-throptmk:generic-rgd} that induces the quotient metric on $\mrkk$. %
Similarly, the RCG algorithm (Section~\ref{ssec:alg-rcg}) is labeled as Qprecon~RCG. %
The trial stepsize (\Cref{algo-throptmk:generic-rgd}, line~\ref{algline:qprecon-rgd-stepsize}) is computed using (i) exact line minimization (linemin), 
(ii) the Armijo linesearch method (Armijo), and (iii) the Riemannian Barzilai--Borwein (RBB) stepsize~\eqref{eq-throptmk:stepsize_bb}. %
The RBB stepsize rule is also tested directly without backtracking linesearch; the underlying rule is labeled by `(RBB nols)'. 
 
The Euclidean algorithms: Euclidean gradient descent (Euclidean~GD) and nonlinear conjugate gradient (Euclidean~CG) refer to the GD and CG algorithms on the product space $\mtotal$ using the Euclidean gradients. %
The stepsize selection rules are the same as the main algorithms in Section~\ref{ssec:opt-manf-via-mtotal}. The Euclidean algorithms are implemented in the same package as the main algorithms. 

Existing algorithms on $\mrkk$: LRGeomCG~\cite{Vandereycken2013b}, NIHT and CGIHT~\cite{wei2016guarantees}, and ASD and Scaled-ASD~\cite{tanner2016low} are tested. %

The computation environment of all algorithms above is MATLAB. Given that $(m+n)\rkval\ll |\Omega|=\rho mn$ in low-rank problems, the dominant cost in all algorithms is the computation of the $m\times n$ residual matrix $S= P_{\Omega}(X - \mstar)$, %
which costs $O(|\Omega|\rkval)$. The computation of $S$ in all algorithms is implemented in MEX functions.

All numerical experiments were performed on a workstation with 8-core Intel Core 549 i7-4790 CPUs and 32GB of memory running Ubuntu 16.04 and MATLAB R2015. 
The source code is available at \url{https://gitlab.com/shuyudong.x11/ropt_mk/}.
Implementations of the existing algorithms are also publicly available.

\subsection{Compressed sensing} 

The sensing operator $\Phi:\matsp\to\reals^p$ in~\eqref{eq:def-f-cs} is
represented by a matrix $\hat{\Phi}:\reals^{p\times mn}$ in which each row is
an i.i.d. Gaussian vector: $[\hat{\Phi}]_{ij} \sim \mc{N}(0,1)$ for $1\leq i
\leq p$ and $1\leq j\leq mn$. The sensing operator $\Phi$ is defined as
$\Phi(X) = \hat{\Phi}(\tvec[X])$. 
In the following test, the dimensions of the problem are $m=100$, $n=100$, and
$p/mn=0.25$. We generate a low-rank matrix $\mstar=G^\star \trs[H^\star]$, where
$G^\star$ and $H^\star$ are thin Gaussian matrices of size $m\times r$ and
$n\times r$ respectively, with $r=5$. The observed vector from the sensing
operator is $b^{\star} = \Phi(\mstar)\in\reals^{p}$. 
We test the algorithms using the initial point $X_0:= \Phi^{*} (b^\star)$. 
The performances of Qprecon~RGD (Armijo and RBB) and Euclidean GD (Armijo) are shown in \cref{fig:exp-cs-rperi}. %

\begin{figure}[htpb]
    \centering
        \includegraphics[width=.42\textwidth]{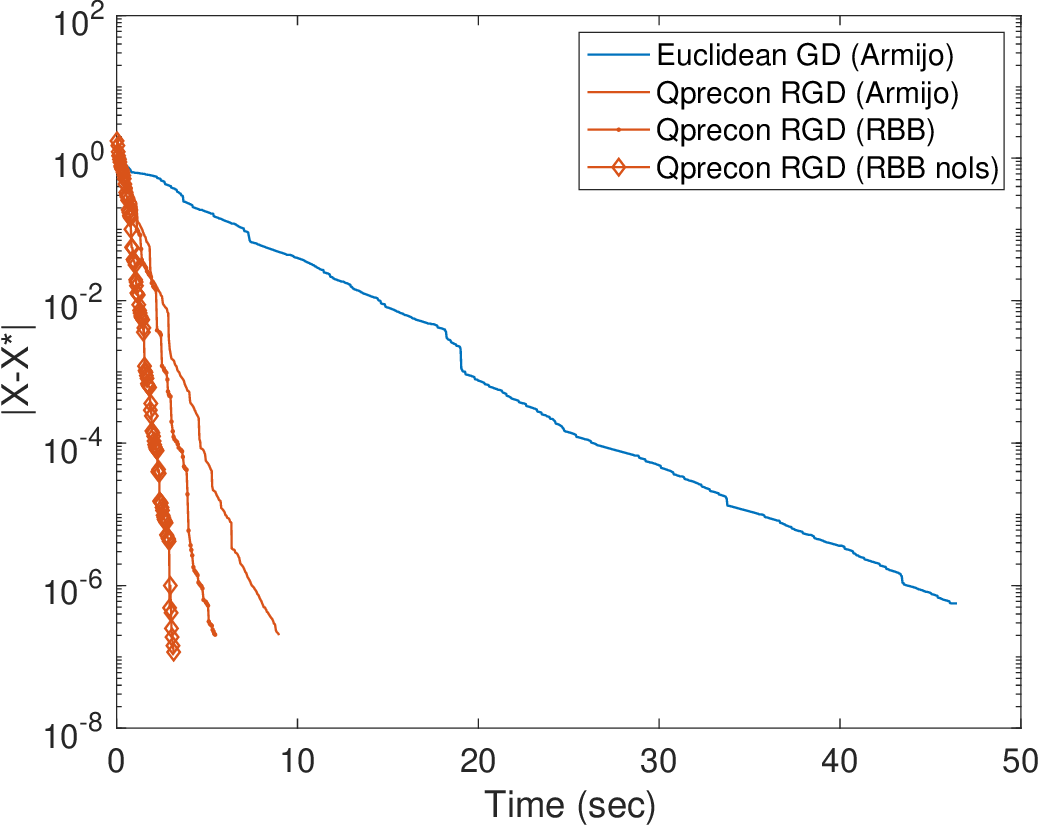}\qquad\quad
        \includegraphics[width=.42\textwidth]{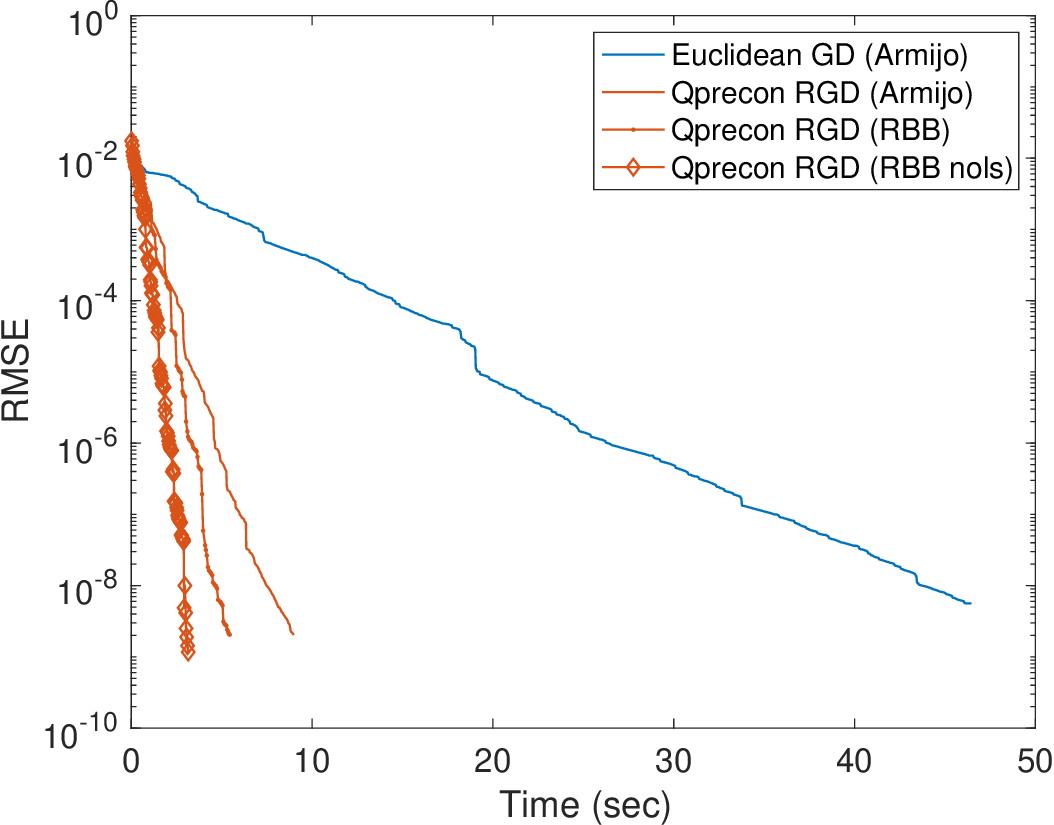}
    \caption{Compressed sensing results. }
    \label{fig:exp-cs-rperi}
\end{figure}

We observe from \cref{fig:exp-cs-rperi} that Qprecon~RGD, with both the Armijo and RBB rules, are faster than Euclidean GD by orders of magnitude. 
From the iteration history in~\cref{fig:exp-cs-rperi}, we also observe that the decay of the residual $\fro{X_{t} - \mstar}$ is linear, which agrees with the result given in \cref{lemm:f-rsc-fort}.

In particular, the RBB stepsize rule~\eqref{eq-throptmk:stepsize_bb} is tested both with and without backtracking linesearch under the same experimental setting. %
\cref{fig:exp-cs-bb} shows the iteration history of the gradient norms and stepsizes of these two variants (RBB and RBB nols). %
In \cref{fig:exp-cs-bb}, %
$\text{SS}=\tmet[x][t](\bar{z}_{t-1},\bar{z}_{t-1})$, 
$\text{SY}= \tmet[x][t](\bar{z}_{t-1},\bar{y}_{t-1})$, and 
$\text{YY}=\tmet[x][t](\bar{y}_{t-1},\bar{y}_{t-1})$ are the values appearing in the RBB formula~\eqref{eq-throptmk:stepsize_bb}. %

\begin{figure}[htbp]
    \centering
\subfigure[with backtracking linesearch]{\includegraphics[width=.42\textwidth]{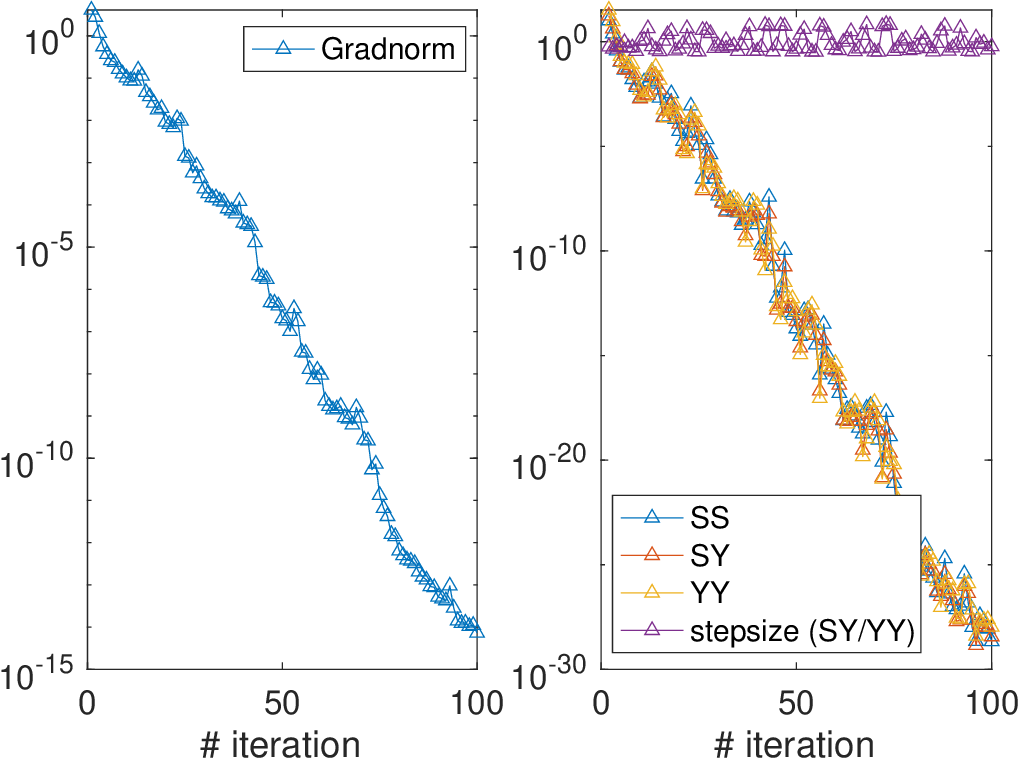}} 
\qquad\quad 
\subfigure[without backtracking linesearch]{\includegraphics[width=.42\textwidth]{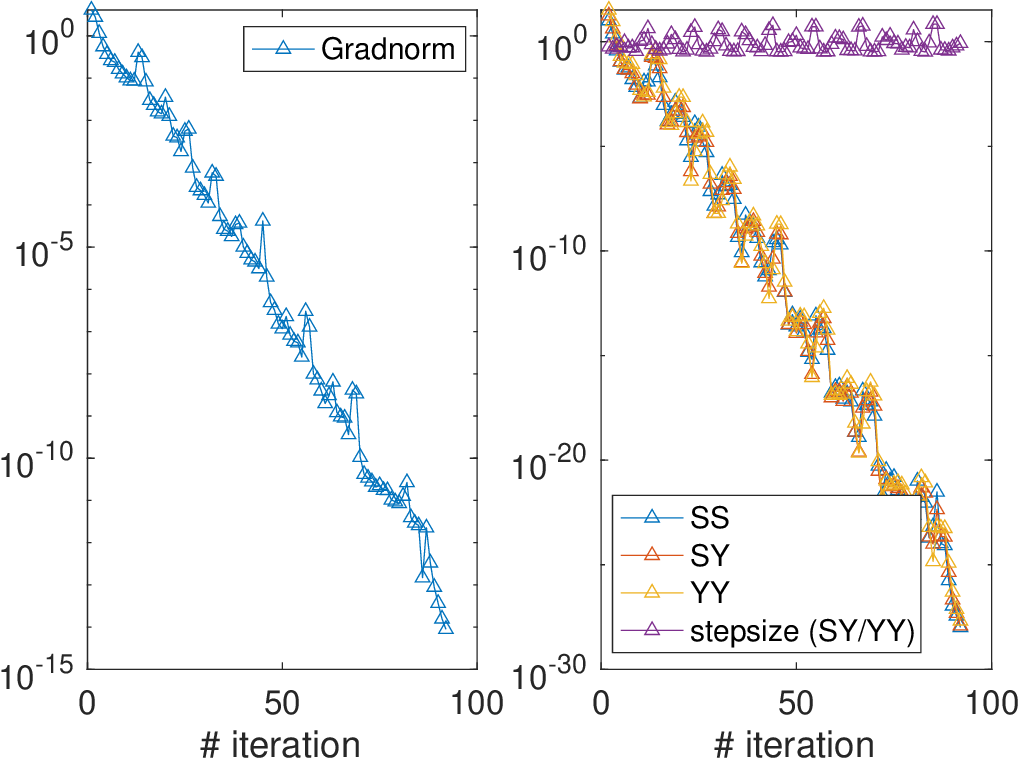}} 
\caption{Gradient and stepsize information of Qprecon RGD (RBB). }
    \label{fig:exp-cs-bb}
\end{figure}

We observe from \cref{fig:exp-cs-bb} that the decay of the gradients' norm (`Gradnorm') is linear, and the RBB stepsizes (`SY/YY', in purple) are rather stable, fluctuating around $O(1)$ with small variation. The RBB stepizes (both with and without linesearch) that enable the convergence agree with the characterization in \Cref{lemm:f-rsc-fort} (stating that there exists $\ssz^*>0$ such that the algorithm converges with a linear rate, for all stepsizes $\theta_t< \ssz^*$). 

\subsection{Matrix completion}\label{sec:exp-invpath}

In the following experiments, the partially observed matrix $\mstar\in\vecsp$ is generated as follows: 
for a rank parameter $\rkval\ll \min(m,n)$, $\mstar = A^{\star} B^{\star}$, 
where $A\in\reals^{m\times \rkval}$ and $B\in\reals^{m\times \rkval}$ are
composed of entries drawn from the Gaussian distribution $\mc{N}(0,1)$.  
The index set of the observed entries (training data) consists of indices sampled from the Bernoulli distribution $\mc{B}(p)$: for $(i,j)\in\iset{m}\times\iset{n}$, 
\begin{equation}\label{def:Omega-bernoulli}
    (i,j)\in\Omega, \mathrm{~with~probability~} p.
\end{equation}

\paragraph{Optimization paths on \texorpdfstring{$\mrkk$}{Mk}}\label{parag:exp-invpath}
\cref{lemm:rgd-mtotal2manf} shows that Qprecon~RGD (\Cref{algo-throptmk:generic-rgd}), by producing a sequence $\algseq\subset\mtotal$, induces a sequence $\indseq$ on $\mrkk$ along the negative Riemannian gradients in $\tansp[]\mrkk$. This means that the sequence does not depend on the locations of the iterates $\algseq$ in the vertical space $\tversp$. 
To demonstrate this property, we observe next the iteration histories and paths of the induced sequences of Qprecon~RGD and Euclidean GD on $\mrkk$.

\begin{figure}[htbp]
\centering
\subfigure[Armijo line search]{\includegraphics[width=.43\textwidth]{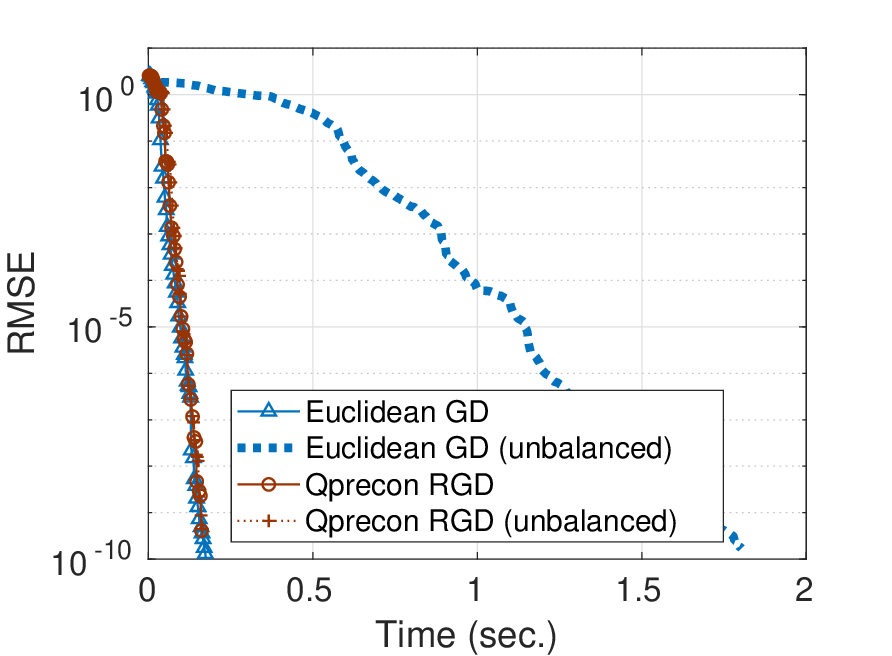}} \qquad 
\subfigure[Line minimization]{\includegraphics[width=.43\textwidth]{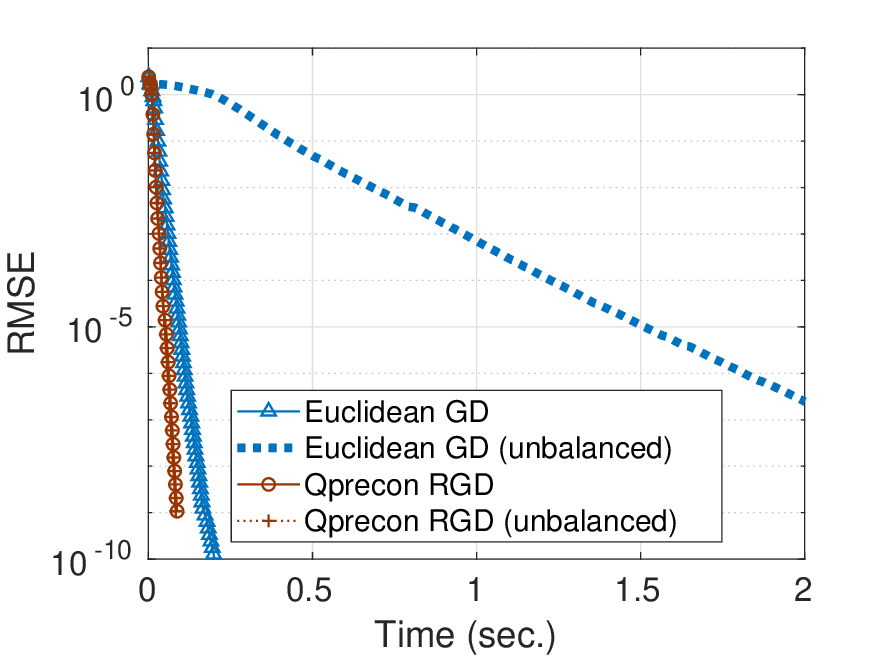}}\\ 
\subfigure[Balanced]{\includegraphics[width=.43\textwidth]{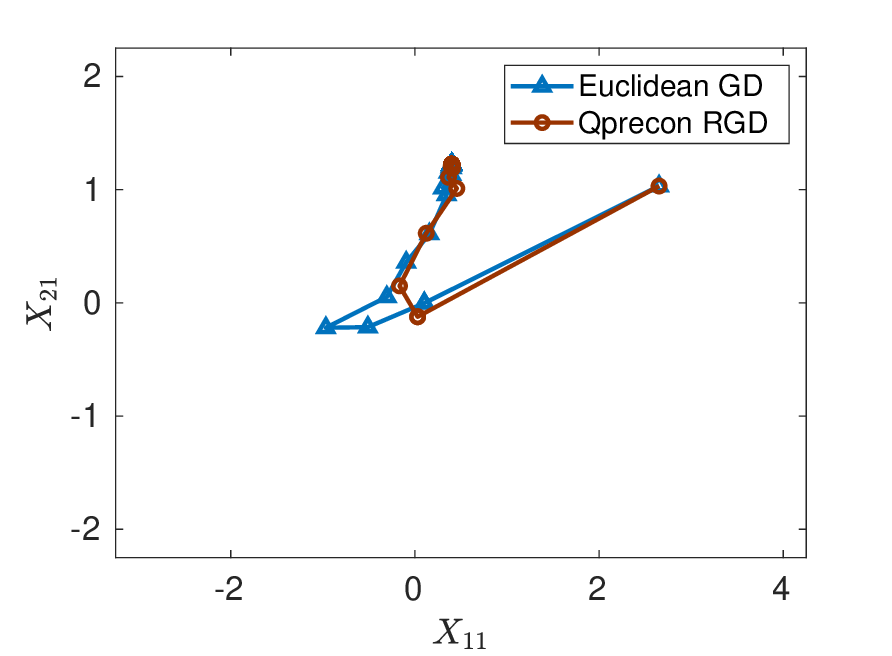}}\qquad 
\subfigure[Unbalanced]{\includegraphics[width=.43\textwidth]{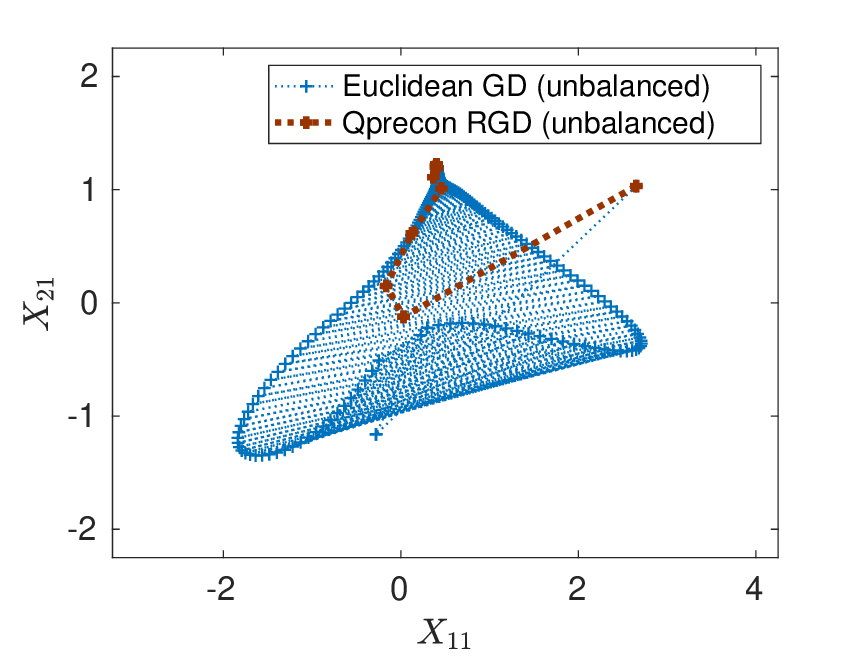}} 
\caption{Iteration histories of the algorithms with a balanced and an unbalanced
initial point. The size of $\mstar$ is $100\times200$, with rank $\rstar=3$. The
sampling rate $p=0.8$.
(a)--(b): test RMSEs by time. (c)--(d): paths of the matrix entries $([X_t]_{1,1}, [X_t]_{2,1})$ of the iterates $\{\pi(\bar{x}_{t})\}$.}
\label{fig:cropt-mk-invpath}
\end{figure}

The matrix completion tests are conducted on noiseless observations of $\mstar$ on $\Omega$~\eqref{def:Omega-bernoulli} with a sampling rate $p=0.8$. 
Both Qprecon~RGD and Euclidean GD are initialized with the same initial point in the following two settings: %
(i) the {\it balanced} setting: each algorithm is initialized with $x_0 = (G_0,H_0)$ using the spectral initialization method such that $\|G_0\|=\|H_0\|$, and (ii) the {\it unbalanced} setting: the initial point is defined as 
$x_0' = (\lambda G_0, H_0/\lambda)$ for $\lambda = 5$, where $G_0$ and $H_0$ are the same as in the balanced setting. The comparative results are given in Figure~\ref{fig:cropt-mk-invpath}.

From \Cref{fig:cropt-mk-invpath}, we observe that the two sequences of Qprecon~RGD overlap, which shows indeed that the path of the sequence generated by Qprecon~RGD does not vary with the change in the initial point. We also observe that these overlapping sequences converge linearly, with much faster speed than Euclidean GD with the unbalanced initial point. 
In fact, one can see from the figure that the convergence of Euclidean GD is
significantly slowed down with the unbalanced initial point $x_0'$ compared to
the case with $x_0$.

\paragraph{Comparisons with existing algorithms}\label{ssec-mtns20:rperi-comparisons}
We compare the performances of Qprecon RGD and Qprecon RCG with the other listed algorithms~\cite{Vandereycken2013b,wei2016guarantees,tanner2016low}.
More precisely, LRGeomCG~\cite{Vandereycken2013b}, NIHT and CGIHT~\cite{wei2016guarantees} are based on the same manifold structure of $\mrkk$ and the same retraction operator. 
These algorithms produce iterates %
on the product space
$\stie(m,\rkval)\times\mathcal{S}_{++}(\rkval)\times \stie(n,\rkval)$ using
gradients defined with the Euclidean metric. A retraction is needed to ensure the fixed-rank constraint, %
for which the projection-like retraction~\cite{Absil2012} (also called the IHT for ``iterative hard thresholding'') is used. %
ASD and ScaledASD~\cite{tanner2016low} use alternating steepest
descent on $\prodspstar$. %

\begin{figure}[htbp]
\centering
\subfigure[$\rstar=10$]{\includegraphics[width=.31\textwidth]{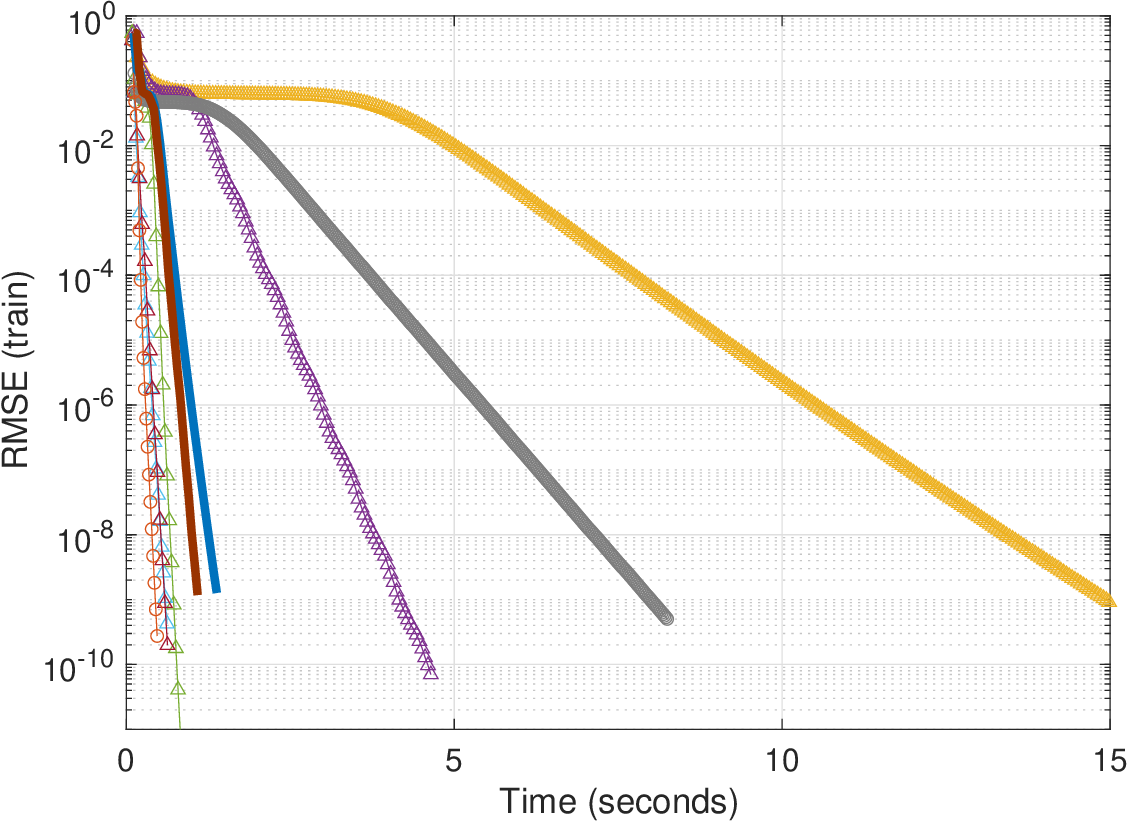}}
~
\subfigure[$\rstar=20$]{\includegraphics[width=.32\textwidth]{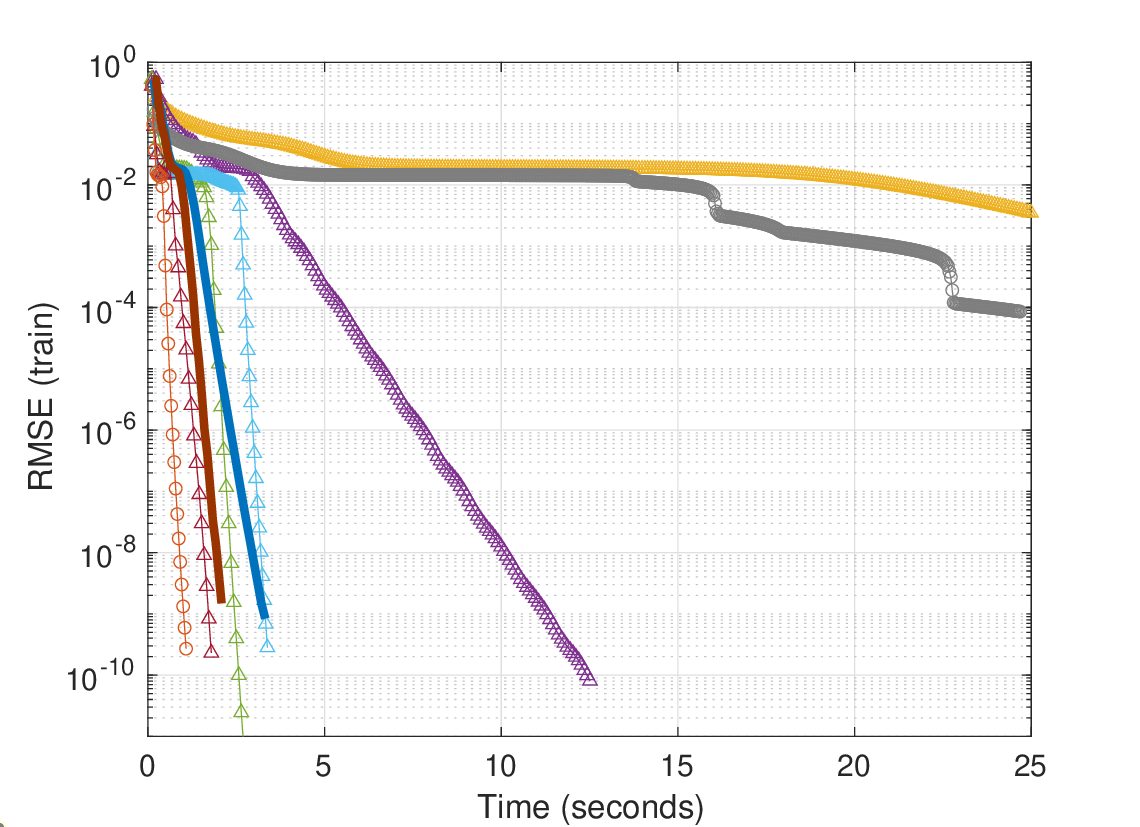}} 
~
\subfigure[$\rstar=30$]{\includegraphics[width=.32\textwidth]{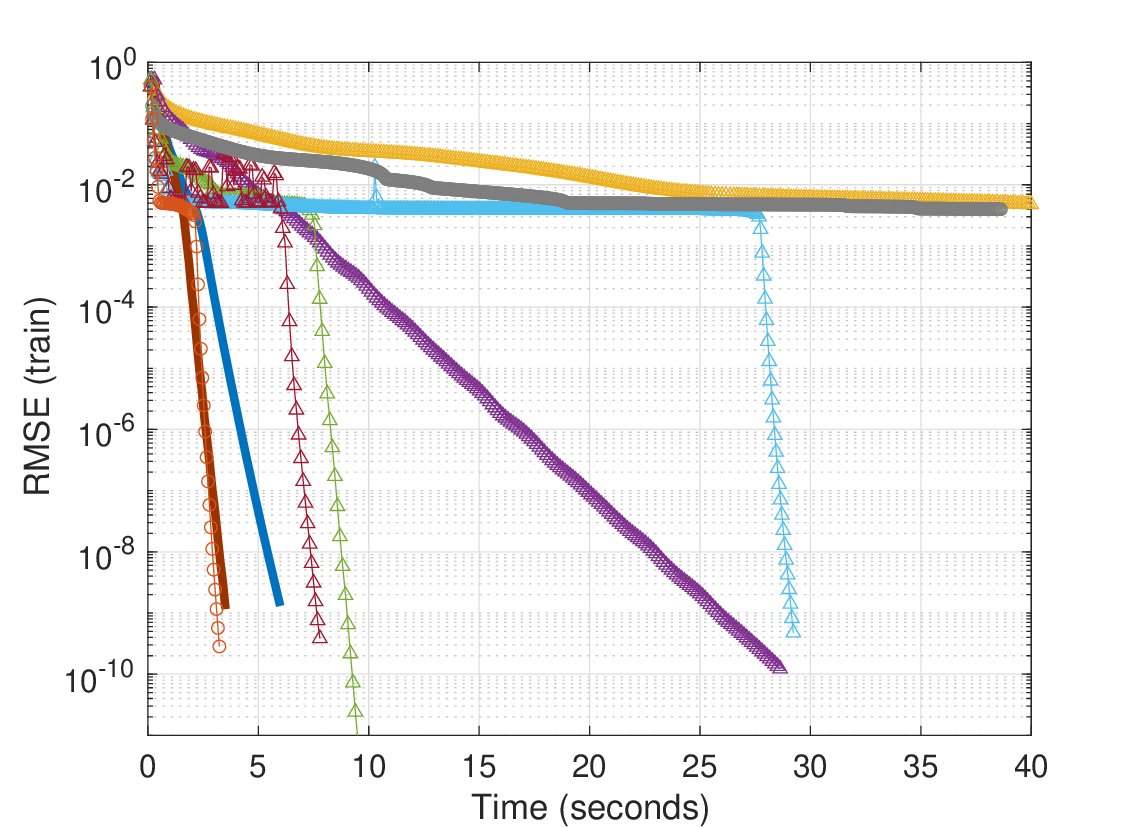}} 
\\
\subfigure[legend of (a-c)]{\includegraphics[width=.29\textwidth]{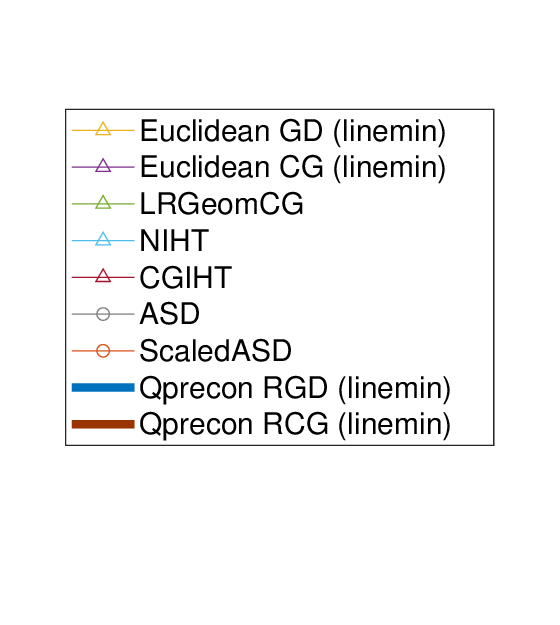}} 
\qquad\qquad  
\subfigure[Average time per-iteration]{\includegraphics[width=.38\textwidth]{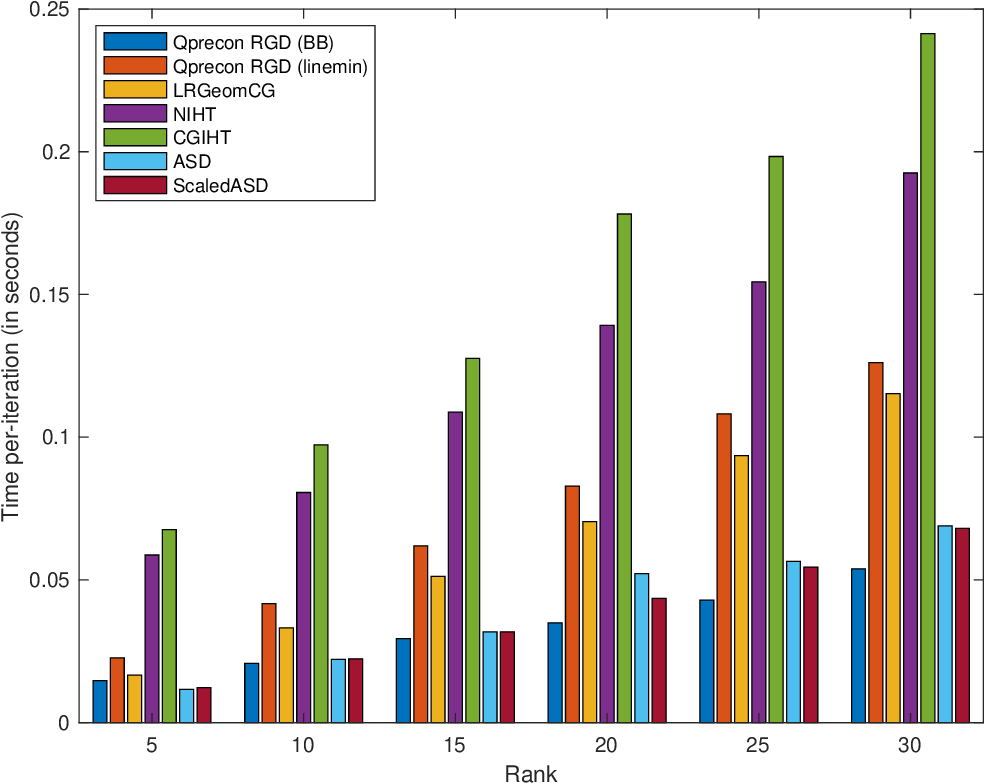}}
\caption{Performances of algorithms on matrix completion. The matrix size is $800\times900$ with sampling rate $p=0.6$ for all tests. The rank parameter $\rkval=\rstar$ in each setting. %
(a-d): iteration history under rank values $\rstar\in\{10, 20\}$. %
(e): average time per-iteration with ranks varying in $\{5, 10, \dots, 30\}$. 
}\label{fig:eid23mar-n0rstar-1}
\end{figure}

In \Cref{fig:eid23mar-n0rstar-1}, we show the performances of the algorithms in recovering low-rank matrices from the same matrix model as the previous experiment. %
From \Cref{fig:eid23mar-n0rstar-1}~(a-d), we observe that our algorithms (Qprecon~RGD and Qprecon~RCG) %
perform similarly as ScaledASD and they are all either faster than or comparable to
the rest of the algorithms under all three different rank values. %
We also observe that algorithms using Riemmaian geometry on $\mrkk$---ours, scaledASD, LRGeomCG, and NIHT/CGIHT---outperform those (Euclidean GD/CG, ASD) based on Euclidean geometry, with only one exception, where NIHT attained stationarity a bit later than Euclidean CG when $r^\star=30$. In particular, when $r^\star=10$ and all algorithms attain stationarity, both our algorithms, scaledASD and LRGeomCG achieve a speedup of around $10$ times over Euclidean GD; and when $r^\star \in \{20,30\}$, Euclidean GD and ASD exceeded the maximal time budget before attaining the target stationarity precision. 
From \Cref{fig:eid23mar-n0rstar-1} (e), we observe that Qprecon~RGD (RBB), ASD and Scaled ASD are the three most efficient algorithms in terms of per-iteration cost, with all different rank values varying in $\{5, 10, \dots, 30\}$. In particular, these three algorithms have better scalability in the rank $\rkval$ than the rest of the algorithms.

\section{Conclusion}\label{sec-throptmk:conclusion}
We investigated methods for the matrix completion problem with a fixed-rank
constraint and focused on a Riemannian gradient descent algorithm in the framework of
optimization on the quotient manifold of fixed-rank matrices.
We showed that the Riemannian gradient descent algorithm under the aforementioned quotient geometric setting not only enjoys the advantage of matrix factorization but can also be analyzed in a more convenient way than existing matrix factorization methods.
We developed novel results for analyzing the quotient
manifold-based algorithm and proved that this algorithm solves the fixed-rank matrix completion
problem with a linear convergence rate. Moreover, the convergence property of
the algorithm has desirable invariance properties in contrast to Euclidean
gradient descent algorithms. 
Because of the efficient iteration efficiency and its light per-iteration cost, the time efficiency of this algorithm is also shown to be much faster than the Euclidean gradient descent algorithms and is faster than many other Riemannian algorithms on the set of fixed-rank matrices.
Through the convergence analysis, we also provided a novel understanding of the
graph-based regularization in the theoretical framework of matrix completion.


\section*{Acknowledgments}
We gratefully acknowledge the valuable comments and suggestions from {P.-A.} Absil during the development of the main results of this paper. %

\section*{Funding}
This work was supported by the Fonds de
la Recherche Scientifique -- FNRS and the Fonds Wetenschappelijk Onderzoek --
Vlaanderen under EOS Project no.\ 30468160 \revj{(SeLMA -- Structured low-rank matrix/tensor
approximation: numerical optimization-based algorithms and applications)}. The
first author was supported by the FNRS--FRIA scholarship at UCLouvain, Belgium during this work. %
Bin Gao was supported by the Deutsche Forschungsgemeinschaft (DFG, German Research Foundation) via the collaborative research centre 1450--431460824, InSight, University of M\"unster, and via Germany's Excellence Strategy EXC 2044--390685587, Mathematics M\"unster: Dynamics--Geometry--Structure.

\appendix
\section{Proofs in \Cref{sec:opt-on-mk}} 

\begin{proof}[{\bf Proof of \Cref{prop:invariance-metric-precon}.}] 
    For any $\bar{x}:=(G,H)\in\mtotal$ and $\bar{x}'\sim\bar{x}$, there exists an invertible matrix
    $F\in\glin[\rkval]$, such that $\bar{x}'=(G\trs[F], HF^{-1})$. 
    Since the tangent vector $\ttanv[\xi']$ %
    must satisfy 
    $\dop\pi(\bar{x}')[\ttanv[\xi']] = \dop\pi(\bar{x})[\ttanv]$, and
    note that $\dop\pi(\bar{x})[\ttanv]=G \ttanvr + \ttanvl \trs[H]$ for $\bar{x}=(G,H)$, $\ttanv'$ and $F$ must satisfy 
    \[G \ttanvr + \ttanvl \trs[H] = G\trs[F] \ttanvr[\xi']  +
    \ttanvl[\xi'] \trs[(HF^{-1})],\]
    for any $\ttanv\in\ttansp$, which yields 
    \[ \ttanvl[\xi'] = \ttanvl \trs[F]\mathrm{~and~}  \bar{\xi'}^{(2)} = {\bar{\xi}}^{(2)} F^{-1}.
    \]
    The same relation holds for $\ttanv[\eta], \ttanv[\eta']$ and $F$. 
    Applying these equalities into the expression of 
    $\bar{g}_{\bar{x}'}(\ttanv',\ttanv[\eta]')$, where $\bar{g}$ is the
    preconditioned metric~\eqref{eq-throptmk:def-metric-precon}, we recover immediately the expression of $\bar{g}_{\bar{x}}(\ttanv,\ttanv[\eta])$. 
\end{proof}

\section{Proofs in \Cref{sec-ropt-mk:lrqp}}

\begin{proof}[{\bf Proof of \Cref{prop:rgrad-total2quot}.}] 
Let $\trgrad=(\ttanvl[\eta], \bar{\eta}^{(2)})$ denote the two components of
the gradient~\eqref{eq-throptmk:rgrad-man-precon} of $\bar{f}$,  
where $\bar{x}:=(G,H)\in\pi^{-1}(X)$ given $X=U\Sigma\trs[V]\in\mrkk$. 
We have 
 	\[
 	\ttanvl[\eta]=\tfpartg(\trs[H] H)^{-1}\text{~and~}
    \bar{\eta}^{(2)}=\tfparth(\trs[G] G)^{-1},
 	\] where
 	$\tfpartg = \nabla f(X)H$ and $\tfparth=\trs[\nabla f(X)] G$. 
    \bleu{Since $\trgrad$ is the horizontal lift of $\rgrad$, by definition,
    $\rgrad=\dop\pi(\bar{x})[\trgrad]$.} 
    Therefore, we have  
\begin{align*}%
 	\rgrad&= \dop\pi(\bar{x})[\trgrad] 
 		= G\ttanvr[\eta] + \ttanvl[\eta]\trs[H]\\ 
        &= G(\trs[G] G)^{-1}\trs[G]{\nabla f(X)}+ \nabla f(X) H(\trs[H] H)^{-1}H\\
 		&= \pu\nabla f(X) + \nabla f(X)\pu[V],
\end{align*}
where $\pu$ and $\pu[V]$ are the matrices defined in the statement.
\end{proof}

\begin{proof}[{\bf Proof of \Cref{lemm:rgd-mtotal2manf}.}] 
To simplify the notations, we omit the subscript $t$ of all terms related to the
$t$-th iterate ($G_{t}$, ${\ttanv[\eta]}_{t}$, stepsize $\ssz_t$) and denote
$X_{t+1}$ by $X_{+}$ in the following equations. Let
${\ttanv[\eta]}:=\trgrad[x][]$ denote the Riemannian gradient of $\bar{f}$ on
the current iterate $\bar{x}:=(G,H)$. 
From the update step in Algorithm~\ref{algo-throptmk:generic-rgd}, we have
\begin{align}%
X^{+} &:=\pi(\bar{x}-\ssz\trgrad) %
 = \pi(\bar{x}) - \ssz\left(G \ttanvr[\eta] + \ttanvl[\eta] \trs[H]\right) +
    \ssz^{2}\ttanvl[\eta] \ttanvr[\eta]\notag~\\
 &= X -\ssz \rgrad + \ssz^{2}\partial_{G}\bar{f}(\bar{x})(\trs[H]H)^{-1}
    (\trs[G] G)^{-1} \trs[(\partial_{H}\bar{f}(\bar{x}))]\label{eq:prf-rgd-mtotal2manf-l1}\\
 &= X -\ssz \rgrad + \ssz^{2}\underbrace{\nabla f(X)H(\trs[H]H)^{-1}
    (\trs[G] G)^{-1} \trs[G] \nabla f(X)}_{\phi_1}\label{eq:l4-rgd-mtotal2manf}~\\
 &=  X -\ssz \rgrad + \ssz^{2}\nabla f(X)X^{\dagger}\nabla f(X),\label{eq:prf-rgd-mtotal2manf-lend}
\end{align}
which proves~\eqref{eq:lemm-rgd-mtotal2manf}. 
The equation~\ref{eq:prf-rgd-mtotal2manf-l1} is obtained by identifying 
$(G\ttanvr[\eta]+ \ttanvl[\eta] \trs[H])$ with 
\[
\dop\pi[(G,H)](\ttanv[\eta]):=\dop\pi[(G,H)](\trgrad)=\rgrad[X].
\] 
The equation~\eqref{eq:prf-rgd-mtotal2manf-lend} is obtained as follows. Let
$X:=U\Sigma \trs[V]$ denote the $\svd$ of $X\in\mrkk$, where where
$U\in\stie(m,\rkval)$ and $V\in\stie(n,\rkval)$ and
$\Sigma\in\reals^{\rkval\times\rkval}$. 
Then we use the fact that there
exists $F\in\glin[\rkval]$, for any $(G,H)\in\pi^{-1}(X)$, such that
$G=U\Sigma_{G}\trs[F]$ and $H=V\Sigma_{H}F^{-1}$, where $\Sigma_{G}$ and $\Sigma_{H}$ are $\rkval\times \rkval$ diagonal matrices such that $\Sigma_{G}\Sigma_{H} = \Sigma$. Therefore, $\phi_1$ in the last term of the right-hand side of~\eqref{eq:l4-rgd-mtotal2manf} reads:
\begin{align*}
\phi_1 & =  \nabla
f(X)V\Sigma_{H}F^{-1}(F\Sigma_{H}^{-2}\trs[F])(F^{-T}\Sigma_{G}^{-2}F^{-1})F\Sigma_{G}\trs[U]\nabla f(X)~\\
&= \nabla f(X)V\Sigma_{H}(\Sigma_{H}^{-2}\Sigma_{G}^{-2})\Sigma_{G}\trs[U] \nabla f(X)~\\
&= \nabla f(X)V\Sigma_{H}^{-1}\Sigma_{G}^{-1}\trs[U] \nabla f(X)~\\
&= \nabla f(X)X^{\dagger}\nabla f(X).
\end{align*}
\end{proof}

\begin{proof}[{\bf Proof of \Cref{lemm:opvx-props}.}] 
(i): From the definition~\eqref{eq:ov-egrad2rgrad}, 
$\opvx(Z)=\pu(Z)+ \pu[V](Z)$, we deduce 
that $\opvx$ is a symmetric operator, since the orthogonal projections $\pu(\cdot)$ and $\pu[V](\cdot)$ are symmetric operators. We prove the remaining claims as follows. 
From~\eqref{eq:rmk-rgrad-vs-proj} in \cref{rmk:ov-egrad2rgrad}, 
we deduce that $\opvx(Z)=\proj_{\tansp[X]\mrkk}(Z) + U\trs[U]Z V\trs[V]=2Z$ if $Z\in\tansp[X]\mrkk$. 
If $Z'\in(\tansp[X]\mrkk)^{\perp}$, then 
there exists $Y\in\vecsp$ such that $Z'=(\id-\proj_{\tansp[X]\mrkk})(Y)$, and consequently 
\begin{align*}
	\opvx[X](Z')&=\proj_{\tansp[Y]\mrkk}((\id-\proj_{\tansp[X]\mrkk})(Y)  )+U\trs[U] ((\id-\proj_{\tansp[X]\mrkk})(Y)) V\trs[V] 
	= 0. 
\end{align*}
Therefore, for any $Z\in\vecsp$, $\opvx(Z)=\opvx(\proj_{\tansp[X]\mrkk}(Z))=2\proj_{\tansp[X]\mrkk}(Z)$, which entails that $0\leq2\fro{\proj_{\tansp[X]\mrkk}(Z)}^2=\braket{\opvx(Z),Z}\leq 2\frobb[Z]$. 

(ii): %
The matrix $Z:=Y-X$ can be decomposed as $Z=\tilde{Z} + \Delta_Z$, where 
$\tilde{Z}:=\proj_{\tansp[X]\mrkk}(Z)$ and %
$\Delta_Z: =(\id-\proj_{\tansp[X]\mrkk})(Z)=(\id-\proj_{\tansp[X]\mrkk})(Y)$, where the last equality holds since $(\id-\proj_{\tansp[X]\mrkk})(X)=0$. %
Therefore, we have 
\begin{align}
&(\opvx-2\id)(X-Y) %
= -(\opvx-2\id)(\tilde{Z}+\Delta_Z)\notag\\
&= (2\id-\opvx)(\Delta_Z)%
= 2\Delta_Z := 2(\id-\proj_{\tansp[X]\mrkk})(Y), \label{eq:iddiff-l2} 
\end{align}
where the equalities in~\eqref{eq:iddiff-l2} are obtained by using the fact that $\tilde{Z}\in\tansp[X]\mrkk$ and $\Delta_{Z}\in(\tansp[X]\mrkk)^{\perp}$ and the properties ${\opvx}_{|\tansp[X]\mrkk}=2\id$ and ${\opvx}_{|(\tansp[X]\mrkk)^{\perp}}=0$, proven in (i). 
\end{proof}

\section{Proofs in \Cref{ssec-ropt-mk:lrqp-intro}} 
\label{sec-app:prf-sec4}

\subsection{Proofs of lemmas in~\Cref{ssec-ropt-mk:convana}}
\label{ssec-ropt-mk:prfs-mainres}

\begin{proof}[{\bf Proof of \cref{lemm:f-hess-pd}.}] 
    \label{parag:prf-hess-pd} 
The Riemannian Hessian of $f$ at $Y\in\mrkk$, by definition~\cite[Proposition 5.5.4]{AbsMahSep2008}, satisfies 
\begin{equation}
    \label{eq:def-hessf-lrqp}
    \braket{\hess f(Y)[Z],Z} = \left.\ddtt(f(\expm_{Y}(tZ)))\right|_{t=0},
\end{equation}
where $\expm_{Y}:\tansp[Y]\mrkk\to\mrkk$ is the exponential map at $Y$. %
We prove the spectral lower bound of $\hess f(\mstar)$ as follows. 
First, the exponential map at $Y:=\mstar$ %
has the following expression (\eg,~\cite{Absil2012},~\cite[Proposition A1]{Vandereycken2013b},~\cite[Appendix A]{10.1093/imanum/drz061}), 
\begin{equation}
\expm_{\mstar}(Z) = \mstar + Z  + \dttx[\mstar](Z), \label{eq:def-expm-mk} %
\end{equation}
where $\dttx[\mstar](Z):= (I-\pu[U^\star])Z {\mstar}^{\dagger} Z(I-\pu[V^{\star}]) + O(\fro{Z}^3)$ 
satisfies $\fro{\dttx[\mstar](Z)}\lesssim \fro{Z}^2$. 
From~\eqref{eq:def-hessf-lrqp}--\eqref{eq:def-expm-mk}, it follows that the quadratic function $f$~\eqref{eq:def-f-lrqp} satisfies, for any $Z\in\tansp[\mstar]\mrkk$, 
\begin{equation}
    \label{eq:hessf-lrqp}
    \braket{\hess f(\mstar)[Z],Z} = \braket{\opa(Z),Z}.
\end{equation}

Next, we bound~\eqref{eq:hessf-lrqp} using %
\Cref{prop-rpd2asp}. Consider $\tilde{f}: X\mapsto f(X) - f(\mstar)$ 
where $f$ is defined in~\eqref{eq:def-f-lrqp}, we have
\begin{align}%
\tilde{f}(X) &= \frac{1}{2} \braket{\opa(X),X} - \braket{\bstar,X} -
f(\mstar)\notag\\
&= \frac{1}{2}\braket{\opa(X),X}-\braket{\opa(\mstar),X}-(-\frac{1}{2}\braket{\opa(\mstar),\mstar})\label{eq:barf-l1}\\
& = \frac{1}{2}\braket{\opa(X-\mstar),X-\mstar}\geq \frac{1-\brip}{2}\fro{X-\mstar}^2, \label{eq:barf-lend} 
\end{align}
where the equality~\eqref{eq:barf-l1} holds due to the fact that $\bstar=\opa(\mstar)$ by definition~\eqref{eq:def-f-lrqp}, the equality in~\eqref{eq:barf-lend} holds since $\opa$ is a symmetric operator, and the last inequality is a direct result of 
\Cref{prop-rpd2asp}.
It follows that, for any $Z\in\tansp[\mstar]\mrkk$ %
and $X:=\expm_{\mstar}(Z)\in\mrkk$, 
\begin{equation}\label{eq:prf-hess-ineq}
 \tilde{f}(\expm_{\mstar}(Z)) \geq \frac{1-\brip}{2}\frobb[\expm_{\mstar}(Z)-\mstar], 
\end{equation} 
which can be rewritten %
via the expression~\eqref{eq:def-expm-mk} of $\expm_{\mstar}(Z)$ as follows, 
\begin{equation}\label{eq:prf-hess-ineq2}
\tilde{f}(\expm_{\mstar}(Z))= \braket{\opa(Z),Z}  +  \varphi_{\tilde{f}}(Z) \geq (1-\brip)\frobb[Z] + \varphi_{F}(Z),
\end{equation}
where $\varphi_{\tilde{f}}(Z)$ and $\varphi_{F}(Z)$ are the sums of third- and
higher-order terms of $Z$ on the two sides of~\eqref{eq:prf-hess-ineq}, %
\ie, %
$|\varphi_{\tilde{f}}(Z)| \leq C_1 \fro{Z}^{3}$ and %
$|\varphi_{F}(Z)| \leq C_2 \fro{Z}^{3}$, 
for constants $C_1>0$ and $C_2>0$. 

Combining~\eqref{eq:prf-hess-ineq2} and~\eqref{eq:hessf-lrqp}, we have, for any $Z\in\tansp[\mstar]\mrkk$, %
\begin{align*}
    \lambda_{f}(Z)&:=\frac{\braket{\hess f(\mstar)[Z],Z}}{\frobb[Z]} = \frac{\braket{\opa(Z),Z}}{\fro{Z}^2} 
    \geq (1-\brip)
    -\frac{|\varphi_{\tilde{f}}(Z)|+|\varphi_{F}(Z)|}{\frobb[Z]} \notag\\
    &\geq (1-\brip) - \left(C_1+C_2\right)\fro{Z}, %
\end{align*}
which entails \eqref{eq:ineq-hess-lrqp}: 
\begin{align*}
  \lambda_{f}(Z) 
    &\geq \sup\limits_{Z\in\tansp[\xstar]\mrkk}
    \left(1-\brip-(C_1+C_2)\fro{Z}\right)=1-\brip, 
\end{align*}
where the supremum of the right-hand side is obtained with $W\in\tansp[\mstar]\mrkk$ in a certain direction, such that $\fro{W}\to 0$. 
\end{proof}

\begin{proof}[{\bf Proof of \cref{thm:main-generic}.}] 
Under the $(\brip,2k)$-RPD property, 
the result of \cref{lemm:f-hess-pd} holds, \ie, the Riemannian Hessian of $f$ at $\mstar$ is positive definite; see~\eqref{eq:ineq-hess-lrqp}. 
Therefore, according to~\cite[Theorem 4.5.6]{AbsMahSep2008}, if $\indseq$ converges to $\mstar$, the convergence rate of $\indseq$ is linear.  
\end{proof}

\begin{proof}[{\bf Proof of \cref{lemm:f-lip-muu}.}] 
~
    \begin{itemize}
        \item[(i)]    
The inequality~\eqref{eqs:f-lipschitz-rsc-a} holds since
$f$~\eqref{eq:def-f-lrqp} has Lipschitz-continuous gradient in $\vecsp$, which
is true since it is twice-differentiable and is composed of quadratic terms.
The constant $\lip=\sqrt{1+\brip}$ when $\opa$ is a projection (\eg, \cite{5730578}). 
The existence of constant $\rlip>0$ is dedueced from \cite[Lemma~2.7 and Theorem~2.8]{Boumal2018}. 

\item[(ii)] 
\redl{
Since $f$ is a quadratic function, it is strongly convex on $\reals^{m\times
n}$: there exists $\mu_f>0$ such that for any $X,Y \in \matsp$,
    $$ f(Y) - f(X) \geq \braket{\nabla f(X), Y-X} + \frac{\mu_f}{2} \fro{Y-X}^2.$$
By setting $Y:=\mstar$, the above inequality can be re-written as 
    \begin{align}
        \label{eq:4dot7-pre}
    f(X) - f(\mstar) \leq \braket{\nabla f(X), X-\mstar} - \frac{\mu_f}{2} \fro{X-\mstar}^2.
    \end{align}
Next, we prove an inequality similar to the above result but with the term
$\nabla f(X)$ replaced by the Riemannian gradient $\grad f(X)$. The right
hand-side (RHS) of \eqref{eq:4dot7-pre} can be written as  
    \begin{align*}
     \text{RHS} 
     & = \braket{\grad f(X), X- \mstar} - \frac{\mu_f}{2} \fro{X-\mstar}^2 -
         \braket{\grad f(X)- \nabla f(X), X-\mstar } \\ 
     & = \braket{\grad f(X), X-\mstar } - \frac{\mu_f}{2} \fro{X-\mstar}^2 -
     \underbrace{ \braket{(\opvx-\id)(\nabla f(X)), X- \mstar} }_{\alpha_1}, 
    \end{align*}
    where the third term above with $\opvx(\cdot)$ is a result of~\eqref{eq:rgrad-as-pegrad} in \Cref{prop:rgrad-total2quot}. 

It remains to prove that $\alpha_1 \geq 0$ under the $\dnm_0$-closeness condition: the term $\alpha_1$ satisfies the following property, given that $\nabla f(X) = \opa(X-\mstar)$, 
\begin{align*}
        \alpha_1 
        & = \braket{ (\opvx-\id)(\opa(X-\mstar)), X-\mstar } 
        = \braket{ \opa(X-\mstar), (\opvx-\id)(X-\mstar) } \\
        & = \braket{ \opa(X-\mstar), (\opvx-2\id)(X-\mstar) } + \braket{\opa(X-\mstar), X-\mstar} \\
        & = 2 \braket{ \opa(X-\mstar), (I-\proj_{\tansp[X]\mrkk})(\mstar)} + \braket{\opa(X-\mstar), X-\mstar}, 
\end{align*}
where the last equality is due to the property of the operator $\opvx$ (\Cref{lemm:opvx-props}) that
        $ (\opvx-2I)(X-\mstar) = 2(I-\proj_{\tansp[X]\mrkk})(\mstar)$. 
We proceed by evoking the following lemma about the Frobenius norm of $(I-\proj_{\tansp[X]\mrkk})(\mstar)$: 
\begin{lemma}[{\cite[Lemma 4.1]{wei2016guarantees}}]
\label{lemm:wei2016-lemm4.1}
Let $X$ and $Y$ be two matrices in $\mrkk$. %
Then it holds that 
\begin{equation}
    \label{eq:lemm41-wei2016}
    \fro{(\id-\proj_{\tansp[X]\mrkk})(Y)} \leq \frac{1}{\sigma_k(Y)}\frob[Y-X]^2.
\end{equation}
\end{lemma}
Consequently, $\alpha_1$ is lower-bounded as follows: 
\begin{align*}
        \alpha_1 
        & = 2 \braket{ \opa(X-\mstar), (I-\proj_{\tansp[X]\mrkk})(\mstar)} +
        \braket{\opa(X-\mstar), X-\mstar} \\ 
        & \geq -2 |\braket{ \opa(X-\mstar), (I-\proj_{\tansp[X]\mrkk})(\mstar)}| + (1-\beta)\fro{X-\mstar}^2 \\
        & \geq -2\fro{\opa(X-\mstar)}\fro{ (I-\proj_{\tansp[X]\mrkk})(\mstar) } + (1-\beta)\fro{X-\mstar}^2 \\ 
        & \geq -\frac{2 \lip}{\sigmin} \fro{X-\mstar}^3 + (1-\beta)\fro{X-\mstar}^2, 
\end{align*}
where the $(1-\brip)$ term in the first inequality is due to the RPD property, and the first term in the last inequality is a result of (a) Lipschitz-smoothness of $f$ (\Cref{lemm:f-lip-muu} (i)), \ie, $\fro{\opa(X-\mstar)} = \fro{\nabla f(X)-\nabla f(\mstar)} \leq \lip \fro{X-\mstar}$; and (b) the upper-bound~\eqref{eq:lemm41-wei2016}. 
Therefore, we have 
$$ \alpha_1 \geq (1-\beta - \frac{2\lip}{\sigmin} \fro{X-\mstar}) \fro{X-\mstar}^2 \geq 0$$
for any $X$ satisfying $\fro{X-\mstar} \leq \frac{1}{2} \cbd$. 
}
\end{itemize}

\end{proof}

\begin{proof}[{\bf Proof of \cref{lemm:f-rsc-riem}.}]
\label{parag:prf-rsc-riem}
We prove the inequality~\eqref{eq:rgrad-nmb} as follows. First, through \cref{prop:rgrad-total2quot}, the Riemannian gradient of $f$~\eqref{eq:def-f-lrqp} is 
\begin{equation}
    \rgrad = \opvx(\nabla f(X)) =  \opvx(\opa(X-\mstar)),
\end{equation}
for all $X\in\mrkk$. Therefore, we have 
\begin{align}
    &\braket{\rgrad[X], X-\xstar}=\braket{\opvx(\opa(X-\mstar)),X-\mstar}\notag\\
    & =\braket{\opa(X-\mstar),\opvx(X-\mstar)} \label{eq:prf5410-l1}\\
    & = 2\braket{\opa(X-\mstar),X-\mstar} + \braket{\opa(X-\mstar),(\opvx-2\id)(X-\mstar)}\notag\\%
    & = 2\braket{\opa(X-\mstar),X-\mstar} + 2\braket{\opa(X-\mstar),(\id-\proj_{\tansp[X]\mrkk})(\mstar)}\label{eq:prf5410-l3}\\
    &\geq 2\underbrace{\braket{\opa(X-\mstar),X-\mstar}}_{a_1} - 2\underbrace{\fro{\opa(X-\mstar)}}_{a_2}\underbrace{\fro{(\id-\proj_{\tansp[X]\mrkk})(\mstar)}}_{a_3},\label{eq:prf5410-ineq1}
\end{align}
where~\eqref{eq:prf5410-l1} holds since $\opvx$ is a symmetric operator,~\eqref{eq:prf5410-l3} is obtained by noticing that $(\opvx-2\id)(X-\mstar)=2(\id-\proj_{\tansp[X]\mrkk})(\mstar)$; see~\eqref{eq:opvx-iddiff} in \cref{lemm:opvx-props}. 
The terms $a_{1}$ and $a_2$ in~\eqref{eq:prf5410-ineq1} have the following bounds, 
\begin{align}
    a_1&:= \braket{\opa(X-\mstar),X-\mstar} \geq (1-\brip)\frobb[X-\mstar], \label{eq:ineq-a1-ub}\\
    a_2&:= \fro{\opa(X-\mstar)}  \leq \lip\frob[X-\mstar],\label{eq:ineq-a2-lb} 
\end{align}
where~\eqref{eq:ineq-a1-ub} is the result of
\Cref{prop-rpd2asp},
and~\eqref{eq:ineq-a2-lb} holds according to \cref{lemm:f-lip-muu} (recall that $\opa(X-\mstar)=\nabla f(X)=\nabla f(X)-\nabla f(\mstar)$). 

From~\eqref{eq:lemm41-wei2016} of \cref{lemm:wei2016-lemm4.1}, the term $a_3$ in~\eqref{eq:prf5410-ineq1} has the following bound, 
\begin{align}
    a_3:= \fro{(\id-\proj_{\tansp[X]\mrkk})(\mstar)} \leq \frac{1}{\sigmin}\frobb[X-\mstar],\label{eq:ineq-a3-lb} 
\end{align}
where $\sigmin:=\sigma_k(\mstar)$. Applying~\eqref{eq:ineq-a1-ub}--\eqref{eq:ineq-a3-lb} to~\eqref{eq:prf5410-ineq1}, we have
\begin{align}\label{eq:prf5410-ineq2}
    \braket{\rgrad[X], X-\xstar}&\geq 2\Big((1-\brip)- \frac{\lip}{\sigmin}\fro{X-\mstar}\Big)\frobb[X-\mstar].  
\end{align}
Note that the coefficient in the right-hand side of~\eqref{eq:prf5410-ineq2} is strictly positive if 
$X\in\mrkk$ satisfies $\fro{X-\mstar} \leq\dnm$, with $\dnm$ satisfying 
$0\leq\dnm < \cbd$. 
\end{proof}

\begin{proof}[{\bf Proof of \cref{lemm:4.8x}}] 
\label{prf:lemm-4.8x}
    First, note that $\braket{\nabla f(X), \grad f(X)} = \frac{1}{2} \fro{ \grad f(X) }^2$. Indeed, from Lemma x, we have $\opvx\circ \opvx(Z) = 2\opvx(Z)$. Consequently,  
    \begin{align*}
        \braket{g_X, 2\nabla f(X)} 
        & = \braket{g_X, (2I-\opvx + \opvx)(\nabla f(X))} 
        = \fro{g_X}^2 + \braket{g_X, (2I-\opvx)\nabla f(X)} \\
        & = \fro{g_X}^2 + \braket{(2I-\opvx)\circ\opvx(\nabla f(X)), \nabla f(X)} 
        = \fro{g_X}^2,
    \end{align*} 
    where the last line holds because $(2I-\opvx)\circ\opvx(Z) =0$ for any $Z$. 

(i) Consider a decomposition into two orthogonal components $\nabla f(X) := \proj_{\tansp[X]\mrkk}(\nabla f(X)) + P_{\perp}(\nabla f(X))$. Given that $\opvx(Z) = 2Z$ if $Z\in \tansp[X]\mrkk$ and $\opvx(Z) = 0$ if $Z \in (\tansp[X]\mrkk)^{\perp}$, we have 
    \begin{align*}
        \frac{1}{2}\fro{g_X}^2 
        & = \braket{\nabla f(X), g_X} = \braket{\nabla f(X), \opvx(\nabla f(X)) } = 2\braket{\nabla f(X), \proj_{\tansp[X]\mrkk}(\nabla f(X))} \\ 
        & = 2 \fro{\proj_{\tansp[X]\mrkk}(\nabla f(X))}^2. 
    \end{align*} 
    The set $C_{\delta,\bar{\sigma}}^\star=\nmb\cap\{X: \sigma_k(X)\geq \bar{\sigma}\}$ is a compact subset of $\nmb$. Moreover, through \Cref{lemm:f-rsc-riem}, all $X\in C_{\delta,\bar{\sigma}}$ different than $\mstar$ are non-stationary, \ie, $\fro{g_X}\neq 0$. Therefore, there exists a ratio $0<\cca \leq 1$ such that for all $X \in C_{\delta,\bar{\sigma}}^\star$,    
    \begin{align} \label{eq:gx-nablafx}
        \fro{g_X} = 2 \fro{\proj_{\tansp[X]\mrkk}(\nabla f(X))} \geq 2\cca \fro{\nabla f(X)}.
    \end{align}

(ii) When $\opa$ satisfies the RPD property, there exists $\lambda_2\in (0,1)$ such that 
$$\fro{\nabla f(X)}^2=\fro{\opa(X-\mstar)}^2 = \braket{X-\mstar, \opa^2(X-\mstar)} \geq \lambda_2 \fro{X-\mstar}^2$$ since the linear operator $\opa^2$ is symmetric and positive definite. This entails~\eqref{eq:lemm-4.8x} with $\tcca=\cca \sqrt{\lambda_2}$. 
 
For \eqref{eq:lemm-4.8yy}: the term $\Gamma_X= \nabla f(X) X^{\dagger} \nabla f(X)$ satisfies 
    \begin{align*} 
        \fro{ \Gamma_X } 
        & \leq \frac{1}{\sigma_{k}(X)} \fro{ \nabla f(X)}^2  
        \leq \frac{\lip}{\sigma_{\rkval}(X)} \fro{\nabla f(X)}\fro{X-\mstar} 
        \leq \frac{\lip}{2\cca\sigma_k(X)} \fro{g_X} \fro{X-\mstar}, 
    \end{align*}
    where the first inequality is obtained since the operator norm of $X^{\dagger}$ is $\frac{1}{\sigma_{\rkval}(X)}$, and the second and third inequalities are obtained from the 
    $\lip$-Lipschitz continuity of $\nabla f$ and \eqref{eq:gx-nablafx} respecitvely. 
    Then~\eqref{eq:lemm-4.8yy} follows given that $0 < \bar{\sigma}  \leq \sigma_k(X)$. 
\end{proof}

\begin{proof}[{\bf Proof of \Cref{lemm:4.8z}.}]
    \label{prf:lemm-4.8z}
    Throughout the proof, we use the short notation $g_X:= \grad f(X)$ for the Riemannian gradient~\eqref{eq:rgrad-as-pegrad}.  

(i) To verify~\eqref{eq:cond-1}: we observe that for a stepsize $0<\theta\leq
\frac{1}{\rlip}$ %
\begin{align} 
    - D(\theta)
    & =  \frac{\theta}{2} \Big( (2-\rlip\theta)\fro{g_X}^2 -
    2(1-\rlip\theta)\braket{g_X, \theta\Gamma_X} - \rlip\theta \fro{\theta\Gamma_X}^2 \Big) \nonumber \\ 
    & =  \frac{\theta}{2} \Big( 2\fro{g_X}^2 - (\rlip\fro{g_X}^2 +
    2\braket{g_X,\Gamma_X})\theta - (\rlip\theta \fro{\Gamma_X}^2 - 2\rlip \braket{g_X,\Gamma_X} )\theta^2 \Big) \nonumber \\ 
    & \geq \frac{\theta}{2} \big( \underbrace{ 2\fro{g_X}^2 - (\rlip\fro{g_X}^2 +
    2\braket{g_X,\Gamma_X})\theta - (\fro{\Gamma_X}^2 - 2\rlip
    \braket{g_X,\Gamma_X} )\theta^2 }_{:=\fro{g_X}\fro{\Gamma_X}\cdot P_1(\theta)}\big)
\label{eq:ineq-p1}
\end{align}
where the inequality holds due to $-\rlip\theta\fro{\theta\Gamma_X}^2 \geq
-\fro{\theta\Gamma_X}^2$ given that $0<\rlip\theta \leq 1$. 
In~\eqref{eq:ineq-p1}, $P_1(\theta)$ is a quadratic polynomial with the following expression: 
\begin{align} 
    P_1(\theta) & = \frac{1}{\fro{g_X}\fro{\Gamma_X}} \big(2\fro{g_X}^2 - (\rlip\fro{g_X}^2 + 2\braket{g_X,\Gamma_X})\theta - (\fro{\Gamma_X}^2 - 2\rlip \braket{g_X,\Gamma_X} )\theta^2\big) \nonumber\\ 
& = \frac{2}{\gamma_X} - \left(\frac{\rlip}{\gamma_X} + 2\cos(\alpha_X)\right)\theta - (\gamma_X-2\rlip \cos(\alpha_X))\theta^2, 
\label{eq:p1-exp}
\end{align}
where $\gamma_X:=\frac{\fro{\Gamma_X}}{\fro{g_X}} > 0$ and $\cos(\alpha_X):=
\frac{\braket{g_X,\Gamma_X}}{\fro{g_X}\fro{\Gamma_X}}$. 
From~\eqref{eq:ineq-p1}, we have $-D(\theta) \geq 0$ if $P_1(\theta)\geq 0$. 

Now, we show that $P_1(\theta)\geq 0$ for all $0<\theta\leq
\bar{\theta}:=\frac{1}{\rlip}$ if $P_1(\bar{\theta}) \geq 0$. First, notice that $P_1(0) =
\frac{2}{\gamma_X} > 0$. Then, it suffices to have $P_1(\bar{\theta})\geq 0$. This is true for all types of quadratic polynomials with $P_1(0)>0$; in 
the only unobvious case when this quadratic polynomial is decreasing then
increasing on $[0, +\infty)$, it suffices to verify that $\bar{\theta}$ is
smaller than the point $\theta^*$ where $P_1$ attains the minimum, \ie, it suffices to
verify that 
$$\bar{\theta}=\frac{1}{\rlip} \leq \theta^*=\frac{1}{2}\cdot\frac{\frac{\rlip}{\gamma_X} + 2
\cos(\alpha_X)}{2\rlip\cos(\alpha_X)-\gamma_X}.$$ %
The required inequality above is equivalent to
$\frac{\rlip}{\gamma_X} + 2 \frac{\gamma_X}{\rlip} \geq 2 \cos(\alpha_X)$, which is
always true because $\frac{\rlip}{\gamma_X} + 2 \frac{\gamma_X}{\rlip}  \geq 2
\sqrt{\frac{\rlip}{\gamma_X} \cdot 2 \frac{\gamma_X}{\rlip}} = 2\sqrt{2}$ while the
right hand-side is bounded by $2$. 

Note that the condition $P_1(\bar{\theta})\geq 0$ reads 
\begin{align*} 
P_1(\bar{\theta}) 
& =\frac{1}{\rlip}\left(\frac{2\rlip}{\gamma_X} - (\frac{\rlip}{\gamma_X} + 2\cos(\alpha_X))- (\frac{\gamma_X}{\rlip}-2\cos(\alpha_X))\right)  %
 = \frac{1}{\rlip}\left( \frac{\rlip}{\gamma_X} - \frac{\gamma_X}{\rlip} \right)\geq 0 
\end{align*}
where $\rlip>0$ and $\gamma_X>0$, 
which holds if and only if $X$ satisfies 
\begin{align}
\label{eq:p1-cond}
\gamma_X \leq \rlip, \text{~\ie,~} \fro{\Gamma_X} \leq \rlip \fro{g_X},
\end{align}
hence the sufficient condition for \eqref{eq:cond-1} with any stepsize $0<\theta\leq \frac{1}{\rlip}$. 

(ii) To prove~\eqref{eq:cond-rho}: We deduce an additional sufficient condition 
under the condition~\eqref{eq:p1-cond} obtained in (i). %

By multiplying both sides of the inequality above with $\rho\geq 1$ and adding $-\tilde{D}(\theta)$, we have %
\begin{align} 
    -\rho D(\theta)-\tilde{D}(\theta) & \geq \frac{\theta}{2}\Big( (2\rho-1)\fro{g_X}^2 - 2\braket{\Gamma_X,X-\mstar} - \big(\rho \rlip\fro{g_X}^2 + 2(\rho-1)\braket{g_X,\Gamma_X}\big)\theta  \nonumber \\ 
    & \quad\qquad -\big((\rho+1) \fro{\Gamma_X}^2 - 2\rho \rlip \braket{g_X,\Gamma_X} \big)\theta^2\Big) 
    ~=: \frac{\theta}{2} \fro{g_X} \fro{\Gamma_X} P_{\rho}(\theta), \label{eq:prho-def}
\end{align}
where %
the $P_{\rho}(\theta)$ can be expressed in $\gamma_X$ and $\cos(\alpha_X)$ as follows: 
\begin{align} 
P_{\rho}(\theta) 
& =  \frac{2\rho-1}{\gamma_X} - 2\tilde{I}_X - (\frac{\rho\rlip}{\gamma_X} + 2(\rho-1)\cos(\alpha_X))\theta - ((\rho+1)\gamma_X-2\rho\rlip \cos(\alpha_X))\theta^2, 
    \label{eq:prho-exp}
\end{align}
where 
$\tilde{I}_X:= \frac{\braket{\Gamma_X,X-\mstar}}{\fro{g_X}\fro{\Gamma_X}}$. 
For~\eqref{eq:prho-exp} to be positive, first let the zero-th order term be strictly positive, similar to $P_1(\theta)$, 
which requires $P_{\rho}(0)=\frac{2\rho-1-2\tilde{I}_X\gamma_X}{\gamma_X} >0$. 
Note that for any $X$ satisfying~\eqref{eq:p1-cond}, %
\ie, $X\in \{X\in\mrkk: \fro{\Gamma_X} \leq \rlip \fro{\grad f(X)}\}$, we have  
\begin{align} 
    \gamma_X P_{\rho}(0) + 1 & = 
    2\rho -2\tilde{I}_X\gamma_X  = 2\rho - \frac{2\braket{\Gamma_X, X-\mstar}}{\fro{g_X}^2} \nonumber \\
    & \geq 2\rho - \frac{2\fro{\Gamma_X} \fro{X-\mstar}}{\fro{g_X}^2}  
    \underset{\eqref{eq:p1-cond}}{\geq} 2\rho - 2\rlip  \frac{\fro{X-\mstar}}{\fro{g_X}}  \nonumber \\ %
    & \geq 2\rho - \frac{\rlip}{\tcca}, \label{eq:ix-2}  
\end{align} 
where~\eqref{eq:ix-2} %
is obtained from %
{\Cref{lemm:4.8x}} for a constant $0<\tcca \leq 1$. %
Therefore, we will have $P_{\rho}(0)\geq \frac{1}{\gamma_X}$ (same order of magnitude as $P_1(0)$) if %
$ 2\rho -\frac{\rlip}{\tcca} -1 \geq 1$, \ie, if   
\begin{align}
    \label{eq:rho-range}
    \rho \geq 1 + \frac{\rlip}{2\tcca} \asymp O(1+\rlip).
\end{align}
Subsequently, given $\rho$ as in~\eqref{eq:rho-range}, we use the same technique for (i) to show that  
$P_{\rho}(\theta) \geq 0$ for all $0 < \theta \leq \bar{\theta}_{\rho}:= (1-\frac{1}{\rho})\frac{1}{\rlip}$, if $P_{\rho} (\bar{\theta}_{\rho}) \geq 0$. 
Indeed, this can be verified for all types of quadratic polynomials with $P_{\rho}(0) >0$; in the only unobvious case when $P_{\rho}(\theta)$ is decreasing then increasing on $[0,\infty)$, it holds that for any $\rho >1$, $\bar{\theta}_{\rho} \leq \theta^*_{\rho}$, where $\theta^*_{\rho}$ is the minimizer of $P_{\rho}$. Hence, it always suffices to require $P_{\rho}(\bar{\theta}_{\rho}) \geq 0$. Note that we have 
\begin{align*}
    P_{\rho} (\bar{\theta}_{\rho}) = \frac{1}{\rlip} \left((\rho - 2\tilde{I}_X\gamma_X)\frac{\rlip}{\gamma_X} - \frac{(\rho-1)(\rho^2-1)}{\rho^2}\frac{\gamma_X}{\rlip}\right) \geq  
\frac{1}{\rlip} \left(\frac{\rlip}{\gamma_X} - \frac{(\rho-1)(\rho^2-1)}{\rho^2}\frac{\gamma_X}{\rlip}\right), 
\end{align*} 
which means 
$P_{\rho} (\bar{\theta}_{\rho}) \geq 0$ if $\frac{1}{\rlip} \left(\frac{\rlip}{\gamma_X} - \frac{(\rho-1)(\rho^2-1)}{\rho^2}\frac{\gamma_X}{\rlip}\right)\geq 0$, 
hence the sufficient condition 
\begin{align}
\gamma_X \leq \rlip \frac{\rho}{\sqrt{(\rho-1)(\rho^2-1)}}
    \label{eq:prho-cond}
\end{align} 
where $\rho$ is given in \eqref{eq:rho-range}. 
By combining~\eqref{eq:p1-cond} and \eqref{eq:prho-cond}, we have the sufficient condition~\eqref{eq:p-2cond} on $X$. 
\end{proof}

\begin{proof}[{\bf Proof of \Cref{coro:4.8z}.}]
    For any $X\in C_{\delta_0,\bar{\sigma}}$, the inequality~\eqref{eq:lemm-4.8yy} in \Cref{lemm:4.8x} entails that
$\frac{\fro{\Gamma_X}}{\fro{g_X}}\leq \frac{\lip}{2\cca\bar{\sigma}} \fro{X-\mstar}$, and therefore the condition \eqref{eq:p-2cond} can be satisfied if $\frac{\lip}{2\cca\bar{\sigma}} \fro{X-\mstar} \leq C_{\rho} \rlip$, \ie, if  
    $$ \fro{X-\mstar} \leq \frac{2\cca C_{\rho}\rlip}{\lip} \bar{\sigma}:= \ccb \delta_0$$
    where 
$\ccb= \frac{\rlip\bar{\sigma}}{\sigmin} \frac{2\cca C_{\rho}}{1-\brip}$. 
\end{proof}

\begin{proof}[{\bf Proof of \cref{lemm:f-rsc-fort}.}] 
We start by proving that $\{X_t\}_{t\geq 0}$ converges to $\mstar$ by checking  
initial conditions including the $\delta$-closeness conditions of \Cref{coro:4.8z}. 

First, we show that $\{X_t\}_{t\geq 0}$ by Algorithm 1 
is included in a compact subset of $\mrkk$, in view of the set $C_{\delta,\bar{\sigma}}^\star$ in \Cref{lemm:4.8z}--\Cref{coro:4.8z}. 
It suffices to ensure that the sequence $\{X_t\}$ by Algorithm 1 does not get arbitrarily close to any point in $\mleqk[(k-1)]$. Note that (i) for any $Y \in \mleqk[(k-1)]$, $\fro{Y-\mstar} \geq \sigma_k^\star$, and (ii) $\{X_t\}_{t\geq 0}$ is monotonically decreasing in $f$ values, hence it suffices for $X_0$ to satisfy 
\begin{align*} 
    f(X_0)-f(\mstar) \leq \min_{Y\in \mleqk[(k-1)]} \{f(Y) - f(\mstar)\}, 
\end{align*} 
which, under the $(\brip,2k)$-RPD property, can be satisfied if $(1+\brip)\fro{X_0-\mstar}^2 \leq (1-\brip) \min_{Y\in \mleqk[(k-1)]}\fro{Y-\mstar}^2$, \ie, if $\fro{X_0-\mstar}  \leq \delta_1:=\sqrt{\frac{1-\brip}{1+\brip}} \sigmin$. %
Therefore, under this $\delta_1$-closeness condition, 
$\{X_t\}_{t\geq 0}$ is closed, \ie, there exists $\bar{\sigma}>0$ such that $\sigma_k(X_t)\geq \bar{\sigma}$ for all $t\geq 0$. Consequently $\{f(X_t)\}_{t\geq 0}$ is closed given that $f$ is continuous. 
Then it follows that $\{f(X_t)\}_{t\geq 0}$ converges, since $\{f(X_{t}) \}_{t\geq 0}$  is monotonically decreasing by Algorithm~1. 
 
Furthermore, through the RPD property, $\{f(X_t)\}_{t\geq 0}$ being decreasing and convergent entails that 
(i) $\{X_t\}_{t\geq 0}$ converges and 
(ii) the sequence %
$\{\max_{s\geq t} \fro{X_s -\mstar}\}_{t\geq 0}$ is decreasing. 
Consequently, when $\fro{X_0-\mstar}\leq \min(\delta_1,\bar{\delta}_0)$ for $\bar{\delta}_0$ given in \Cref{coro:4.8z}, the limit point $X^*:=\lim_{t\geq 0} \{X_t\}$ is included in $C_{\bar{\delta}_0,\bar{\sigma}}^\star$~\eqref{def:set-c}. 
Finally, we show that the limit point $X^*$ is $\mstar$. %
Suppose that $X^* \in  C_{\bar{\delta}_0,\bar{\sigma}}^\star$ is not $\mstar$, then according to  
\Cref{coro:4.8z}, there exists a stepsize $0<\theta \leq (1-\frac{1}{\rho})\frac{1}{\rlip}$ such that Algorithm 1 admits a strict descent from $X^*$ in the $f$ value, %
hence a contradiction. 
Therefore, in conclusion, $\{X_t\}_{t\geq 0}$ converges to $\mstar$, given that $\fro{X_0-\mstar} \leq \bar{\delta}_1:=\min(\bar{\delta}_0, \delta_1)$, where $\bar{\delta}_0 \asymp \delta_0$ and $\delta_1 \asymp \delta_0$. 

Moreover, since $X_0$ satisfy the proximity conditions required in \Cref{coro:4.8z}, $(X_0, X_1)$ satisfies \eqref{ineq:tar}, which, combined with the RPD property, entails that  
$\fro{X_1-\mstar} \leq (1+\theta_0\Delta_{f,k}) \fro{X_0 - \mstar}$, where $-1<\Delta_{f,k}<2\rho -1$ is a constant depending on $(\brip,\rlip, \mu)$. Hence, $\fro{X_1-\mstar}\leq \bar{\delta}_0$ holds if $X_0$ satisfies $\fro{X_0 - \mstar} \leq \frac{1}{1+\Delta_{f,k}} \bar{\delta}_0 \asymp \bar{\delta}_0$. In conclusion, there exists a constant $\ccc \asymp 1$ such that, when $\fro{X_0-\mstar}\leq \min(\delta_0,\ccc\delta_0)$, the two descent conditions \eqref{eq:cond-1}--\eqref{eq:cond-rho} hold for all iterates, \ie, 
\begin{align}
    \label{ineq:tar-b} 
    R_{s+1} \leq \big(1 -\frac{1}{\rho}\big) R_s + \frac{1}{\rho} \big(\frac{1-\theta_0\muu}{2\theta_0} \fro{X_{s} - \mstar}^2  - \frac{1}{2\theta_0} \fro{X_{s+1} - \mstar}^2\big) \quad\forall ~s\geq 0,
\end{align}
where $\theta_0=(1-\frac{1}{\rho})\frac{1}{\rlip}$ is in the range of valid stepsizes given by \Cref{coro:4.8z}. 
Finally, for a strictly positive constant $\epsilon:=\min(\frac{1}{\rho}, \mu\bar{\theta}_0)<1$, we obtain the following inequality by summing both sides of \eqref{ineq:tar-b} times $(1-\epsilon)^{-s}$ for $s\in \{0,1,\dots, t\}$,  
$$
(1-\epsilon)^{-t} R_{t+1} \leq \Big(1-\frac{1}{\rho}\Big) R_0 + \frac{1}{\rho}\Big(\frac{1-\muu\theta}{2\theta} \fro{X_0 - \mstar}^2\Big), 
$$
where $(1-\frac{1}{\rho}) \leq  (1-\epsilon)$. 
Note that $R_0 = \braket{\opa(X_0-\mstar) , X_0 - \mstar} \leq (1+\brip) \fro{X_0-\mstar}^2$, hence we have  
$f(X_{t+1}) - f(\mstar) \leq (1-\epsilon)^{t+1}  C_{f,k}\fro{X_0 -\mstar}^2$  
for a constant $C_{f,k}>0$ depending only on $(\brip, \rlip, \muu)$. 
\end{proof}

\subsection{Proofs for~\Cref{ssec-ropt-mk:appl-mc}}

\begin{lemma} %
    \label{lemm:rinfn}
Suppose that $X,\mstar\in\mrkk$ are sufficiently incoherent: 
    $$\max(\sqrt{m}\|Y\|_{2,\infty}, \sqrt{n}\|\trs[Y]\|_{2,\infty}) \leq B \quad \text{for~} Y \in \{X, \mstar\},$$ and 
that $X$ satisfies $\fro{X-\mstar} \leq \epsilon_0 \sigma_k(\mstar)$, 
{and $\rinfnorm[X-\mstar]\leq \epsilon_0 \rinfnorm[\mstar]$} 
for $\epsilon_0>0$. %
Then the RGD update $\tilde{X} := X - \theta (\grad f(X) - \theta \Gamma_X)$ satisfies 
$$\max(\sqrt{m}\|\tilde{X}\|_{2,\infty}, \sqrt{n}\|\trs[{\tilde{X}}]\|_{2,\infty}) \leq (1+\crinf)B$$ %
where the parameter 
$\crinf :=  (1+\brk)\theta\epsilon_0 + \brk\theta^2\epsilon_0^2$ with $\brk\geq\sigma_{k}^{-1}(X)\sigmin>0$. 
\end{lemma}
\begin{remark}
    \normalfont 
    The upper bound in this lemma will be an over-estimation if the RGD update with stepsize $\theta$ ensures an effective decrease in the $(2,\infty)$-norm. However, we are content with this bound in order not to add complication (or interfere with) to the contraction results in the Frobenius norm in 
\Cref{ssec-ropt-mk:convana}. 
\end{remark}

\begin{proof}
    First, %
    the gradient term is an $m\times n$ matrix sufficiently incoherent: $\grad f(X)=\opvx(\nabla f(X))=\pu \po(Z) + \po(Z)\pu[V]$ where $Z:=X-\mstar$ and $(U,V)$ are the unitary matrices in the $\rkval$-SVD $X:=U\Sigma\trs[V]$. Hence for every $i$ we have:  
    \begin{align}
        \|\rowof{(\pu \po(Z))}{i}\|_2 
        & = \|\rowof{(U\Sigma)\Sigma^{-1}\trs[U]\po(Z))}{i}\|_2  \nonumber \\ 
        & \leq  \|\rowof{(U\Sigma)}{i} \|_2 \nop{\Sigma^{-1}\trs[U]\po(Z)}  
          = \|\rowof{X}{i}\|_2 \nop{X^\dagger\po(Z)}, \label{eq:pinvpo}  \\ 
        & \leq \sigma_{k}^{-1}(X)\fro{Z}\rinfnorm[X], 
           \label{eq:pupo-i2} 
    \end{align}
    where the equality in~\eqref{eq:pinvpo} is due to $\|\rowof{(U\Sigma)}{i}\|_2 = \|\rowof{X}{i}\|_2$ and $\|\Sigma^{-1}\trs[U]\po(Z) y\|_2 = \|V (\Sigma^{-1}\trs[U]\po(Z) y)\|_2$ for any $y\in\reals^n$ (since $\trs[V]V=I_{\rkval\times \rkval}$), 
    and inequality~\eqref{eq:pupo-i2} is obtained after $\nop{\po(Z)}\leq \fro{\po(Z)} 
    \leq\fro{Z}$. 
    On the other hand, it holds that 
    \begin{align}
           \label{eq:popv-i2} 
        \|\rowof{(\po(Z)\pu[V])}{i}\|_2 \leq \|\rowof{(\po(Z))}{i}\|_2\leq \|\rowof{Z}{i}\|_2,
    \end{align}
    where the first inequality is because $\pu[V]$ is an orthogonal projection onto a strict subspace of $\reals^n$ (with dimension $\rkval < n$). 
    By combining~\eqref{eq:pupo-i2} and~\eqref{eq:popv-i2}, %
    along with the two $\epsilon_0$-proximity bounds of $X$ (in terms of $Z=X-\mstar$), we have $\|\rowof{(\grad f(X))}{i} \|_2 \leq {(1+\brk)\epsilon_0\frac{B}{\sqrt{m}}}$. %

    Second, by applying the same techniques, we have 
    \begin{align} 
        \label{eq:rinfn-gamma}
        \|\rowof{(\Gamma_X)}{i}\|_2 
        \leq \|\rowof{(\po(Z))}{i} \|_{2} \nop{X^{\dagger}\po(Z)} \leq \|\rowof{Z}{i}\|_2 \nop{X^\dagger\po(Z)} 
        \leq \|\rowof{Z}{i}\|_2  (\sigma_k^{-1}(X)\fro{Z}). 
    \end{align}
    Given the $\epsilon_0$-proximity bounds,~\eqref{eq:rinfn-gamma} entails that 
        $\|\rowof{(\Gamma_X)}{i}\|_2  \leq 
                \rinfnorm[X-\mstar]  (\sigma_k^{-1}(X)\fro{X-\mstar}) \leq 
                \brk\epsilon_0^2 \frac{B}{\sqrt{m}}$. 
 
    Finally, we have 
    $\sqrt{m}\|\rowof{\tilde{X}}{i}\|_2 \leq \Big(1+ \theta (1+\brk)\epsilon_0 + \theta^2\brk\epsilon_0^2\Big) B$ for all $i$. 
    Hence the parameter $\crinf= \theta (1+\brk)\epsilon_0 + \theta^2\brk\epsilon_0^2$. 
    The same conclusion applies to $\sqrt{n}\rinfnorm[{\trs[{\tilde{X}}]}]$ by the transpose operation and exchange of roles of $U$ and $V$, noticing that $\po$ (and $\opa$ in general) is a symmetric operator.  
\end{proof}

\begin{proof}[{\bf Proof of \Cref{prop:mc-rip}.}]
    We show that the assumptions of this propositon can be converted into
    the assumpptions of \cite[Theorem 3.1, Claim 3.1, Lemma~3.1]{Sun2016a},
    based on the fact that they all describe an incoherence neighborhood of $\mstar$; the 
    only difference is in the matrix representations: in this proposition, we use
    the $m\times n$ matrix representation, while \cite[Claim~3.1]{Sun2016a} uses the matrix factorization representation $(G,H)$ such
    that $G\trs[H] = X\in\reals^{m\times n}$, and the incoherence
    neighborhood is $ K_1 \cap
    K_2 \cap K_{\delta}$ (see~\cite[eq.(30)]{Sun2016a}), where
    $K_{\delta}:=\{(G,H)| \fro{G\trs[H] - \mstar} \leq \delta\}$, and $K_1$ and
    $K_2$ are sets of $(G,H)$ with bounded max row-norms and Frobenius norms respectively.
     
    \bleu{For any $X\in \nmg[{C_B, \mu}]$,} let $(G,H)\in\prodsp$ be a
    pair of matrices satisfying $G\trs[H]=X$ and $\fro{G}=\fro{H}$. Then, it
    follows that (i) $(G,H)\in K_1$,
    because $G$ and $H$ are balanced and share the same set of left and right singular vectors
    with $X$, %
    respectively, hence they are incoherent: $\rinfnorm[G] \leq
    \sqrt{2\cst\sigmax\rkval} \sqrt{\frac{\mu\rkval}{m}}$ and $\rinfnorm[H] \leq
    \sqrt{2\cst\sigmax\rkval} \sqrt{\frac{\mu\rkval}{n}}$; 
    (ii) $(G,H)\in K_2$ (with an uniform bound ``$\beta_T$'' on Frobenius norms) holds with the given choice $\fro{G}=\fro{H}$, provided
    that the bound $\beta_T$ fits $\max_{X\in\nmb[{\delta}]} \fro{X}$; and (iii) $(G,H)\in
    K_{\delta}$, simply because $G\trs[H]=X$ and $\fro{X-\mstar} \leq \delta$. 
    Hence the assumptions of \cite[Lemma
    3.1]{Sun2016a} hold, therefore the conclusion.  
\end{proof}

\begin{proof}[{\bf Proof of \Cref{lemm:mc-proj}.}]
~By construction, $\dproj(\tilde{X})$ has the same incoherence bound~\eqref{eq:sameinc} as a result of the normalization operator $\dproj$ at $\tilde{X}$. We show the non-expansiveness~\eqref{eq:nonexpa} as follows. 

The following property \cite{tong2021accelerating,Zheng2016} %
is used: %
\begin{property}
    \label{claim:lemm19}
For vectors $u, u^{\star}\in\reals^n$ and $\lambda \geq \frac{ \| u^\star \|_2 }{ \| u\|_2 }$, it holds that $\| (1 \wedge \lambda) u - u^\star \|_2 \leq  \| u - u^\star \|_2$. 
\end{property}
In view of \Cref{claim:lemm19}, it holds that, %
for $D^{(1)}_{ii} = \min(1, \lambda_i)$ and $\lambda_i:=\frac{B}{\sqrt{m}\|\rowof{\tilde{X}}{i}\|_2}$, 
\begin{align}
    \label{eq:lemm-proj-1}
\|\rowof{(\dproj(\tilde{X}) - \mstar)}{i}\|_2 = \| D^{(1)}_{ii} \rowof{(\tilde{X} D^{(2)})}{i} - \rowof{\mstar}{i} \|_2 \leq \|\rowof{(\tilde{X} D^{(2)} - \mstar)}{i}\|_2
\end{align}
as long as $\lambda_i=\frac{B}{\sqrt{m}\|\rowof{\tilde{X}}{i}\|_2} \geq \frac{\|\rowof{\mstar}{i}\|_2}{\| \rowof{( \tilde{X}D^{(2)})}{i}\|_2}$, 
which can be satisified  
when $B\geq (1+\crinf)\sigmax\sqrt{\mu\rkval}$. More precisely, we have 
\begin{enumerate}
    \item $\|\rowof{\mstar}{i} \|_2 \leq \|\rowof{U^\star}{i} \Sigma^\star\|_2 \leq 
        \sigmax\rinfnorm[U^\star]  \leq \sigmax \sqrt{\frac{\mu \rkval}{m}}$. 
    \item The ratio $\frac{\|\rowof{ \tilde{X} }{i}\|_2}{\|\rowof{(\tilde{X}D^{(2)})}{i}\|_2}\leq 1+\crinf$. %
       This is because: for every $j$, $(\tilde{X} D^{(2)})_{ij} = D^{(2)}_{jj} \tilde{X}_{ij}$ where $D^{(2)}_{jj} = \min(1, \frac{B}{\sqrt{n}\|\colof{\tilde{X}}{j}\|_2})$ is lower-bounded %
since $\|\colof{ \tilde{X} }{j}\|_2$ is also small enough. 
Indeed, through {\Cref{lemm:rinfn}} we have 
        $\sqrt{n}\|\colof{\tilde{X}}{j} \|_2 \leq (1+\crinf) B$ for $\crinf$ given in the statement. 
        Hence for all $j$, $ \frac{1}{1+\crinf} \leq D^{(2)}_{jj} \leq 1$, and $\frac{1}{1+\crinf}  |\tilde{X}_{ij}| \leq |(\tilde{X}D^{(2)})_{ij}| \leq  |\tilde{X}_{ij}|$. 
\end{enumerate}
As a consequence of~\eqref{eq:lemm-proj-1} for all $i$, we have %
$\fro{P_B(\tilde{X})  - \mstar} \leq \fro{\tilde{X}D^{(2)}-\mstar}$. 

Finally, using the same argument, we have further that  
$ \fro{ \tilde{X}D^{(2)}  - \mstar} \leq  \fro{\tilde{X}-\mstar}$. 
This is because: for every $j$, it holds that $\|  \rowof{( \tilde{X}D^{(2)} - \mstar)}{j}   \|_2 = \| D^{(2)}_{jj} \colof{\tilde{X}}{j} - \colof{\mstar}{j} \|_2\leq \| \colof{\tilde{X}}{j} -\colof{\mstar}{j}\|_2$ 
for $D^{(2)}_{jj} = \min(1, \frac{B}{\sqrt{n}\|\colof{\tilde{X}}{j}\|_2})$, 
if $\lambda_j:= \frac{B}{\sqrt{n}\|\colof{\tilde{X}}{j}\|_2}  \geq \frac{\|\colof{\mstar}{j}\|_2}{\|\colof{\tilde{X}}{j}\|_2}$, \ie, if $\frac{B}{\sqrt{n}}\geq \|\colof{\mstar}{j}\|_2$, which is true since $B \geq \sqrt{n}\|\trs[\mstar] \|_{2,\infty} \geq \sqrt{n}\|\colof{\mstar}{j} \|_2$ (for the same reason as point 1. above). 
\end{proof} 

\begin{proof}[{\bf Proof of \Cref{thm:main-mc}.}]
    The linear convergence to $\mstar$ is a result of \Cref{lemm:f-rsc-fort}, given \Cref{prop:mc-rip} and \Cref{lemm:mc-proj}.  \Cref{lemm:mc-proj} confirms that the normalized RGD rule preserves the incoherence bound of $X_t$ and is also non-expansive, which 
ensure that $\{X_t\}_{t\geq 0} \subset \nmg[{C_B,\mu}]$~\eqref{def:nm-sl15}.
Hence the RPD property~\eqref{eq:mc-rip} holds for all $X_t$. 
\end{proof}


\end{document}